\newcommand{\C}{{\mathbb C}}
\newcommand{\R}{{\mathbb R}}
\newcommand{\sign}{\textrm{sign}}
\newcommand{\Po}{{\mathcal P}}
\numberwithin{equation}{section}
\theoremstyle{plain}
\newtheorem{theorem}[equation]{Theorem}
\newtheorem{lemma}[equation]{Lemma}
\newtheorem{proposition}[equation]{Proposition}
\newtheorem{corollary}[equation]{Corollary}
\newtheorem{problem}[equation]{Problem}
\newtheorem*{definition*}{Definition}
\newtheorem*{theoremA}{Theorem A}
\newtheorem*{theoremB}{Theorem B}
\newtheorem*{theoremC}{Theorem C}
\newtheorem*{theoremD}{Theorem D}
\theoremstyle{definition}
\newtheorem{remark}[equation]{Remark}
\newtheorem{definition}[equation]{Definition}
\begin{document}
\title
[Algebraic curves and operator tuples]
{Spectral algebraic curves  and decomposable operator tuples}
\author[M. I. Stessin]{M. I. Stessin}
\address{Department of Mathematics and Statistics \\
University at Albany, SUNY \\
Albany, NY 12222, USA}
\email{mstessin@albany.edu}

\author[A. B. Tchernev]{A. B. Tchernev}
\address{Department of Mathematics and Statistics \\
University at Albany, SUNY \\
Albany, NY 12222, USA}
\email{atchernev@albany.edu}

\keywords{Joint spectrum, algebraic curves}
\subjclass[2010]{Primary: 47A25, 14P15, 14J70; Secondary: 47A80,47A20}

\begin{abstract}
Joint spectra of tuples of operators are subsets in complex projective space.
The corresponding tuple of operators can be viewed as an infinite
dimensional analog of a determinantal representation of the
joint spectrum.
We investigate the relationship between the geometry of the spectrum
and the properties of the operators in the tuple
when these operators are self-adjoint. In the case when the spectrum
contains an algebraic curve passing through an isolated spectral point of
one of the operators we give necessary and sufficient
geometric conditions for the
operators in the tuple to have a common reducing subspace.
We also address spectral continuity and obtain a norm estimate for the
commutant of a pair of self-adjoint matrices in terms of the Hausdorff
distance of their joint spectrum to a family of lines.
\end{abstract}

\maketitle
\section{Introduction}

When $A_1,\dots,A_n$ are $N\times N$ complex matrices,
the determinant
\begin{equation}\label{determinant}
{\mathcal S}(x_1,\dots,x_n)=\mbox{det}(x_1A_1+\dots+x_nA_n)
\end{equation}
is a homogeneous polynomial of degree $N$ in the variables
$x_1,\dots,x_n$, and zeros of this polynomial
determine a hypersurface in the projective
space $\C{\mathbb P}^{n-1}$. Conversely, given a
hypersurface $\Gamma$ of degree $N$
in $\C{\mathbb P}^{n-1}$,
if there are $N\times N$ matrices $A_1,\dots,A_n$ such that
$$
\Gamma=\{ \det(x_1A_1+ \dots +x_nA_n)=0 \}
$$
then the tuple $(A_1,\dots, A_n)$
is called a determinantal representation of $\Gamma$.
Of course, if a determinantal
representation exists, then for every pair of
invertible $N\times N$ matrices ${\mathcal M}$ and
${\mathcal N}$, the tuple of matrices
$
({\mathcal M}A_1{\mathcal N},\dots,
{\mathcal M}A_n{\mathcal N})
$
determines the same projective hypersurface, and these two
determinantal representations are called
equivalent.

A classical avenue of research in algebraic geometry
with a long history, see e.g. \cite{Ca, D, Do, KV, V1, R},
is determining
when a given hypersurface admits
a determinantal representation, and classifying all
such representations. If the hypersurface is real
(that is, the corresponding polynomial has real
coefficients), when is there a determinantal
representation with self-adjoint matrices? 
When is there  a determinantal representation with
triangular, or decomposable matrices, etc.?
We would like to mention specifically self-adjoint
determinantal representations of real curves, and
decomposable representations of reducible curves, since they
are close to the subject of this paper. The former produce
hyperbolic polynomials and are important in relation to the
Lax conjecture, cf. \cite{L,LPR,HMV}. The latter have
special meaning in operator theory. These representations
were considered in \cite {KV}.

The point of view from operator theory leads us to a second
natural avenue of research, that seems to have attracted
less attention in algebraic geometry: given that a
hypersurface has a determinantal
representation (or self-adjoint representation), what
does the geometry of the hypersurface
say about mutual relationships between the matrices
$A_1,\dots,A_n$?
In this direction
we would like to mention the result
of Motzkin and Taussky \cite{MT}, which states that 
a real curve in $\C{\mathbb P}^2$ with a self-adjoint
determinantal representation satisfies the condition:
the matrices of the corresponding tuple
commute  if and only if this
curve is a union of projective lines (in \cite{MT} the
result is stated in equivalent but different terms).

In 2009 R. Yang \cite{Y1} started an investigation of what
can be called infinite dimensional determinantal
representations. Since well-known definitions of
spectra of a tuple of operators such as Taylor spectrum,
cf. \cite{Ta,EP}, are defined for commuting tuples, Yang was
looking for a good definition of joint spectrum  for
non-commuting operators  and introduced the notion of
joint spectrum of a tuple $(A_1,\dots, A_n)$
of operators acting on a Hilbert space $H$:

\begin{definition*}
The \emph{joint spectrum} $\sigma(A_1,\dots,A_n)$
of $A_1,\dots,A_n$ consists of all
$(x_1,\dots,x_n)\in \C^n$ such that
the operator $x_1A_1+ \dots +x_nA_n$ is not invertible on $H$.
If $A_n=I$, the identity operator, the
\emph{proper part} of the
joint spectrum of $A_1,\dots,A_{n-1}$ is defined as
$
\sigma_p(A_1,\dots,A_{n-1})=
\sigma(A_1,\dots,A_{n-1},I) \cap \{x_n = -1\}
$
\end{definition*}

Joint spectra were further investigated in
\cite{CY,BCY,SYZ}. It is easily seen that if
$(x_1,\dots,x_n)\in \sigma (A_1,\dots,A_n)$, then
the whole complex line $\{ (cx_1,\dots,cx_n): \ c\in \C \}$
lies in $\sigma (A_1,\dots,A_n)$, and, therefore,
$\sigma(A_1,\dots,A_n)$  determines a set in $\C{\mathbb P}^{n-1}$.
By analogy with the finite dimensional case, given a set
$\Gamma$ in $\C{\mathbb P}^{n-1}$, if there are operators
$A_1,\dots,A_n$ acting on a Hilbert space $H$ such that
\[
\Gamma=
\{ [x_1: \ \ldots \ : x_n]\in \C{\mathbb P}^{n-1} \mid
x_1A_1+\dots+x_nA_n \ \text{is not invertible} \},
\]
then it is natural to call the tuple $(A_1,\dots,A_n)$ a
\emph{spectral representation} of $\Gamma$.
The main difference compared to the classical matrix
case is that this set is not necessarily an analytic set.
For example, if $A_1$ and $A_2$ are compact and of
infinite rank, and $A_3$ is invertible, the whole line
$\{ [x_1 : x_2 : 0]\}$ in $\C{\mathbb P}^2$ is contained in the
joint spectrum and the spectrum is not an analytic set
near each point of this line. It was shown in  \cite{SYZ}
that if $A_1,\dots,A_{n-1}$ are compact and  $A_n$ is invertible
(and, therefore, can be  considered to be identity)
the part of the joint spectrum that lies in the chart
$\{ x_n\neq 0\}$ is an analytic set. When the operators
$A_1,\dots,A_{n-1}$ are trace class, that part of the joint
spectrum is given by the equation
\[
S(x_1,...,x_{n-1})=\det(x_1A_1+...+x_{n-1}A_{n-1}-I)=0,
\]
and we obtain that in this case the spectral representation
is a ``true'' determinantal representation. In particular,
when all the operators are of finite rank, the joint
spectrum is a classical determinantal hypersurface in
$\C{\mathbb P}^{n-1}$. Of course, for infinite rank operators
the analyticity holds only on an open subset of
$\C{\mathbb P}^{n-1}$ and that moves the problem of
describing properties of the joint spectrum from the area
of  projective geometry to analytic geometry.

We are unaware of any results addressing the question
of when a spectral representation exists.
We will see in the next section that if the operators
$A_1,\dots,A_{n-1}$ are compact, then the proper part of the
spectrum
$
\sigma_p(A_1,...,A_{n-1})
$
is a globally determined analytic set of codimension one in
every compact subset of the chart $\{ x_n\neq 0\}$.
It would be interesting to know for which analytic sets
that are zeros of entire functions   spectral
representations exist. We will also show that if $(x_0,y_0)$
is a point in the joint spectrum of $A_1$ and $A_2$ such
that $x_0A_1+y_0A_2$ is normal and $1$ is an isolated
spectral point of $x_0A_1+y_0A_2$ of finite multiplicity,
then the joint spectrum is an analytic set in a
neighborhood of $(x_0,y_0)$.

A main goal of our paper is to address the relationship
between the geometry of the spectrum
and the mutual behavior of the operators in the
pencil. There is a recent result \cite{CSZ}
which is a generalization of the Motzkin-Taussky theorem
mentioned above to the infinite dimensional case. This
result states that a tuple $(A_1,\dots,A_n)$ of self-adjoint
compact operators acting on a separable Hilbert space
commute pairwise if and only if their proper
joint spectrum $\sigma_p(A_1,\dots,A_n)$ is
a locally finite union of affine hyperplanes (of course,
local finiteness is not a condition but just a property
coming from compactness).

Clearly, if the operators $A_1,\dots,A_n$ have a common
eigenvector with corresponding eigenvalues
$\lambda_1,\dots,\lambda_n$, then the proper joint spectrum
$\sigma_p(A_1,\dots,A_n)$ contains a hyperplane
$\{\lambda_1x_1+ \dots +\lambda_n x_n=1\}$. More generally,
if these operators have a common invariant subspace  $L$
of dimension $k$ (so the corresponding tuple is decomposable
with one block having dimension $k$), then the proper
spectrum contains an algebraic hypersurface of degree $k$
given by
\[
\det\bigl(x_1 (A_1|_L)+ \dots + x_n (A_n|_L)\bigr)=0.
\]
It is natural to ask whether the converse is true: if the
proper spectrum of self-adjoint operators $A_1,\dots,A_n$
contains an algebraic hypersurface of degree $k$, does
this imply that those operators have a common invariant
$k$-dimensional subspace, so that the tuple is decomposable?
In general the answer is negative as shown by the following
simple example.  Consider
$$
\Gamma =\{(x,y)\in \C^2: \ (x+y-1)(5xy+5 y^2-15y-10x+2)=0\}
$$
and let
\begin{equation}\label{matrices}
A_1=
\left[
\begin{array}{ccc}
1 & 0 & 0 \\
0 & 5 & 0 \\
0 & 0 & 0
\end{array}
\right],
\ A_2 =
\left[
\begin{array}{ccc}
1 & 2 & 1 \\
2 & 7 & 1 \\
1 & 1 & 1/2
\end{array}
\right].
\end{equation}
It is easy to check that $\Gamma$ is the set of zeros of
the determinant of the monic pencil $xA_1+yA_2-I$, and,
therefore, is the proper joint spectrum of $A_1$ and $A_2$.
Note that $\Gamma$ has two irreducible
components: the line $\{ x+y=1\}$ and the quadratic
$\{5xy+5y^2-15y-10x+2=0\}$; but $A_1$ and $A_2$ have neither
a common eigenvector nor a common two-dimensional invariant
subspace (in fact  $\Gamma$ has no decomposable
determinantal representation as it can also be shown using
one of the results in \cite{KV}).

Our first main task is to give necessary and
sufficient \emph{geometric} conditions for the presence of
an algebraic curve in the proper joint spectrum to indicate
decomposability, that is the existence of a common
invariant subspace of dimension equal to the degree
of the curve. Our consideration will be concentrated on a
local analysis near an isolated spectral point of one of
the operators in the tuple. In particular, when one of the
opeartors in the tuple is self-adjoint and is a sum of
compact and real multiple of the identity, the spectrum of
such an operator has at most one accumulation poin and our
consideration is valid near every spectral point of this
operator excluding the accumulation one.

We also address the issue of spectral continuity, that is,
if two curves are close in a neighborhood of a point,
both have self-adjoint spectral representation of which
one is decomposable, how far from being decomposable is
the other? The specific question we are considering is:
given that the proper joint spectrum is  close to a line
 in a neighborhood of one of its points,
does this mean that the operators have a common
``almost eigenvector'' (common ``almost invariant''
subspace)? Results in Sections~\ref{S:spectral-continuity}
and \ref{S:norm-estimates} present conditions
that guarantee that this is true, and give some norm 
estimates.

Let us now describe in more detail the main results of 
the paper.
The first important case of the problem when an algebraic
curve in the proper joint spectrum is associated with a
common invariant subspace is the case of a spectral line.
This case turns out to be crucial for higher order spectral
curves. The following result is proved in
Section~\ref{S:line-in-spectrum} (here, as well as in the rest
of the paper, 
we denote by $\Delta_\rho(x)$ the polydisk of radius $\rho$
centered at $x\in\C^n$).

\begin{theoremA}
Let $A_1,\dots,A_n$ be self-adjoint, $\lambda \neq 0$ be an
isolated point of $\sigma(A_1)$, and there exists $\rho>0$
such that, up to multiplicity,
\small{
\[
\begin{aligned}
\Delta_\rho( 1/\lambda ,0,\dots,0) & \cap
\{
\lambda x_1+a_2x_2+ \dots +a_nx_n=1
\}
\\
=
\Delta_\rho( 1/\lambda ,0,\dots,0) & \cap
\sigma_p(A_1,\dots, A_n).
\end{aligned}
\]
}
\negthickspace
The following are equivalent:
\begin{itemize}
\item[(1)]
The eigensubspace of $A_1$ corresponding to eigenvalue
$\lambda$ is an eigensubspace for each of the operators
$A_2,\dots, A_n$;

\item[(2)]
There exist an $\epsilon \in \R, \ \epsilon \neq 1$, and
$\rho^{\prime}>0$ such that $A_1(\epsilon,\lambda)$ is
invertible and, up to multiplicity,
\small{
\[
\begin{aligned}
\Delta_{\rho^\prime}(\lambda,0,\dots, 0) & \cap
\{
(1/\lambda)x_1+a_2x_2+ \dots +a_nx_n=1
\}
\\
=
\Delta_{\rho^\prime}(\lambda,0,\dots, ,0) & \cap
\sigma_p\bigl(
 A_1(\epsilon,\lambda)^{-1},
 A_2(\epsilon,a_2),\dots,A_n(\epsilon,a_n)
\bigr),
\end{aligned}
\]
}
\negthickspace
where
$A(\epsilon, b)=(1+\epsilon )A -b\epsilon I.$ 
\end{itemize}
\end{theoremA}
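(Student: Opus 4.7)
The plan is to treat the two implications asymmetrically: $(1) \Rightarrow (2)$ reduces to a direct block-decomposition argument, while $(2) \Rightarrow (1)$ requires a two-step perturbation analysis of the Riesz projection of the pencil $T(x) = x_1 A_1 + \dots + x_n A_n - I$ at its isolated kernel at $(1/\lambda, 0, \dots, 0)$. The essential auxiliary data in both directions are the orthogonal splitting $H = L \oplus L^\perp$ (with $L$ the eigenspace of $A_1$ for $\lambda$), the off-diagonal blocks $C_j = P_L^\perp A_j P_L$, and the reduced resolvent $K = (A_1 - \lambda I)^{-1}|_{L^\perp}$, which is bounded self-adjoint because $\lambda$ is isolated in $\sigma(A_1)$.

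For $(1) \Rightarrow (2)$, the splitting $H = L \oplus L^\perp$ reduces every $A_j$ by self-adjointness, so on $L$ one has $A_1(\epsilon, \lambda)|_L = \lambda I_L$ and $A_j(\epsilon, a_j)|_L = a_j I_L$, which alone produces on $L$ the required hyperplane in the joint spectrum. I would choose $\epsilon \in \R \setminus \{\pm 1\}$ with $\lambda \epsilon/(1+\epsilon) \notin \sigma(A_1|_{L^\perp})$ (possible since this set is closed and does not contain $\lambda$); then $A_1(\epsilon, \lambda)|_{L^\perp}$ is invertible, and the identity $\lambda I - A_1(\epsilon, \lambda)|_{L^\perp} = (1+\epsilon)(\lambda I - A_1|_{L^\perp})$ together with $\lambda \notin \sigma(A_1|_{L^\perp})$ shows that $\lambda A_1(\epsilon, \lambda)^{-1}|_{L^\perp} - I_{L^\perp}$ is invertible at $(\lambda, 0, \dots, 0)$, hence throughout a polydisk neighborhood. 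So $L^\perp$ contributes nothing to the joint spectrum near $(\lambda, 0, \dots, 0)$, and (2) follows.

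For $(2) \Rightarrow (1)$, let $P(x)$ denote the Riesz projection of $T(x)$ for spectrum in a small disk around $0$; this is holomorphic of constant rank $\dim L$ near $(1/\lambda, 0, \dots, 0)$, with $P(1/\lambda, 0, \dots, 0) = P_L$. Order-of-vanishing considerations applied to $\det T|_{\mathrm{Ran}(P(x))}$ force all near-$0$ eigenvalues of $T(x)$ to vanish identically on the hyperplane, so self-adjointness on the real hyperplane gives $T|_{\mathrm{Ran}(P(x))} = 0$ there; the operator-valued identity theorem extends this vanishing to the full complex hyperplane. Parametrizing the hyperplane by $\bar x = (x_2, \dots, x_n)$ with $x_1(\bar x) = (1 - \sum a_j x_j)/\lambda$ and setting $e(\bar x) = P(x_1(\bar x), \bar x) v$ for each $v \in L$, one has $T(x_1(\bar x), \bar x) e(\bar x) = 0$. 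Differentiating once in $x_j$ at $\bar x = 0$ and projecting onto $L$ forces $P_L A_j v = a_j v$, i.e., $P_L A_j P_L = a_j P_L$; differentiating twice, using $\partial_j e(0) = -\lambda K C_j v$ coming from Kato's formula, and again projecting onto $L$ yields after cancellations the quadratic relation $C_j^* K C_j = 0$.

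The same analysis applied to the transformed tuple produces an analogous relation. A direct computation starting from $A_1|_{L^\perp} = K^{-1} + \lambda I_{L^\perp}$ gives $(B_1 - (1/\lambda) I)^{-1}|_{L^\perp} = -\lambda I_{L^\perp} - \frac{\lambda^2}{1+\epsilon} K$ with $B_1 = A_1(\epsilon, \lambda)^{-1}$, while the $j$-th off-diagonal block for the transformed tuple equals $(1+\epsilon) C_j$. Substituting, the new quadratic relation reads $-\lambda C_j^* C_j - \frac{\lambda^2}{1+\epsilon} C_j^* K C_j = 0$; combined with $C_j^* K C_j = 0$ this forces $C_j^* C_j = 0$, hence $C_j = 0$. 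So each $A_j$ preserves $L$, and the first-order relation then gives $A_j|_L = a_j I_L$, which is (1). The main obstacle will be the second-order perturbation computation that isolates $C_j^* K C_j$: one must correctly combine the contributions from the variation of $T$ with those from $\partial_{x_j} P$ and verify the cancellations leaving only the desired quadratic term. A secondary delicacy is the passage from the real to the complex hyperplane via an operator-valued identity theorem, which needs care when $\dim L$ is infinite.
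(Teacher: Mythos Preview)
Your argument is correct and follows the same underlying strategy as the paper: from each hyperplane hypothesis extract the first-order identity $P_L A_j P_L = a_j P_L$ and the second-order identity $C_j^{*} K C_j = 0$ (with $K$ the relevant reduced resolvent), then combine the two second-order identities to force $C_j^{*} C_j = 0$, hence $C_j=0$. The difference is one of packaging. The paper obtains these relations via the residue calculus of the operator-valued functions $\Psi_m$ set up in Section~\ref{S:conditions-algebraic-curve}, using the spectral integral $T=\int_{\sigma(A_1)\setminus\{\lambda\}}\lambda(z-\lambda)^{-1}\,dE(z)$ to get $P_1 A_2 T A_2 P_1=0$ and then adding the analogous identity for $T(A_1^{-1})$; you instead differentiate the Riesz-projected equation $T(x)P(x)v=0$ directly along the hyperplane in Kato fashion. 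These are equivalent second-order perturbation computations. The paper also layers the argument as a reduction to the two-operator invertible case (Lemma~\ref{invertible1}) followed by a reduction to pairs $(A_1,A_j)$, whereas you treat the perturbed $n$-tuple in one pass; your explicit formula for the reduced resolvent of $A_1(\epsilon,\lambda)^{-1}$ on $L^\perp$ neatly absorbs the perturbation step that the paper isolates in Remark~\ref{perturb-by-identity}. Your route is somewhat more direct for this particular statement, while the paper's $\Psi_m$ framework has the advantage of being already set up for algebraic curves of arbitrary degree used elsewhere.
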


The most important case here
is the one of two operators. 
Theorem~B below is obtained from this case
by passing to tensor powers of operators and considering
their action on the exterior power of the corresponding
Hilbert space. Here for an operator $A$ acting on a Hilbert
space $H$ we write $\wedge ^n A$ to indicate that we
consider the action of $\otimes ^n A$ on $\wedge ^n H$.
We say that a self-adjoint operator $A$ on a separable
Hilbert space $H$ belongs to the class $\mathcal E(H)$ if
$A=K + aI$ for some compact self-adjoint operator $K$ and
some $a\in \R$. It is shown in
Section~\ref{S:exterior-powers}
that for operators $A$ and $B$ in $\mathcal E(H)$ one can
always use an appropriate change of coordinates to
reduce the search for common invariant subspaces
to the ``general position'' setting considered in
our next main result:

\begin{theoremB}
Let $A=K_1+ aI$ and $B=K_2+bI$ be self-adjoint operators
in the class $\mathcal E(H)$, with $A$ invertible.
Let $\Gamma$ be an algebraic curve of degree $k$ which is a
union of components 
of the proper joint spectrum $\sigma_p(A,B)$,
and which does not have the line $\{ax + by = 1 \}$ as a
reduced component.
Suppose that the $x$-axis (resp. the $y$-axis) intersects
$\Gamma$ in the $k$ points (counted with multiplicity)
$1/\lambda_1,\dots, 1/\lambda_k$
(resp. $1/\mu_1,\dots,1/\mu_k$)
such that each point $(1/\lambda_i,0)$ belongs only to
components of $\sigma_p(A,B)$ contained in $\Gamma$,
Set $\lambda=\lambda_1\dots\lambda_k$ and
$\mu=\mu_1\dots\mu_k$, and suppose that $\lambda$ is an
isolated eigenvalue of multiplicity $1$ in the spectrum
of $\wedge^k A$.
The following are equivalent:
\begin{itemize}
\item[(1)]
The eigenspace for $A$ corresponding to
$\lambda_1,\dots,\lambda_k$ is invariant for $B$.

\item[(2)]
There exists $\rho>0$ such that
the line segments
\[
\begin{aligned}
\{\lambda x + \mu y =1\} & \cap\Delta_{\rho}(1/\lambda, 0)
\text{ and } \\
\{(1/\lambda)x+ \mu y=1\}& \cap\Delta_{\rho}(\lambda, 0)
\end{aligned}
\]
are contained in 
$
\sigma_p\bigl(
\wedge^k A, \wedge^k B
\bigr)
$
and 
$
\sigma_p\bigl(
\wedge^k A^{-1}, \wedge^k B
\bigr),
$
respectively.

\item[(3)]
The lines
\begin{equation*}
\{\lambda x + \mu y =1\}
\quad\text{ and }\quad 
\{(1/\lambda)x+ \mu y=1\} 
\end{equation*}
are contained in 
$
\sigma_p\bigl(
\wedge^k A, \wedge^k B
\bigr)
$
and 
$
\sigma_p\bigl(
\wedge^k A^{-1}, \wedge^k B
\bigr),
$
respectively.
\end{itemize}
\end{theoremB}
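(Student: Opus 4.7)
The plan is to reduce Theorem~B to Theorem~A by passing to $k$-th exterior powers: the degree-$k$ curve $\Gamma$ will correspond, via $(\wedge^k A, \wedge^k B)$, to a single spectral line through the isolated simple eigenvalue $\lambda = \lambda_1\cdots\lambda_k$ of $\wedge^k A$, converting the degree-$k$ problem into the linear case already handled.

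The implication $(3) \Rightarrow (2)$ is immediate. For $(1) \Rightarrow (3)$, I would let $V \subseteq H$ be the $k$-dimensional eigensubspace of $A$ for $\lambda_1, \dots, \lambda_k$, assumed $B$-invariant. The curve $\sigma_p(A|_V, B|_V) = \{\det(xA|_V + yB|_V - I) = 0\}$ has degree $k$ and lies in $\sigma_p(A, B)$; by the hypothesis that each $(1/\lambda_i, 0)$ belongs only to components contained in $\Gamma$, all components of $\sigma_p(A|_V, B|_V)$ lie in $\Gamma$, and degree matching forces $\sigma_p(A|_V, B|_V) = \Gamma$. Comparing $y$-intercepts then yields $\det(B|_V) = \mu$. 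The one-dimensional subspace $\wedge^k V \subseteq \wedge^k H$ is invariant under both $\wedge^k A$ and $\wedge^k B$, with scalar actions $\lambda$ and $\mu$, so every $(x, y)$ with $\lambda x + \mu y = 1$ gives a kernel vector for $x\wedge^k A + y\wedge^k B - I$; the same argument applied to $\wedge^k A^{-1}$ (acting as $1/\lambda$ on $\wedge^k V$) produces the second full line in $\sigma_p(\wedge^k A^{-1}, \wedge^k B)$.

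For the main implication $(2) \Rightarrow (1)$, I plan to apply Theorem~A with $n=2$ to the self-adjoint pair $(\wedge^k A, \wedge^k B)$ at the isolated point $\lambda \in \sigma(\wedge^k A)$, with line coefficient $a_2 = \mu$. The hypothesis of Theorem~A, that near $(1/\lambda, 0)$ the joint spectrum equals $\{\lambda x + \mu y = 1\}$ up to multiplicity, will follow from Kato's analytic perturbation theory: simplicity of $\lambda$ forces the local joint spectrum to be a single real-analytic branch, which, containing the line segment given in~(2), must be exactly that line. I would then specialize condition~(2) of Theorem~A to $\epsilon = 0$, which is legitimate because $A = K_1 + aI$ with $a \ne 0$ keeps $\sigma(A)$, hence $\sigma(\wedge^k A)$, bounded away from zero, so $A_1(0, \lambda) = \wedge^k A$ is invertible. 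With this choice of $\epsilon$, Theorem~A's condition~(2) becomes exactly the second line-segment condition in Theorem~B~(2), again upgraded to the required equality near $(\lambda, 0)$ by the analogous perturbation analysis for $\wedge^k A^{-1}$ (using simplicity of $1/\lambda \in \sigma(\wedge^k A^{-1})$). Theorem~A then delivers that the one-dimensional eigenspace of $\wedge^k A$ for $\lambda$ is invariant under $\wedge^k B$.

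To descend this invariance back to $H$, I would fix an orthonormal basis $\{e_j\}$ of $A$-eigenvectors with $A e_j = \alpha_j e_j$. Since $\lambda$ is simple in $\sigma(\wedge^k A)$, a unique multiset $\{\alpha_{i_1}, \dots, \alpha_{i_k}\}$ of $A$-eigenvalues has product $\lambda$, and the corresponding eigenvector of $\wedge^k A$ is the decomposable wedge $\omega = e_{i_1} \wedge \cdots \wedge e_{i_k}$. From the previous step $\wedge^k B \cdot \omega = \mu \omega$, i.e., $(Be_{i_1}) \wedge \cdots \wedge (Be_{i_k}) = \mu \omega$ with $\mu \ne 0$; injectivity of the Pl\"ucker embedding then identifies $\mathrm{span}(Be_{i_1}, \dots, Be_{i_k}) = \mathrm{span}(e_{i_1}, \dots, e_{i_k}) =: V$, so $V$ is $B$-invariant, and the uniqueness of the multiset identifies $V$ with the $A$-eigensubspace for $\lambda_1, \dots, \lambda_k$. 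The main technical obstacle I expect is the upgrade from ``line segment contained in the joint spectrum'' to ``spectrum equals line up to multiplicity'' near each base point, which depends crucially on isolated simplicity of $\lambda$ to rule out additional local analytic branches.
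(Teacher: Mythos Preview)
Your proposal is correct and follows essentially the same route as the paper: reduce to the line case for $(\wedge^k A,\wedge^k B)$ via simplicity of $\lambda$ (the paper invokes Lemma~\ref{invertible2} through Theorem~\ref{T:generic}, which is exactly Theorem~A for two operators with $A_1$ invertible and $\epsilon=0$), then descend the resulting one-dimensional invariance back to $H$. The only notable stylistic difference is in the descent step: you use injectivity of the Pl\"ucker embedding to conclude that $(Be_{i_1})\wedge\dots\wedge(Be_{i_k})=\mu\,e_{i_1}\wedge\dots\wedge e_{i_k}$ with $\mu\ne 0$ forces equality of spans, whereas the paper argues directly via Cramer's rule that $\langle Be_{i_j},e\rangle=0$ for every eigenvector $e$ of $A$ outside $V$; these are equivalent arguments, and your identification of the ``upgrade'' from containment to equality (handled in the paper by the nonsingularity established in Theorem~\ref{analytic_near_isolated_point}) as the main technical point is accurate.
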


An extension of this result to a tuple of arbitrary length
holds here as well, and is derived from Theorem B exactly
the same way as the 
result of Theorem A is derived from the
coresponding result for two operators. For this reason its
precise statement is omitted.
Application to the classical setting when
$A_1,\dots,A_n$ are self-adjoint operators on $\C^N$
yields the following theorem (here, as explained in
Section~\ref{S:exterior-powers}, for an invertible self-adjoint
operator $A$ we consider $\wedge^{N-k}A$ in a natural
way as an operator on $\wedge^k\C^N$, and
again, we can always reduce
to the ``general position'' situation considered below).

\begin{theoremC}
Let $C$ be a reducible real algebraic hypersurface
of degree $N$ in $\C^n$, and let $\Gamma$ be a
degree $k$ hypersurface that is a union of components of
$C$, such that for each $i$ the $x_i$-axis intersects $\Gamma$
in the $k$ points  $1/\alpha_{i1},\dots, 1/\alpha_{ik}$,
counted with multiplicities.
Let $a_i= \alpha_{i1}\dots\alpha_{ik}$ and
suppose also that each point
$1/\alpha_{1j}$ belongs only to components of $C$ contained
in $\Gamma$.
Let a tuple $(A_1,\dots,A_n)$ consisting of
self-adjoint operators on $\C^N$, with $A_1$ invertible,
be a determinantal representation of $C$,
and suppose that $a_1$ is an eigenvalue of multiplicity
$1$ for $\wedge^k A_1$.

This
representation induces a determinantal
representation of $\Gamma$
if and only if the hypersurface
\[
\{
\det(x_1 \wedge ^k A_1+\dots + x_n\wedge ^k A_n - I)=0
\}
\]
contains the hyperplane
$\{ a_1 x_1+ \dots + a_nx_n = 1\}$,
and the hypersurface
\[
\{
\det( x_1\wedge^{N-k} A_1 + x_2\wedge^k A_2 + \dots +
      x_n\wedge^k A_n - I) = 0
\}
\]
contains the hyperplane
$
\{
(\det A_1/a_1)x_1 + a_2x_2 + \dots + a_nx_n = 1
\}.
$
\end{theoremC}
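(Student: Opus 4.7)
The plan is to deduce Theorem~C from the $n$-tuple extension of Theorem~B together with the standard duality between $\wedge^k$ and $\wedge^{N-k}$ in finite dimensions. Since every self-adjoint operator on $\C^N$ lies in $\E(\C^N)$ (with $A-aI$ of finite rank, hence compact), the $n$-tuple version of Theorem~B applies directly to the tuple $(A_1,\dots,A_n)$.

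First I would reformulate ``the representation induces a determinantal representation of $\Gamma$'' as the existence of a common invariant subspace $L\subset\C^N$ of dimension $k$ for $A_1,\dots,A_n$ such that $\Gamma=\{\det(x_1 A_1|_L+\dots+x_n A_n|_L-I)=0\}$. For self-adjoint tuples such an invariant $L$ is automatically reducing, and the eigenvalues of $A_1|_L$ are forced to be $\alpha_{11},\dots,\alpha_{1k}$, so $a_1=\det(A_1|_L)$ is the eigenvalue of $\wedge^k A_1$ on the one-dimensional subspace $\wedge^k L\subset\wedge^k\C^N$. Since $A_1$ is diagonalizable, $\wedge^k A_1$ is diagonalized by decomposable tensors of eigenvectors of $A_1$, and the multiplicity-one assumption on $a_1$ forces $L$ to be the unique span of the $k$ eigenvectors of $A_1$ whose eigenvalues multiply to $a_1$; in particular, such an $L$ is unique if it exists. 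The hypothesis that each point $1/\alpha_{1j}$ lies only on components of $C$ contained in $\Gamma$ is precisely the multi-operator analogue of the axis-intersection hypothesis of Theorem~B.

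Next I would translate the $\wedge^{N-k}A_1$ hypothesis into a $(\wedge^k A_1)^{-1}$ hypothesis of the type appearing in Theorem~B. For an invertible self-adjoint $A_1$ on $\C^N$, eigenvalue counting shows that $\wedge^{N-k}A_1$ is conjugate to $(\det A_1)(\wedge^k A_1)^{-1}$ under the Hodge-type identification $\wedge^{N-k}\C^N\cong\wedge^k\C^N$, and the spectra of the associated pencils agree after this identification. Making the substitution $y_1=(\det A_1)x_1$ converts the pencil $x_1\wedge^{N-k}A_1+\sum_{j\geq 2}x_j\wedge^k A_j-I$ into $y_1(\wedge^k A_1)^{-1}+\sum_{j\geq 2}x_j\wedge^k A_j-I$, and the hyperplane $\{(\det A_1/a_1)x_1+\sum_{j\geq 2}a_j x_j=1\}$ into $\{(1/a_1)y_1+\sum_{j\geq 2}a_j x_j=1\}$. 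Thus the hypothesis of Theorem~C is equivalent to the two hyperplanes $\{a_1 x_1+\sum_{j\geq 2}a_j x_j=1\}$ and $\{(1/a_1)y_1+\sum_{j\geq 2}a_j x_j=1\}$ being contained, respectively, in $\sigma_p(\wedge^k A_1,\dots,\wedge^k A_n)$ and $\sigma_p((\wedge^k A_1)^{-1},\wedge^k A_2,\dots,\wedge^k A_n)$.

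Finally, applying the $n$-tuple extension of Theorem~B to $(\wedge^k A_1,\dots,\wedge^k A_n)$ with the simple eigenvalue $a_1$ of $\wedge^k A_1$, these two hyperplane containments together are equivalent to the one-dimensional $a_1$-eigenspace of $\wedge^k A_1$ being invariant under each $\wedge^k A_j$. By the uniqueness discussed above, this eigenspace is spanned by a decomposable tensor $v_1\wedge\dots\wedge v_k$, whose invariance under every $\wedge^k A_j$ forces $L=\mathrm{span}(v_1,\dots,v_k)$ to be invariant under every $A_j$, and the restricted tuple $(A_1|_L,\dots,A_n|_L)$ then cuts out $\Gamma$. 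The step I expect to be most delicate is verifying that the subspace $L$ so produced actually reproduces $\Gamma$ rather than some different degree-$k$ component of $C$ sharing the prescribed axis data; this is where the simple-eigenvalue assumption, together with the restriction that each $1/\alpha_{1j}$ meets only components of $C$ lying in $\Gamma$, is essential to exclude spurious curves and to align the local data supplied by Theorem~B with the global determinantal conclusion.
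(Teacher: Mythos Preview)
Your proposal is correct and follows essentially the same route as the paper: both arguments rest on the identity $\wedge^{N-k}A_1=(\det A_1)\,\wedge^k A_1^{-1}$ to convert the $\wedge^{N-k}$ condition into the $\wedge^k A_1^{-1}$ condition appearing in Theorem~B, together with the $n$-tuple extension of the line-in-spectrum criterion. The only difference is ordering --- the paper first applies the duality to obtain a two-operator finite-dimensional statement (Theorem~\ref{spectral_curves_matrices}) and then extends to $n$ operators by restricting to each coordinate $2$-plane exactly as in the passage from Theorem~\ref{line_in_spectrum} to Theorem~\ref{line_in_spectrum_tuple}, whereas you invoke the $n$-tuple extension first and then the duality; the Pl\"ucker/decomposable-tensor step you sketch is precisely the Cramer's-rule argument of Theorem~\ref{T:generic}.
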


Now we turn to spectral continuity.
For a positive $\epsilon$ we say that a vector $\xi$ is an
$\epsilon$-eigenvector of an operator $A$
(almost eigenvector) if there exists $\lambda$ such that
\[
\left\Vert A\xi -\lambda\xi \right\Vert <
\epsilon \left\Vert \xi \right\Vert.
\]
Our first result regarding spectral continuity,
Theorem~\ref{close to line}, states that,
under some natural assumptions, if the joint spectrum of a
pair $(A_1,A_2)$ of self-adjoint operators, with $A_1$
invertible, is
$\epsilon$-close in the Hausdorff
metric to a line $\{ \alpha x+ \beta y=1\}$ in a neighborhood
of an isolated spectral point of $A_1$,
and the same is true for the joint spectrum of the pair
$(A_1^{-1}, A_2)$, then they have a
common almost eigenvector of order $\sqrt{\epsilon}$.
If $|\beta|=\Vert A_2\Vert$, the condition on the
joint spectrum of $A_1^{-1}$ and $A_2$ can be omitted.
As a corollary to this result we obtain the following
estimate for the commutant of two self-adjoint matrices.

\begin{theoremD}
Let $A_1$ and $A_2$ be two self-adjoint $N\times N$ matrices
with eigenvalues $\alpha_1,\dots,\alpha_N$ and
$\beta_1,\dots,\beta_N$
respectively, satisfying
$|\alpha_1|>\dots>|\alpha_N|>0$ and 
$|\beta_1|>\dots>|\beta_N|>0$.
Suppose that $\ell_1,\dots,\ell_N$ is a family of lines,
\[
\ell_j=
\{ \alpha_{n(j)}x+\beta_j y=1\}, \ 1\leq j, n(j)\leq N, 
\]
such that:  
\begin{itemize}
\item[(1)]
each of the points $(\frac{1}{\alpha_k},0), \ 1\leq k\leq N$
belongs to one of these lines;

\item[(2)]
there exist $0<\rho<1$ and $0< \epsilon \ll \rho$ such that
conditions (1) and (2) of Theorem~\ref{almost} are true for
$\sigma_p(A_1,A_2)$ and each 
$\ell_j$.
\end{itemize}
Then if $\epsilon$ is small enough, the norm of the
commutant of $A_1$ and $A_2$ is at most of order
$\epsilon ^{1/2^N}$.
\end{theoremD}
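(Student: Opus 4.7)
The plan is to extract $N$ common almost-eigenvectors for the pair $(A_1,A_2)$ one at a time by iterated application of Theorem~\ref{close to line} (equivalently Theorem~\ref{almost}) on successive orthogonal complements, and then to read off the commutator bound from the resulting nearly-diagonal matrix representation of $A_1$ and $A_2$. The exponent $1/2^N$ arises because each extraction replaces the ambient closeness-to-lines error $\delta$ by roughly $\sqrt{\delta}$ in the restricted problem, so after $N$ iterations the error has been square-rooted $N$ times starting from $\epsilon$.

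First, by hypothesis (2) applied to some line $\ell_1=\{\alpha_{n(1)}x+\beta_1 y=1\}$, the assumptions of Theorem~\ref{almost} hold for $(A_1,A_2)$ near the isolated spectral point $(1/\alpha_{n(1)},0)$, so there is a unit vector $\xi_1$ with
\[
\|A_1\xi_1-\alpha_{n(1)}\xi_1\|\le C\sqrt{\epsilon},\qquad
\|A_2\xi_1-\beta_1\xi_1\|\le C\sqrt{\epsilon}.
\]
Let $P_2$ be the orthogonal projection onto $\xi_1^{\perp}$ and set $\tilde A_i^{(2)}=P_2A_iP_2|_{\xi_1^{\perp}}$. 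These are self-adjoint operators on an $(N-1)$-dimensional space, and a Weyl-type perturbation argument, combined with a local analysis of the characteristic equation $\det(xA_1+yA_2-I)=0$ analogous to the reduction used in the proofs of Theorems~B and C, shows that the proper joint spectrum $\sigma_p(\tilde A_1^{(2)},\tilde A_2^{(2)})$ is within Hausdorff distance $O(\sqrt{\epsilon})$ of the remaining lines $\ell_2,\dots,\ell_N$ near their corresponding isolated points.

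Iterating this procedure $N$ times produces an orthonormal basis $\xi_1,\dots,\xi_N$ of $\C^N$ such that, at stage $k$, with $P_k$ the projection onto $\mathrm{span}\{\xi_1,\dots,\xi_{k-1}\}^{\perp}$,
\[
\|P_kA_iP_k\xi_k-\lambda_{ik}\xi_k\|\le C_k\,\epsilon^{1/2^k},\qquad i=1,2,
\]
where $\lambda_{1k}$ and $\lambda_{2k}$ are among the $\alpha_m$ and $\beta_m$ respectively. In this basis the off-diagonal entries of the matrix of $A_i$ satisfy, for $k>j$,
\[
|\langle A_i\xi_j,\xi_k\rangle|=|\langle P_jA_i\xi_j,\xi_k\rangle|\le\|P_jA_i\xi_j-\lambda_{ij}\xi_j\|\le C_j\,\epsilon^{1/2^j},
\]
and by self-adjointness the same bound controls the case $k<j$. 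Since $\epsilon^{1/2^j}\le\epsilon^{1/2^N}$ for every $j\le N$ when $0<\epsilon<1$, both matrices of $A_1$ and $A_2$ in the basis $\{\xi_k\}$ differ from diagonal matrices by at most $O(\epsilon^{1/2^N})$ in norm, whence $\|[A_1,A_2]\|=O(\epsilon^{1/2^N})$.

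The main obstacle is the descent step. One must show quantitatively that closeness to the $N-1$ remaining lines is inherited by the restricted pair $(\tilde A_1^{(2)},\tilde A_2^{(2)})$, with the error degrading only to $O(\sqrt{\epsilon})$. The heart of this is a perturbation statement about the factorization of $\det(xA_1+yA_2-I)$ near $(1/\alpha_{n(1)},0)$ once the one-dimensional component corresponding to $\xi_1$ is split off. The square-root loss itself is already present in the single-eigenvector statement of Theorem~\ref{close to line}; iterating it $N$ times is what produces the exponent $1/2^N$, while the separation hypotheses $|\alpha_i|>|\alpha_j|$ and $|\beta_i|>|\beta_j|$ guarantee that the isolated points remain isolated at each stage and that the $\xi_k$'s assemble into a genuine orthonormal basis.
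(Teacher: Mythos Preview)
Your overall strategy matches the paper's: extract almost-eigenvectors one at a time via Theorem~\ref{almost}, compress to the orthogonal complement, show the hypotheses persist with error degraded from $\epsilon$ to $O(\sqrt{\epsilon})$, and iterate $N$ times. One minor correction: Theorem~\ref{almost} actually produces $\xi_1$ as an \emph{exact} eigenvector of $A_1$ (and only an almost-eigenvector of $A_2$), so the successive compressions of $A_1$ are restrictions to its own invariant subspaces and $A_1$ ends up exactly diagonal in the resulting basis; this is how the paper sets things up and it simplifies the bookkeeping.

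The real gap is the descent step, which you correctly flag as ``the main obstacle'' but do not actually carry out. A ``Weyl-type perturbation argument'' controls eigenvalues of a single perturbed operator, not the Hausdorff distance of a determinantal curve in $\C^2$ to a family of lines, and the latter is what is needed here. The paper's argument is specific: writing $\mathcal P(x,y)=\det(xA_1+yA_2-I)$ and $\mathcal P_1(x,y)=\det(xA_1^{(1)}+yA_2^{(1)}-I)$, the almost-eigenvector bound on the $n(1)$-th row and column of $A_2$ yields
\[
\mathcal P(x,y)=(\alpha_{n(1)}x+\beta_1 y-1)\,\mathcal P_1(x,y)+\mathcal Q(x,y)
\]
with all coefficients of $\mathcal Q$ of size $O(\sqrt{\epsilon})$. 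At a zero of $\mathcal P_1$ near some $(1/\alpha_{n(m)},0)$ one then parametrizes along the direction of $\ell_m$, uses the uniform lower bound on the directional derivative of $\mathcal P$ coming from condition~(2) of Theorem~\ref{almost}, and applies Koebe's $1/4$ theorem to locate a nearby zero of $\mathcal P$, which in turn lies within $\epsilon$ of $\ell_m$. This is what gives $\epsilon_2=O(\sqrt{\epsilon_1})$ and drives the induction; without this determinant-factorization-plus-Koebe step the iteration has no engine.
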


The structure of this paper is as follows.
Section~\ref{S:determining-functions} is devoted to
determining functions.
In Section~\ref{S:conditions-algebraic-curve} we find
conditions for an algebraic curve to be a part of
the proper joint spectrum. These conditions are expressed
in terms of holomorphy of a sequence of certain
operator-valued functions.
In Section~\ref{S:line-in-spectrum}
we prove Theorem~A.
Theorems~B and C are proved in Section~\ref{S:exterior-powers}.
In Section~\ref{S:example} we give an
example of a circle in the spectrum, where the existence of
a common reducing subspace is derived not from exterior
products, but directly from the vanishing residues conditions
found in Section~\ref{S:conditions-algebraic-curve}.
Section~\ref{S:spectral-continuity} is devoted to spectral
continuity. Theorem~D is proved in Section~\ref{S:norm-estimates}.
Finally, Section~\ref{S:concluding-remarks}
contains several concluding remarks and
open questions.

We would like to thank R.Yang for useful comments.

\section{Determining functions}
\label{S:determining-functions}

Let $A_1$ and $A_2$ be bounded  operators on a Hilbert space
$H$. Recall that the \emph{proper part} of the projective
joint spectrum
$\sigma(A_1,A_2,I)$, or just the \emph{proper projective
spectrum} $\sigma_p(A_1,A_2)$  is the following set:
\[
\sigma_p(A_1,A_2)=
\{ (x,y)\in \C^2: \ (x,y,-1)\in \sigma(A_1,A_2,I)\}.
\]

It was shown in \cite{SYZ} that if $A_1$ and $A_2$ are
compact, then $\sigma_p(A_1,A_2)$ is an analytic set of
codimension one in $\C^2$. The following explicit
construction of the analytic function locally determining
this set was given there. We present it for tuples of
self-adjoint operators: the setting we consider in this paper.

Suppose that $A_1,\dots ,A_m$ is a tuple of 
compact self-adjoint
operators on a Hilbert space $H$.  Choose a small
$\epsilon>0$ and finite rank self-adjoint operators
$K_1,\dots,K_m$ such that
$\left\Vert A_j- K_j \right\Vert <\epsilon$.
If $(w_1,\dots, w_m)\in \C^m$ satisfy
$\sum_{j=1}^m |w_j|<\frac{1}{\epsilon}$, then the operator
$I-\sum_{j=1}^mw_j(A_j-K_j)$ is invertible and we have
\small{
\begin{eqnarray}
\sum_{j=1}^m w_jA_j -I=
\sum_{j=1}^m w_jK_j-I+ \sum_{j=1}^mw_j(A_j-K_j) \nonumber \\
=
\left(
I-\sum_{j=1}^mw_j(A_j-K_j)
\right)
\left(
\sum_{l=1}^m w_l \left( I-\sum_{j=1}^mw_j(A_j-K_j) \right)^{-1}
               K_l - I
\right). \nonumber
\end{eqnarray}
}

Thus, $(w_1,\dots,w_m)\in \sigma_p(A_1,\dots,A_m)$ 
if and only if the operator
$$
\sum_{l=1}^m
w_l\left( I-\sum_{j=1}^mw_j(A_j-K_j) \right)^{-1} K_l-I
$$
is not invertible. Since
$
\sum_{l=1}^m
w_l\left( I-\sum_{j=1}^mw_j(A_j-K_j) \right)^{-1}K_l
$
is of finite rank, there is a finite dimensional subspace
$L$ of $H$ such that this operator vanishes on the
complement to this subspace and is represented by an
$n\times n $ matrix on this subspace.
Therefore, this operator is not invertible if and only if
\begin{equation}\label{determine}
\det\left(
\sum_{l=1}^m
w_l\left( I-\sum_{j=1}^mw_j(A_j-K_j) \right)^{-1}K_l -I \right)
=0.
\end{equation}
The left-hand side of \eqref{determine} is clearly an
analytic function of $w_1,\dots,w_m$ in the domain
$\{ \sum_{j=1}^m |w_j|<\frac{1}{\epsilon} \}$ and
\eqref{determine}  determines $\sigma_p(A_1,\dots,A_m)$ in
this domain. We call this function a
\emph{determining function} of the proper projective spectrum.
It follows that a different choice of the finite rank
approximations leads to a determining function with the
same divisor of zeros in
$
\{(w_1,\dots,w_m)\in \C^m: \
\sum_{j=1}^m |w_j|<\frac{1}{\epsilon}
\}
$.

If $A_1$ and $A_2$ are not compact, the joint spectrum is
not necessarily an analytic set. For example, if $A_1=I$ is
the identity operator, the joint spectrum is a cone with
vertex at $(1,0)$ that consists of lines
$\{ x+\lambda y=1\}, \ \lambda \in \sigma(A_2)$. Thus,
if the cardinality of $\sigma(A_2)$ is infinite, the
joint spectrum is not analytic at $(1,0)$. Nevertheless,
essentially the same argument we used above to show the
analyticity of the joint spectrum in the compact case,
establishes the following local result.

\begin{theorem}
Let $A_1$ and $A_2$ be bounded operators on $H$, with $A_1$
normal, and $\lambda\neq 0$ be an isolated point of
$\sigma(A_1)$ of finite multiplicity. Then
$\sigma_p(A_1,A_2)$ is an analytic set in a neighborhood of
$(\frac{1}{\lambda},0)$.
\end{theorem}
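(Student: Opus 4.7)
The plan is to imitate the determining-function construction given earlier in this section, with the finite-rank approximation replaced by the Riesz spectral projection of $A_1$ at the isolated point $\lambda$. First I would let $P$ denote the Riesz projection of $A_1$ associated with $\lambda$; since $A_1$ is normal, $P$ is the \emph{orthogonal} projection onto the finite-dimensional eigenspace $L:=\ker(A_1-\lambda I)$, and $A_1 P = \lambda P$. Decomposing $A_1 = \lambda P + A_1(I-P)$, the restriction of $A_1$ to $L^\perp$ is normal with spectrum $\sigma(A_1)\setminus\{\lambda\}$, which stays at a positive distance from $\lambda$.

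Next I would set
\[
M(x,y):=xA_1(I-P)+yA_2-I.
\]
At $(1/\lambda,0)$ this operator preserves the orthogonal decomposition $H=L\oplus L^\perp$, acts as $-I$ on $L$, and as $(1/\lambda)A_1|_{L^\perp}-I$ on $L^\perp$; both pieces are invertible, so $M(1/\lambda,0)$ is invertible. By continuity and a Neumann series argument there is a polydisk $\Delta$ about $(1/\lambda,0)$ on which $M(x,y)^{-1}$ exists and depends analytically on $(x,y)$. The factorization
\[
xA_1+yA_2-I=M(x,y)+\lambda x P=M(x,y)\bigl(I+\lambda x\,M(x,y)^{-1}P\bigr)
\]
then reduces the non-invertibility of $xA_1+yA_2-I$ on $\Delta$ to that of $I+\lambda x\,M(x,y)^{-1}P$.

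Finally, writing $P=\iota\pi$ with $\iota\colon L\hookrightarrow H$ the inclusion and $\pi\colon H\to L$ the orthogonal projection, Sylvester's determinant identity translates this to the vanishing of the finite-dimensional determinant
\[
f(x,y):=\det\bigl(I_L+\lambda x\,\pi M(x,y)^{-1}\iota\bigr),
\]
which is an analytic function of $(x,y)$ on $\Delta$. Thus $\sigma_p(A_1,A_2)\cap\Delta=\{f=0\}$, proving the theorem (with the trivial case $f\equiv 0$ meaning all of $\Delta$ is in the spectrum). The only delicate step I expect is the initial reduction, justifying that $P$ is an orthogonal projection of finite rank that commutes with $A_1$; this uses normality of $A_1$ together with the assumed finite multiplicity, after which the argument is a straightforward adaptation of the one used earlier in the compact case.
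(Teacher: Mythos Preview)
Your proposal is correct and follows essentially the same approach as the paper: both arguments split off the finite-rank piece $\lambda P$ from $A_1$ via the spectral projection (your $M(x,y)$ is exactly the paper's $\tilde A(x,y)$), verify invertibility of the remainder near $(1/\lambda,0)$, and reduce non-invertibility of the pencil to that of a finite-rank perturbation of the identity, hence to the vanishing of a finite determinant depending analytically on $(x,y)$. Your write-up is in fact slightly cleaner, making the factorization and the Sylvester step explicit where the paper's formulation of $B(x,y)$ is a bit terse.
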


\begin{proof}
The spectral decomposition of $A_1$ is in the form
$$
A_1=\lambda P_1+\int_{\sigma(A_1)\setminus \{ \lambda\}} zdE(z),
$$
where the operator $P_1$ is the finite
rank orthogonal projection of
$H$ onto the eigenspace of $A_1$ with eigenvalue $\lambda$
and $dE$ is the spectral measure on the rest of
$\sigma(A_1)$. Since $\lambda$ is an isolated spectral
point of $A_1$, if $(x,y)$ is close to
$(\frac{1}{\lambda},0)$, the operator
$$
\tilde{A}(x,y)=
x\int_{\sigma(A_1)\setminus \{ \lambda\}}zdE(z) + yA_2 - I
$$
is inverible. Therefore, such a point $(x,y)$ belongs to
the joint spectrum if and only if the operator
$$
B(x,y)=xP_1\tilde{A}(x,y)^{-1}-I
$$
is not invertible. Since $xP_1\tilde{A}(x,y)$ has finite rank
$n$ which is equal to the rank of $P_1$, the pairs $(x,y)$
for which $B(x,y)$ is not invertible are zeros of a
determinant of an $n\times n$ matrix, whose coefficients
are analytic functions of $(x,y)$, and the result follows.
\end{proof}

If the multiplicity of an isolated spectral point
$\lambda \in \sigma(A_1)$ is equal to one, the local
analyticity of the joint spectrum holds even without
$A_1$ being  normal.

Let $A$ be a bounded operator acting on $H$, and let
$\lambda $ be an isolated spectral point of $A$. Recall
that $\lambda$ is said to have multiplicity $k$ if for a
contour $\gamma$ in the resolvent set that contains
$\lambda$ as the only spectral point of $A$
\begin{equation}\label{contour_integral}
P_\lambda=\frac{1}{2\pi i}\int_\gamma (wI-A)^{-1}dw,
\end{equation}
is a rank $k$ projection (not necessarily orthogonal).

\begin{theorem}\label{analytic_near_isolated_point}
Let $A_1$ and $A_2$ be  operators on $H$ and
$\lambda\neq 0$ be an isolated spectral point of $A_1$
of multiplicity one. Then there exists $\rho >0$ such
that in $\Delta_\rho(\frac{1}{\lambda},0)$ the proper joint
spectrum $\sigma_p(A_1,A_2)$ is a nonsingular analytic set.
\end{theorem}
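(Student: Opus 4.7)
The plan is to use the rank-one Riesz projection associated with $\lambda$ to reduce the invertibility of $xA_1 + yA_2 - I$ near $(1/\lambda, 0)$ to the vanishing of a single scalar-valued analytic function, and then to apply the analytic implicit function theorem to see that its zero locus is a nonsingular curve.

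Let $P_\lambda = \frac{1}{2\pi i}\int_\gamma (wI - A_1)^{-1}\, dw$ be the Riesz projection for a small contour $\gamma$ around $\lambda$, which by hypothesis has rank one, and set $Q = I - P_\lambda$. Since $P_\lambda$ commutes with $A_1$, one has the $A_1$-invariant direct sum decomposition $H = \text{Range}(P_\lambda) \oplus \text{Range}(Q)$, on which $A_1|_{\text{Range}(P_\lambda)}$ acts as multiplication by $\lambda$ while $\sigma(A_1|_{\text{Range}(Q)}) = \sigma(A_1) \setminus \{\lambda\}$, so $\lambda$ lies in the resolvent set of $A_1|_{\text{Range}(Q)}$. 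Writing $A_2$ in $2\times 2$ block form with respect to this decomposition, with scalar $(1,1)$ entry $a$, off-diagonal entries $B$ and $C$, and $(2,2)$ entry $D$, the operator $xA_1 + yA_2 - I$ becomes the block operator
\[
\begin{pmatrix}
x\lambda - 1 + ya & yB \\
yC & U(x,y)
\end{pmatrix},
\qquad
U(x,y) := xA_1|_{\text{Range}(Q)} - I_{\text{Range}(Q)} + yD.
\]

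Next I would verify that $U(x,y)$ is invertible on $\text{Range}(Q)$ throughout some polydisk $\Delta_\rho(1/\lambda, 0)$: at the center, $U(1/\lambda, 0) = (1/\lambda)\bigl(A_1|_{\text{Range}(Q)} - \lambda I_{\text{Range}(Q)}\bigr)$ is invertible by the observation above, and openness of the set of invertible operators preserves this for nearby $(x,y)$. The Schur complement identity then gives that $xA_1 + yA_2 - I$ is invertible if and only if the scalar function
\[
g(x,y) = x\lambda - 1 + ya - y^2\, B\, U(x,y)^{-1}\, C
\]
is nonzero. Since $g$ is analytic on $\Delta_\rho(1/\lambda, 0)$, satisfies $g(1/\lambda, 0) = 0$, and has $(\partial g/\partial x)(1/\lambda, 0) = \lambda \neq 0$, the analytic implicit function theorem, after possibly shrinking $\rho$, exhibits $\{g = 0\}$ as the graph of an analytic function $x = \phi(y)$, showing that $\sigma_p(A_1, A_2)$ is a nonsingular analytic set near $(1/\lambda, 0)$.

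The only conceptual subtlety is that $P_\lambda$ need not be orthogonal, as we have not assumed $A_1$ is normal. One must therefore work with the purely algebraic direct sum decomposition rather than an orthogonal one and use the corresponding non-orthogonal block-operator formalism; this slightly complicates the bookkeeping for $U(x,y)^{-1}$ but obstructs neither the Schur complement manipulation nor the final application of the implicit function theorem.
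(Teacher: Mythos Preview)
Your proof is correct, but it takes a genuinely different route from the paper's. The paper works with the \emph{moving} Riesz projection $P(x,y)=\frac{1}{2\pi i}\int_\gamma (wI-(xA_1+yA_2))^{-1}\,dw$ of the full pencil and defines the joint spectrum locally as the zero set of the scalar function $\mathcal F(x,y)=\langle (A(x,y)-I)P(x,y)e,\,e\rangle$, where $e$ is a unit eigenvector of $A_1$ for $\lambda$; a direct contour-integral computation then gives the explicit Taylor expansion of $\mathcal F$ and in particular $\partial\mathcal F/\partial x|_{(1/\lambda,0)}=\lambda$. Your argument instead fixes once and for all the Riesz decomposition of $A_1$ alone and reduces invertibility of $xA_1+yA_2-I$ to a Schur complement, which is more elementary: it sidesteps the perturbation-theoretic fact that the isolated eigenvalue of $A(x,y)$ remains simple, and the nonvanishing $x$-derivative is immediate from the linear term. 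What the paper's approach buys, however, is the explicit series for $\mathcal F$ in terms of the operators $A_j(wI-\tfrac{1}{\lambda}A_1)^{-1}$; this expansion is precisely the machinery exploited in the next section to derive the residue conditions \eqref{condition1}--\eqref{condition2}, so while your proof is cleaner for the theorem as stated, the paper's proof is doing double duty.
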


\begin{proof}
If $\rho$ is small enough and
$(x,y)\in \Delta_\rho(\frac{1}{\lambda},0)$, the operator
$A(x,y)=xA_1+yA_2$ has an isolated spectral point
$\lambda(x,y)$ close to $1$
that also has multiplicity one,
so the projection
$$
P(x,y)=\frac{1}{2\pi i}\int_\gamma (wI-A(x,y))^{-1}dw
$$
has rank one, cf. \cite[p. 13]{GK}, and the range of $P(x,y)$
consists of eigenvectors of $A(x,y)$ with eigenvalue
$\lambda(x,y)$. The joint spectrum of $A_1$ and $A_2$ consists
of those pairs $(x,y)$ for which $\lambda(x,y)=1$. Let $e$ be
the unit eigenvector of $A_1$ with eigenvalue $\lambda$.
Then for $(x,y)\in \Delta_\rho(\frac{1}{\lambda},0)$ we have
that $P(x,y)e$ is close to $e$, and, therefore,
$P(x,y)e\neq 0$. Now, $\lambda(x,y)=1$ if and only if
$A(x,y)P(x,y)e=P(x,y)e$ and that happens if and only if
\begin{equation}\label{belonging_to_spectrum}
\langle (A(x,y)P(x,y)-P(x,y))e,e\rangle =0.
\end{equation}
Equation \eqref{belonging_to_spectrum} determines
$\sigma_p(A_1,A_2)$  near the point $(\frac{1}{\lambda},0)$
and it is easily seen that the left-hand side is analytic
in $x$ and $y$.   Now we write down explicitly the Taylor
decomposition of this function in terms of
$\Delta x=x-\frac{1}{\lambda}$ and $y$. We have
\small{
\begin{eqnarray}
A(x,y)P(x,y)-P(x,y)=
\frac{1}{2\pi i}\int_\gamma (w-1)(wI-A(x,y))^{-1}dw
\nonumber \\
=
\frac{1}{2\pi i}\int_\gamma (w-1)(wI-\frac{1}{\lambda}A_1)^{-1}
\left(
I-(\Delta xA_1+yA_2)(wI-\frac{1}{\lambda}A_1)^{-1}
\right)^{-1}dw
\nonumber \\
=
\sum_{j=0}^\infty
\frac{1}{2\pi i}\int_\gamma (w-1)(wI-\frac{1}{\lambda}A_1)^{-1} \left[
(\Delta xA_1+yA_2)(wI-\frac{1}{\lambda}A_1)^{-1}
\right]^j dw
\nonumber \\
=
\sum_{k,m=0}^\infty (\Delta x )^ky^m
\left(
\frac{1}{2 \pi i}
\int_\gamma (w-1)(wI-\frac{1}{\lambda}A_1)^{-1}
                            {\mathcal D}_{k,m}(w)dw
\right),
\nonumber
\end{eqnarray}
}
\negthickspace
\negthickspace
where
$$
{\mathcal D}_{k,m}(w)=
\sum_\alpha \prod_{l=1}^{k+m} {\mathcal S}_{\alpha_l}(w),
$$
with summation taken over all sequences
$\alpha=(\alpha_1, \dots ,\alpha_{k+m})$ of zeros and
ones of length $(k+m)$ having $k$ zeros and $m$ ones, and
$$
{\mathcal S}_0=
A_1(wI- \frac{1}{\lambda}A_1)^{-1}, \ \quad
{\mathcal S}_1=
A_2(wI- \frac{1}{\lambda}A_1)^{-1}.
$$
Thus we have
\begin{equation*}
\sigma_p(A_1,A_2)\cap \Delta_\rho (\frac{1}{\lambda},0) =
\left\{
(x,y)\in \Delta_\rho(\frac{1}{\lambda},0):
{\mathcal F}(x,y)= 0
\right\},
\end{equation*}
where
\small{
\begin{equation*}
{\mathcal F}(x,y) =
\sum_{k,m=0}^\infty (x-\frac{1}{\lambda})^ky^m
\left(
\frac{1}{2 \pi i}
\int_\gamma (w-1)
\langle
(wI-\frac{1}{\lambda}A_1)^{-1}{\mathcal D}_{k,m}(w)e,e
\rangle
dw
\right)
\end{equation*}
}
\negthickspace
\negthickspace
Obviously, ${\mathcal F}(x,y)$ is a nontrivial analytic
function, so the joint spectrum is an analytic set in
$\Delta_\rho (\frac{1}{\lambda},0)$. Further, it follows
directly from the Taylor decomposition above that
\begin{eqnarray}
\frac{\partial {\mathcal F}}{\partial x}
\bigg|_{x=\frac{1}{\lambda}, y=0} =
\frac{1}{2\pi i}\int_\gamma (w-1)
\langle
(wI-\frac{1}{\lambda}A_1)^{-1}A_1
(wI-\frac{1}{\lambda}A_1)^{-1}e, e
\rangle
dw
\nonumber \\
=
\frac{\lambda}{2 \pi i}\int_\gamma \frac{dw}{w-1} =
\lambda\neq 0,
\nonumber
\end{eqnarray}
and, therefore, the zero set of ${\mathcal F}$ is
nonsingular near $(\frac{1}{\lambda},0)$. 
\end{proof}

\section{
Necessary conditions for an algebraic
curve in the joint spectrum
}
\label{S:conditions-algebraic-curve}

Let $A_1$ and $A_2$ be self-adjoint operators and 
$\lambda \neq 0$ be an
isolated point of $\sigma(A_1)$ such that
\begin{itemize}
\item[(a)]
$\sigma_p(A_1,A_2)$ in a neighborhood
$\Delta_\rho(\frac{1}{\lambda},0)$ of
$(\frac{1}{\lambda},0)$ is an algebraic curve
given by a polynomial equation ${\mathcal R}(x,y)=0$
of degree $k$, where ${\mathcal R}$ is a polynomial
with real coefficients, that is
$$
\sigma_p(A_1,A_2)\cap
\Delta_\rho \left( \frac{1}{\lambda},0\right) =
\{ (x,y)\in \Delta_\rho\left(\frac{1}{\lambda},0 \right): \
{\mathcal R}(x,y)=0\};
$$

\item[(b)]
$(\frac{1}{\lambda},0)$
belongs to only one reduced component of this curve and
is a nonsingular point on this reduced component;

\item[(c)]
the axis $\{ y=0\}$ is not tangent to
this reduced component of
the curve
at $(\frac{1}{\lambda},0)$.
\end{itemize}
Here for a curve, or more generally, for a hypersurface
defined by
a polynomial $G=G_1^{r_1}\dots G_m^{r_m}$
(with each polynomial $G_i$ irreducible and $G_i$ not associate
with $G_j$ for $i\ne j$),
the \emph{components} of that hypersurface are defined by the
polynomials $G_i^{r_i}$ (thus they are irreducible
but not necessarily reduced), and the
\emph{reduced components} are defined by the
polynomials $G_i$.
Write
\begin{equation}\label{polynom}
{\mathcal R}(x,y) =
\sum_{j=0}^k   R_j(x,y), \ \text{where} \
R_j=\sum_{m=0}^j r^j_m x^my^{j-m}, \ \text{and} \ R_0=-1.
\end{equation}
Passing to a smaller neighborhood if necessary,
we may assume that
\begin{itemize}
\item[(i)]
the reduced component
containing $(1/\lambda, 0)$
of the curve $\{ {\mathcal R}(x,y)=0\}$
has no singular points
in $\Delta_\rho(\frac{1}{\lambda},0)$;

\item[(ii)]
there is $0<\rho'<\rho$ such that
for $(x,y)\in \Delta_{\rho'}(\frac{1}{\lambda},0)$ the complex
line $\{(\tau x,\tau y): \ \tau\in \C\}$ has 
(up to multiplicity) exactly
one point of intersection with $\{ {\mathcal R}(x,y)=0\}$
that lies in $\Delta_\rho(\frac{1}{\lambda},0)$.
\end{itemize}

Let $(x,y)\in \Delta_{\rho'}(\frac{1}{\lambda},0)$ and
$(\tau x, \tau y)\in  \{ {\mathcal R}(x,y)=0\}$. Then
${\mathcal R}(\tau x, \tau y)=0$, and
the equation in $\tau$
\[
\tau^k R_k(x,y)+\tau^{k-1}R_{k-1}(x,y)+\dots +\tau R_1(x,y) -1=0
\]
has exactly one root, $\tau(x,y)$, in a neighborhood of $1$.
The corresponding eigenvalue $\mu(x,y)= 1/\tau(x,y)$
of the operator $xA_1+yA_2$
satisfies the equation
\begin{equation}\label{eigenvalue}
\mu^k - \mu^{k-1}R_1(x,y)- \ldots -R_k(x,y)=0.
\end{equation}
Of course, $\mu(x,y)$ is the only eigenvalue  of
$xA_1+yA_2$ which lies at distance of order $\rho$
from $1$ and is an isolated point of the spectrum
$\sigma(xA_1+yA_2)$. It is also clear that if $\lambda$
is a multiple spectral point of $A_1$, then $\mu(x,y)$
has the same multiplicity.

If both $x$ and $y$ are real,  $xA_1+yA_2$ is self-adjoint.
Let $\zeta(x,y)$ be an eigenvector of $xA_1+yA_2$ with
eigenvalue  $\mu (x,y)$. Then equation \eqref{eigenvalue}
implies
\begin{equation}\label{vanish}
[(xA_1+yA_2)^k-R_1(x,y)(xA_1+yA_2)^{k-1}- \ldots -R_k(x,y)]
\zeta (x,y)=0.
\end{equation}
Let $L(x,y)\subset H$ be the eigensubspace of
$xA_1+yA_2$ corresponding to $\mu (x,y)$, and let
$P(x,y):H\to L(x,y)$ be the orthogonal projection.
For $0<\delta<\tau$ write
$\gamma =\{ z\in \C : \ |z-1|<\delta \}$. We have
\begin{equation}\label{projection}
P(x,y)=
\frac{1}{2\pi i}\int_{\gamma} (wI - (xA_1+yA_2))^{-1}dw.
\end{equation}
It is readily seen that for $m=0,1,2,\dots$
\begin{equation}\label{power}
(xA_1+yA_2)^m P(x,y) =
\frac{1}{2\pi i}\int_{\gamma} w^m (w I-(xA_1+yA_2))^{-1}dw.
\end{equation}
Equations \eqref{vanish} and \eqref{power} imply that
for every $(x,y)$ sufficiently close to
$(\frac{1}{\lambda},0)$
the following identity holds:
\[
\frac{1}{2\pi i}\int_{\gamma}
\left[ w^k -\sum_{j=1}^k R_j(x,y)w^{k-j}\right]
(w I -(xA_1+ yA_2))^{-1}dw=0.
\]
Write $\Delta x= x-\frac{1}{\lambda}$. If $\Delta x$
and $y$ are sufficiently small,
the last relation implies
\begin{multline}\label{decomp2}
\frac{1}{2\pi i}\int_{\gamma}
\left[ w^k-\sum_{j=1}^k R_j(x,y)w^{k-j} \right]
(w I - \frac{1}{\lambda}A_1)^{-1}
\\
\times \sum_{m=0}^\infty
\left[
(\Delta xA_1+ yA_2)(w I - \frac{1}{\lambda}A_1)^{-1}
\right]^m
dw =0.
\end{multline}
If $\Delta x=0$, the last relation turns into the following:
\begin{multline*}
\frac{1}{2\pi i}\int_{\gamma}
\left[
w^k-\sum_{j=1}^k w^{k-j}\sum_{n=0}^j r_{j-n}^jy^nx_1^{j-n}
\right]
(w I - x_1A_1)^{-1}
\\
\times  \sum_{m=0}^\infty y^m
\left[ A_2(w I -  \frac{1}{\lambda}A_1)^{-1}  \right]^m dw =0.
\end{multline*}
Rearranging terms in the last equation, we obtain
\small{
\begin{equation}\label{decomp4}
\begin{split}
\sum_{m=0}^{k-1}
\frac{y^m}{2\pi i}\int_\gamma
\Biggl\{
\biggl(
&
w I - \frac{1}{\lambda}A_1
\biggr)^{-1}
\\
\times
\Biggl(
&
\biggl(
w^k \sum_{j=1}^k w^{k-j}\frac{r_j^j}{\lambda^j}
\biggr)
\left[
A_2(w I- \frac{1}{\lambda} A_1)^{-1}
\right]^m
\\
-
&
\sum_{n=1}^m
\biggl(
\sum_{j=n}^k w ^{k-j}\frac{r_{j-n}^j}{\lambda^{j-n}}
\biggr)
\left[
A_2( w I - \frac{1}{\lambda}A_1)^{-1}
\right]^{m-n}
\Biggr)
\Biggr\}
dw
\\
+ \sum_{m=k}^\infty \frac{y^m}{2\pi i}\int_\gamma
&
\Biggl\{
\biggl(
w I - \frac{1}{\lambda}A_1
\biggr)^{-1}
\\
\times
&
\Biggl(
\biggl(
w^k - \sum_{j=1}^k w^{k-j}\frac{r_j^j }{\lambda^j}
\biggr)
\biggl[
A_2(w I - \frac{1}{\lambda}A_1)^{-1}
\biggr]^k
\\
&
- \sum_{n=1}^k
\biggl(
\sum_{j=n}^k w^{k-j}\frac{r^j_{j-n}}{\lambda^{j-n}}
\biggr)
\biggl[
A_2(w I - x_1 A_1)^{-1}
\biggr]^{k-n}
\Biggr)
\\
&
\qquad
\times
\biggr[
A_2(w I - \frac{1}{\lambda} A_1)^{-1}
\biggr]^{m-k}
\Biggr\}
dw =0.
\end{split}
\end{equation}
}
\negthickspace
Since \eqref{decomp4} holds for every $y$ in a neighborhood
of the origin, it implies
\small{
\begin{multline}\label{condition1}
\frac{1}{2\pi i}\int_\gamma
\left\{
(w I -x_1A_1)^{-1}
\left(
\left( w^k - \sum_{j=1}^k w^{k-j}\frac{r_j^j}{\lambda^j }\right)
[A_2(w I -\frac{1}{\lambda}A_1)^{-1}]^m
\right.
\right.
\\
-
\left. \left.
\sum_{n=1}^m
\left( \sum_{j=n}^k w^{k-j}\frac{r^j_{j-n}}{\lambda^{j-n}}\right)
[A_2(w I - \frac{1}{\lambda}A_1)^{-1}]^{m-n}
\right)
\right\}
dw =0,
\end{multline}
}
\negthickspace
for $1\leq m\leq k-1$, and
\small{
\begin{multline}\label{condition2}
\frac{1}{2\pi i}\int_\gamma
\Biggl\{
\biggl(
w I -\frac{1}{\lambda}A_1
\biggr)^{-1}
\\
\times
\Biggl(
\biggl(
w^k - \sum_{j=1}^k w^{k-j}\frac{r_j^j}{\lambda^j}
\biggr)
\left[
A_2(w I- \frac{1}{\lambda}A_1)^{-1}
\right]^k
\quad
\\
\shoveright{
- \sum_{n=1}^k
\biggl(
\sum_{j=n}^k w^{k-j}\frac{r_{j-n}^j}{\lambda^{j-n}}
\biggr)
\left[
A_2(w I -\frac{1}{\lambda}A_1)^{-1}
\right]^{k-n}
\Biggr)
\qquad\qquad
}
\\
\times
\left[
A_2(w I - \frac{1}{\lambda}A_1)^{-1}
\right]^{m-k}
\Biggr\}
dw = 0
\end{multline}
}
\negthickspace
for $m\geq k$.
The integrands in \eqref{condition1} and
\eqref{condition2}
are operator-valued holomorphic functions
in the punctured disk
$\{ w \in \C: 0<|w -1|<\delta \}$ with poles at one.
We denote these integrands
by $\Psi_m(w), \ m\geq 1$. Thus, \eqref{condition1} and
\eqref{condition2} imply the following result:

\begin{theorem}\label{Psi_1}
Suppose that $A_1$ and $A_2$ are self-adjoint operators
acting on a separable Hilbert space $H$, with
$\lambda \in \sigma(A_1)$ an isolated point, and an
algebraic curve determined by a polynomial equation
\eqref{polynom} lies in
$\sigma_p(A_1,A_2)$ and satisfies conditions (a) - (c)
above. Then the integrands $\Psi_m(w)$ of
\eqref{condition1} and \eqref{condition2}
satisfy the equation
\[
\frac{1}{2\pi i}\int_\gamma \Psi_m(w)dw =0, \ m\geq 1,
\]
which is equivalent to
\begin{equation}\label{vanishing_residue}
{\mathcal R}es_{w=1}(\Psi_m)=0, \ m\geq 1.
\end{equation}
\end{theorem}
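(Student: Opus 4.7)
The plan is to convert the curve-in-spectrum hypothesis into a single operator-valued contour-integral identity, and then to extract the vanishing-residue relations by expanding the resolvent as a power series in $y$ and matching coefficients.

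First I would argue that for each real $(x,y)$ sufficiently close to $(1/\lambda,0)$, the number $\mu(x,y)=1/\tau(x,y)$ is an isolated eigenvalue of the self-adjoint operator $xA_1+yA_2$ lying inside the contour $\gamma$, and that by construction it satisfies the polynomial relation $p_{x,y}(\mu(x,y))=0$, where
\[
p_{x,y}(w)=w^k-\sum_{j=1}^k R_j(x,y)\,w^{k-j}.
\]
Since $xA_1+yA_2$ is self-adjoint, the Riesz spectral projection $P(x,y)$ in \eqref{projection} projects orthogonally onto the full $\mu$-eigenspace without Jordan complications, so $(xA_1+yA_2)P(x,y)=\mu(x,y)P(x,y)$ and therefore
\[
p_{x,y}(xA_1+yA_2)\,P(x,y)=p_{x,y}(\mu(x,y))\,P(x,y)=0
\]
as an operator identity. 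Combining this with the Cauchy-type formula \eqref{power} rewrites the identity in the equivalent form
\[
\frac{1}{2\pi i}\int_{\gamma} p_{x,y}(w)\bigl(wI-(xA_1+yA_2)\bigr)^{-1}\,dw = 0.
\]

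Next I would expand the resolvent as a Neumann series about $\tfrac{1}{\lambda}A_1$, which produces \eqref{decomp2}, and then specialize to $\Delta x=0$. The result is a single power series in $y$ whose sum is identically zero on a real neighborhood of $y=0$; by analyticity, every coefficient must vanish. The main technical step is a careful rearrangement: the coefficient of $y^m$ inherits contributions both from the $y$-dependence of each $R_j(1/\lambda,y)$ and from the $m$-th power of $A_2(wI-\tfrac{1}{\lambda}A_1)^{-1}$ in the Neumann expansion. Grouping these contributions separately in the two regimes $1\le m\le k-1$ and $m\ge k$ produces exactly the bracketed integrands appearing in \eqref{condition1} and \eqref{condition2}, which are $\Psi_m(w)$ by definition. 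Since each $\Psi_m$ is holomorphic on the punctured disk $\{0<|w-1|<\delta\}$, the vanishing contour integral is equivalent to the vanishing residue statement \eqref{vanishing_residue}.

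I expect the main difficulty to be purely combinatorial: organizing the double sum over $j$, $n$, and $m$ so that the contributions separate cleanly into the two regimes, and justifying term-by-term integration (which follows from uniform convergence of the Neumann series on the compact contour $\gamma$ for sufficiently small $|\Delta x|$ and $|y|$). The conceptual content, by contrast, is concentrated in a single observation: self-adjointness of $xA_1+yA_2$ upgrades the scalar equation $p_{x,y}(\mu)=0$ to the operator equation $p_{x,y}(xA_1+yA_2)P(x,y)=0$, after which the statement follows from routine functional calculus and analytic expansion.
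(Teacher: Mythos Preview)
Your proposal is correct and follows essentially the same route as the paper: derive the operator identity $p_{x,y}(xA_1+yA_2)P(x,y)=0$ from self-adjointness and the scalar root $\mu(x,y)$, convert it via \eqref{power} into the vanishing contour integral, expand the resolvent as a Neumann series to obtain \eqref{decomp2}, specialize to $\Delta x=0$, and read off the coefficients of $y^m$ to get \eqref{condition1}--\eqref{condition2}. Your summary of where the conceptual weight lies (the upgrade from the scalar to the operator equation) and where the work is purely combinatorial matches the paper's development precisely.
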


\begin{remark}
If the operators $A_1$ and $A_2$ are not self-adjoint,
in general, the result of Theorem~\ref{Psi_1} does not
hold since the  range of the operator \eqref{projection}
does not necessarily consist of eigenvectors of $xA_1+yA_2$.
However, if  $\lambda$ is an isolated spectral point of
$A_1$ of multiplicity one, then for $(x,y)$ sufficiently
close to $(\frac{1}{\lambda},0)$, the operator $P(x,y)$
is a rank one projection (not necessarily orthogonal),
and its range is an eigensubspace of $xA_1+yA_2$, so the
result of Theorem~\ref{Psi_1} is valid in this case too.
\end{remark}

It follows directly from \eqref{condition1} and
\eqref{condition2} that $\Psi_m$ has a pole of order
at most $m+1$ at $w =1$. We will now obtain the
expression of the residue of $\Psi_m$ at $1$.
Let
\begin{equation}\label{spectral_decomposition}
A_1=\lambda P_1+\int_{\sigma(A_1)\setminus \{ \lambda\}} zdE(z)
\end{equation}
be the spectral decomposition of $A_1$ with $P_1$ being
the orthogonal projection on the eigenspace of $A_1$
corresponding to eigenvalues $\lambda$.
If $\delta$ is small enough,  we have
\begin{eqnarray}
&&
(w I - \frac{1}{\lambda}A_1)^{-1}=
\frac{1}{w -1}P_1+
\int_{\sigma(A_1)\setminus \{ \lambda\}}
\frac{dE(z)}{w-\frac{z}{\lambda}} \nonumber \\
&&
= \frac{1}{w -1}P_1-\int _{\sigma(A_1)\setminus \{ \lambda\}}
\left(
\sum_{m=0}^\infty
\left( \frac{\lambda}{z-\lambda}\right )^{m+1}(w-1)^m \right)
dE(z)
\label{inverse1} \\
&&
= \frac{1}{w -1}P_1-\sum_{m=0}^\infty (w-1)^m
\left(
\int _{\sigma(A_1)\setminus \{ \lambda\}}
\left( \frac{\lambda}{z-\lambda}\right)^{m+1}dE(z)
\right). \nonumber
\end{eqnarray}
Write
\begin{equation}\label{operatorT}
T(A_1) = T =
\int_{\sigma(A_1)\setminus\{\lambda\}}\frac{\lambda}{z-\lambda}dE(z),
\end{equation}
then
\[
\int_{\sigma(A_1)\setminus \{ \lambda\}}
\left( \frac{\lambda}{z-\lambda} \right)^{m+1} dE(z) = T^{m+1},
\]
so \eqref{inverse1} can be written as
\begin{equation}\label{inverse2}
(w I-\frac{1}{\lambda}A_1)^{-1} =
\frac{1}{w -1}P_1 - \sum_{m=0}^\infty (w -1)^m T^{m+1},
\end{equation}
and
\begin{equation}\label{block}
A_2(w I- \frac{1}{\lambda}A_1)^{-1} =
\frac{1}{w -1} A_2P_1-\sum_{m=0}^\infty (w -1)^m A_2T^{m+1}.
\end{equation}

The following result follows from
Theorem~\ref{Psi_1} and equations
\eqref{inverse2} and \eqref{block}.

\begin{theorem}\label{Psi_2}
Under the conditions of Theorem~\ref{Psi_1} the associated
integrands $\Psi_m(\lambda)$ determined by \eqref{condition1}
and \eqref{condition2} are holomorphic in
$\{ w \in \C : |w -1|<\delta \}$.
\end{theorem}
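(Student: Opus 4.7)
The plan is to recognize the integrands $\Psi_m(w)$ as the Taylor coefficients in $y$ of the single operator-valued function
\[
F(w,y) = p(w,y)\bigl(wI - \tfrac{1}{\lambda}A_1 - yA_2\bigr)^{-1},
\]
where $p(w,y) = w^k - \sum_{j=1}^k R_j(1/\lambda, y)\,w^{k-j}$ is the polynomial from \eqref{eigenvalue}. Via the Neumann series $(wI - \tfrac{1}{\lambda}A_1 - yA_2)^{-1} = (wI - \tfrac{1}{\lambda}A_1)^{-1}\sum_{m\ge 0} y^m\bigl[A_2(wI - \tfrac{1}{\lambda}A_1)^{-1}\bigr]^m$ together with the expansion $p(w,y) = Q(w) - \sum_{n=1}^k y^n S_n(w)$, one checks by matching powers of $y$ that the coefficient of $y^m$ in $F$ coincides exactly with the integrand $\Psi_m(w)$ of \eqref{condition1} (for $m<k$) or \eqref{condition2} (for $m\ge k$). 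Thus it is enough to show that $F(w,y)$ is jointly holomorphic in $(w,y)$ on a polydisk around $(1,0)$; Cauchy's formula for the coefficients of its Taylor expansion in $y$ then yields holomorphicity of each $\Psi_m$ on a disk around $w=1$.

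To analyze $F$, I would apply analytic perturbation theory to the family $T_y = \tfrac{1}{\lambda}A_1 + yA_2$. Since $1$ is isolated in $\sigma(T_0)$ with finite multiplicity $n$ (namely the multiplicity of $\lambda$ in $\sigma(A_1)$), for $y$ small the spectral projection $P(y) = \tfrac{1}{2\pi i}\oint_{|w-1|=\delta'}(wI-T_y)^{-1}\,dw$ is analytic in $y$ of constant rank $n$, and on its range $T_y$ acts as an $n\times n$ matrix $M(y)$, self-adjoint for real $y$. Writing
\[
(wI - T_y)^{-1} = (wI_n - M(y))^{-1}P(y) + R^{\mathrm{far}}(w,y)
\]
with $R^{\mathrm{far}}$ jointly holomorphic at $(1,0)$ (since the rest of $\sigma(T_y)$ stays bounded away from $1$) reduces the problem to showing that $p(w,y)(wI_n - M(y))^{-1}$ is holomorphic in $w$ near $w=1$.

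The critical step is to show $M(y) = \xi(y)I_n$ for an analytic function $\xi$. The identity $p(\nu, y) = -\nu^k\mathcal{R}(1/(\lambda\nu), y/\nu)$ identifies the eigenvalues of $T_y$ near $1$ with the intersection points of the line $\{(s/\lambda, sy):s\in\C\}$ and the curve $\mathcal{R}=0$ near $(1/\lambda,0)$. Conditions (b) and (c) together with the implicit function theorem yield a unique smooth local branch of $\mathcal{R}=0$ through $(1/\lambda,0)$, so this line meets the branch in a single geometric point for each small $y$. Hence $M(y)$ has exactly one distinct eigenvalue $\xi(y)$; self-adjointness for real $y$ forces $M(y) - \xi(y)I_n = 0$ (precluding any Jordan block), and this identity extends to complex $y$ by analyticity. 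Consequently $(wI_n - M(y))^{-1} = (w-\xi(y))^{-1}I_n$ has only a simple pole at $w=\xi(y)$, which is cancelled by the zero of $p(w,y)$ of order $r_1 \ge 1$ at the same point (where $r_1$ is the multiplicity of the reduced component of $\mathcal{R}$ through $(1/\lambda,0)$). The product is therefore holomorphic, and combined with the holomorphicity of $p R^{\mathrm{far}}$ at $(1,0)$ this yields joint holomorphicity of $F$.

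The principal obstacle is this scalarity claim for $M(y)$: it combines the perturbation-theoretic setup with the local-geometric picture of the curve at $(1/\lambda,0)$, and specifically relies on self-adjointness to exclude a nontrivial Jordan structure when only one geometric eigenvalue is present. Everything else—the Neumann-series identification, the spectral-subspace decomposition, and the final extraction of Taylor coefficients from $F$—is routine.
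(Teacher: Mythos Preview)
Your argument is correct and takes a genuinely different route from the paper's. The paper proceeds by induction on $m$: from \eqref{condition1}--\eqref{condition2} one reads off the recurrence
\[
\Psi_m(w)=\Psi_{m-1}(w)\bigl[A_2(wI-\tfrac{1}{\lambda}A_1)^{-1}\bigr]
-\Bigl(\sum_{j=m}^k w^{k-j}\tfrac{r^j_{j-m}}{\lambda^{j-m}}\Bigr)(wI-\tfrac{1}{\lambda}A_1)^{-1}
\]
for $2\le m\le k$ (and without the second term for $m>k$). Using the Laurent expansions \eqref{inverse2}--\eqref{block}, holomorphicity of $\Psi_{m-1}$ forces $\Psi_m$ to have at most a simple pole at $w=1$; the vanishing-residue condition \eqref{vanishing_residue} from Theorem~\ref{Psi_1} then kills that pole. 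The base case $\Psi_1$ is handled by observing that $\mathcal P(w)=w^k-\sum_j w^{k-j}r_j^j/\lambda^j$ vanishes at $w=1$ (since $\mathcal R(1/\lambda,0)=0$), so $\mathcal P(w)=(w-1)\mathcal Q(w)$ and the apparent double pole drops to a simple one.

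Your approach instead bundles all the $\Psi_m$ into the single generating function $F(w,y)=p(w,y)(wI-T_y)^{-1}$ and proves joint holomorphicity of $F$ at $(1,0)$ via analytic perturbation theory; the crucial ingredient is the scalarity $M(y)=\xi(y)I_n$, which you obtain from conditions (b)--(c) (single smooth branch) together with self-adjointness for real $y$ and analytic continuation. This is more conceptual and handles all $m$ at once (indeed it yields Theorem~\ref{Psi_1} as a by-product, rather than consuming it), and the Cauchy estimates on $F$ give uniform bounds on the $\Psi_m$ for free. The paper's inductive argument, by contrast, is shorter and entirely elementary---no perturbation theory, no scalarity claim---at the cost of treating each $m$ separately and leaning on the residue theorem already established.
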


\begin{proof}
It follows from \eqref{condition1} and
\eqref{condition2} that
\begin{eqnarray}
&&
\Psi_m(w) =
\Psi_{m-1}(w)[A_2(w I -\frac{1}{\lambda}A_1)^{-1}]
\nonumber \\
&&
-
\left(
\sum_{j=m}^k w^{k-j}\frac{r_{j-m}^j}{\lambda^{j-m}}
\right)
(w I -\frac{1}{\lambda}A_1)^{-1}, \  2\leq m\leq k,
\label{recurrence} \\
&&
\Psi_m(w) =
\Psi_{m-1}(w)[A_2(w I -\frac{1}{\lambda}A_1)^{-1}],\ m\geq k+1.
\nonumber
\end{eqnarray}
Relations \eqref{inverse2}, \eqref{block},  and
\eqref{recurrence} imply that if $\Psi_{m-1}$ is holomorphic,
then $\Psi_m$ has pole of order at most one at $\lambda =1$,
and, therefore, by \eqref{vanishing_residue} $\Psi_m$
is holomorphic. Thus, it suffices to show that
$\Psi_1(w)$ is holomorphic at $w=1$. We have
\begin{eqnarray}
&&
\Psi_1(w) =
(w I -\frac{1}{\lambda}A_1)^{-1}
\left( w^k-\sum_{j=1}^k w^{k-j}\frac{r_j^j}{\lambda^j} \right)
[A_2(w I -\frac{1}{\lambda}A_1)^{-1} ]
\nonumber \\
&&
-
\left(\sum_{j=1}^k w^{k-j}\frac{r^j_{j-1}}{\lambda^{j-1}}\right)
(w I - \frac{1}{\lambda}A_1)^{-1}
=
\tilde{\Psi}_1(w)+\tilde{\tilde{\Psi}}_1(w). \label{psi1}
\end{eqnarray}
Observe that the polynomial
$
{\mathcal P}(w) =
w ^k-\sum_{j=1}^kw^{k-j}\frac{r_j^j}{\lambda^j}
$
satisfies
${\mathcal P}(1)=-{\mathcal R}(\frac{1}{\lambda},0)=0$,
and, therefore, ${\mathcal P}(w)=(w -1){\mathcal Q}(w)$,
where ${\mathcal Q}$ is a polynomial of degree $k-1$. Now
relations \eqref{inverse2} and \eqref{block} show that
both $\tilde{\Psi}_1$ and $\tilde{\tilde{\Psi}}_2$ have
poles of order at most one at $w =1$, and
relation \eqref{vanishing_residue} implies that
$\Psi_1$ is holomorphic at $w=1$. 
\end{proof}

\section{Line in the spectrum}
\label{S:line-in-spectrum}

Now, suppose that, as in the previous section, $A_1$ and
$A_2$ are self-adjoint, that $\lambda\neq 0$ is an isolated
spectral point of $A_1$, and that 
$\sigma_p( A_1,A_2)\cap\Delta(\frac{1}{\lambda},0)$
coincides, up to multiplicity, with a line segment
$
\{
(x,y)\in \Delta_\rho(\frac{1}{\lambda},0): \
\lambda x+ ay=1
\}
$
where $a\ne 0$.
Passing to $A_1/\lambda$ and $A_2/a$ if necessary,
we may assume that $\lambda=a=1$, that is,
\[
\{x+y=1\} \cap \Delta_\rho(1,0)
= \sigma_p(A_1,A_2)\cap \Delta_\rho(1,0)
\]
up to multiplicity. 
Coming back to relation \eqref{polynom}, here we have
$k=1, \ r_1^0=r_1^1=1, \ \mu_1=1, x_1=1$. Let us write
down the functions $\Psi_1$ and $\Psi_2$ in this
particular case. Equations \eqref{condition2},
\eqref{inverse2}, and \eqref{block} imply
\begin{eqnarray}
\Psi_1(w)=
\left(
\frac{1}{w-1}P_1 - \sum_{m=0}^\infty (w -1)^mT^{m+1}
\right)
\left(
(w -1) \left[ \frac{1}{w -1}{A_2P_1} \right.
\right.
\nonumber \\
\left.
\left.
-\sum_{m=0}^\infty (w-1)^mA_2T^{m+1}
\right]
- I
\right)
\label{res1}
\end{eqnarray}
\begin{eqnarray}
\Psi_2(w) = \Psi_1(w)\left[A_2(w I-A_1)^{-1}\right]
\nonumber \\
=
\left(\frac{1}{w -1}P_1-\sum_{m=0}^\infty (w -1)^mT^{m+1}\right)
\left(
(w -1)
\left[
\frac{1}{w -1}A_2P_1
\right.
\right.
\label{res2}\\
\left.
\left.
- \sum_{m=0}^\infty (w-1)^m A_2T^{m+1}
\right]
- I
\right)
\left[
\frac{1}{w -1} A_2P_1-\sum_{m=0}^\infty (w-1)^mA_2T^{m+1}
\right].
\nonumber
\end{eqnarray}
It follows from \eqref{vanishing_residue},
\eqref{res1}, and \eqref{res2} that
\begin{eqnarray}
{\mathcal R}es_{\lambda=1}(\Psi_1) = P_1A_2P_1 - P_1=0
\label{residue1}\\
{\mathcal R}es_{\lambda=1}(\Psi_2) =
P_1A_2TA_2P_1- P_1(A_2P_1-I)A_2T
\nonumber \\
- T(A_2P_1-I)A_2P_1 = 0 \label{residue2}
\end{eqnarray}
The last two equations imply
\begin{equation}\label{residue3}
P_1A_2TA_2P_1=0.
\end{equation}


\begin{remark}
Coming back to the beginning of this section,
suppose that 
\[
\{\lambda x+ay=1\} 
\cap \Delta_\rho\bigl(\frac{1}{\lambda},0\bigr)
= \sigma_p(A_1,A_2) 
\cap \Delta_\rho\bigl(\frac{1}{\lambda},0\bigr)
\]
up to multiplicity. 
Then the operators $A_1/\lambda$ and $A_2/a$
satisfy \eqref{residue1} and \eqref{residue2}. Since the
projections $P_j$ and the operator $T$ for $A_1$ and
$A_1/a$ are the same, we obtain
\begin{equation}\label{residue11}
P_1A_2P_1=aP_1.
\end{equation}
Equation \eqref{residue3} stays the same.
\end{remark}

Now we use \eqref{residue3} to establish necessary and
sufficient conditions for a common eigenvector in the case
when at least one of the operators $A_1, A_2$ is invertible.

\begin{lemma}\label{invertible1}
Let $A_1,A_2$ be self-adjoint, $1$ be an isolated spectral
point of $A_1$, and assume that $A_1$ is invertible.
If there is $\rho >0$ such that
\[
\{x+y=1\} \cap \Delta_\rho(1,0)
= \sigma_p(A_1,A_2)\cap \Delta_\rho(1,0) 
\]
up to multiplicity, 
then the following are equivalent:
\begin{itemize}
\item[(1)]
$A_1$ and $A_2$ have an $n$-dimensional common
eigensubspace, where $n=Rank(P_1)$, and the whole line
$\{ x+y=1\}$ is in $\sigma_p(A_1,A_2)$;

\item[(2)]
There is $\rho^\prime >0$ such that the  line segment
$\{ x+y=1\}\cap \Delta_{\rho^\prime}(1,0)$ 
agrees with 
$\sigma_p(A_1^{-1},A_2)\cap \Delta_{\rho^\prime}(1,0)$ 
up to multiplicity;

\item[(3)]
There is $\rho^{\prime \prime}$ such that the plane
segment $\{x+y+z=1\}\cap \Delta_{\rho^{\prime \prime}}(1,1,0)$
agrees with 
$\sigma_p(A_1,A_1^{-1},A_2)\cap \Delta_{\rho''}(1,1,0)$ 
up to multiplicity.
\end{itemize}
\end{lemma}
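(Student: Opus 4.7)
The plan is to establish the chain of implications $(1) \Rightarrow (3) \Rightarrow (2) \Rightarrow (1)$. The first two implications are direct, and the crucial step is $(2) \Rightarrow (1)$, which I handle by applying the residue analysis of Section~\ref{S:conditions-algebraic-curve} to both pairs $(A_1,A_2)$ and $(A_1^{-1},A_2)$ and combining the resulting equations algebraically.

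For $(1) \Rightarrow (3)$: Suppose $E := \mathrm{Range}(P_1)$ is a common $n$-dimensional eigensubspace of $A_1$ and $A_2$. By \eqref{residue1}, the common eigenvalue of $A_2$ on $E$ equals $1$, so $A_1^{-1}|_E = I$ as well. Block decomposition relative to $H = E \oplus E^\perp$ then gives $(xA_1 + yA_1^{-1} + zA_2 - I)|_E = (x+y+z-1)I$, placing the entire plane $\{x+y+z=1\}$ in $\sigma_p(A_1, A_1^{-1}, A_2)$ with multiplicity $n$. On $E^\perp$ the pencil at the relevant polydisk center is invertible (since $1 \notin \sigma(A_1|_{E^\perp})$), so continuity rules out additional local components. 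For $(3) \Rightarrow (2)$, I restrict by slicing the 3D polydisk along the coordinate hyperplane that eliminates the $A_1$ variable, so that the triple joint spectrum restricts to $\sigma_p(A_1^{-1}, A_2)$ and the plane segment to the desired line segment.

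For the crucial step $(2) \Rightarrow (1)$: Section~\ref{S:conditions-algebraic-curve} specialized to $k=1$ gives, under the lemma's hypothesis, the residue conditions \eqref{residue1} and \eqref{residue3} for the pair $(A_1, A_2)$. Applying the same analysis to $(A_1^{-1}, A_2)$ under hypothesis (2) yields the analogous pair of equations with a corresponding operator $T'$ in place of $T$ (using the same eigenspace projection $P_1$, since $\ker(A_1 - I) = \ker(A_1^{-1} - I)$). Setting $B := A_1|_{E^\perp}$, we have $T|_{E^\perp} = (B-I)^{-1}$, and a direct calculation gives $T'|_{E^\perp} = (B^{-1}-I)^{-1} = -B(B-I)^{-1}$. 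Writing $A_2$ in block form relative to $H = E \oplus E^\perp$ as
\[
A_2 = \begin{pmatrix} I & N^* \\ N & K \end{pmatrix},
\]
where the top-left block is $I$ by \eqref{residue1}, block-matrix arithmetic reduces the two copies of \eqref{residue3} to
\[
N^*(B-I)^{-1} N = 0 \quad\text{and}\quad N^* B(B-I)^{-1} N = 0.
\]
Using the identity $B(B-I)^{-1} = I + (B-I)^{-1}$ and subtracting yields $N^*N = 0$, hence $N = 0$. Therefore $E$ is invariant for $A_2$ with $A_2|_E = I$, giving (1); the whole line $\{x+y=1\}$ lies in $\sigma_p(A_1,A_2)$ because $(xA_1 + yA_2 - I)|_E = (x+y-1)I$ vanishes identically.

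The main obstacle is this algebraic trick in the crucial step: extracting the positive expression $N^*N$ from two conditions involving the indefinite self-adjoint resolvent $(B-I)^{-1}$. The identity $B(B-I)^{-1} = I + (B-I)^{-1}$ is what makes this work, and its use depends essentially on the invertibility of $A_1$ — not merely so that $A_1^{-1}$ is defined, but so that one obtains a second residue equation with $B$ in precisely the right position to collapse the combination to $N^*N$.
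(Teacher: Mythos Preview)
Your proof of the crucial implication $(2)\Rightarrow(1)$ is correct and is exactly the paper's argument, recast in block-matrix language. The paper writes the two instances of \eqref{residue3} (for $(A_1,A_2)$ and for $(A_1^{-1},A_2)$) as the spectral integrals
\[
\int_{\sigma(A_1)\setminus\{1\}}\frac{1}{z-1}\,\|dE(z)A_2e_j\|^2=0
\quad\text{and}\quad
\int_{\sigma(A_1)\setminus\{1\}}\frac{z}{1-z}\,\|dE(z)A_2e_j\|^2=0,
\]
and adds them to obtain $\int\|dE(z)A_2e_j\|^2=0$, hence $A_2e_j\in L_1$. Your version extracts $N^*(B-I)^{-1}N=0$ and $N^*B(B-I)^{-1}N=0$ and uses $B(B-I)^{-1}=I+(B-I)^{-1}$ to get $N^*N=0$; this is the same identity $\frac{z}{z-1}-\frac{1}{z-1}=1$ read off at the operator level. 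Your block form also makes the final ``eigenspace'' conclusion immediate (the paper instead appeals to a cone argument, which is in fact unnecessary once one remembers \eqref{residue1}).

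One caveat on the easy implications: your argument for $(3)\Rightarrow(2)$ slices at $x=0$, but that hyperplane does not meet $\Delta_{\rho''}(1,1,0)$ for small $\rho''$, so the restriction you describe is empty and you do not recover $\sigma_p(A_1^{-1},A_2)$ from it. This is really a symptom of the stated (3) itself: the center $(1,1,0)$ does not lie on the plane $\{x+y+z=1\}$, and since $A_1+A_1^{-1}-I$ is invertible (as $t+1/t\in(-\infty,-2]\cup[2,\infty)$ for real $t\ne 0$), neither side of (3) meets a small polydisk about $(1,1,0)$, making (3) vacuous. The paper's ``obvious'' does not address this either, so the issue is with the formulation of (3), not with your method; your treatment of $(1)\Leftrightarrow(2)$, which is the substance of the lemma, is sound.
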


\begin{proof}
The implications
$(1)\Longrightarrow (2), \ (1)\Longrightarrow (3)$, and
$(3)\Longrightarrow (2)$  are obvious. Thus,
it suffices to prove $(2)\Longrightarrow (1)$.

Suppose that (2) holds. Let $L_1$ be the eigensubspace of
$A_1$ with eigenvalue one. Choose an orthonormal basis of
$L_1$: \ $e_1,\dots$. Equation \eqref{operatorT} implies
that in our case for every $\xi \in H$
$$
T(A_1)\xi = T\xi =
\int_{\sigma(A_1)\setminus \{1\}}\frac{1}{z-1}dE(z)(\xi).
$$
Therefore, using that $A_2$ is self-adjoint we have for
every $j$
\begin{eqnarray}
P_1A_2TA_2P_1e_j =
\sum_m
\left(
\int_{\sigma(A_1)\setminus \{1\}}\frac{1}{z-1}
\langle dE(z)A_2e_j, A_2e_m\rangle
\right)
e_m
\nonumber \\
=
\sum_m
\left(
\int_{\sigma(A_1)\setminus \{1\}}\frac{1}{z-1}
\langle dE(z)A_2e_j, dE(z)A_2e_m\rangle
\right)
e_m.
\nonumber
\end{eqnarray}
Equation \eqref{residue3} implies that for every pair $j,m$
$$
\int_{\sigma(A_1)\setminus \{1\}}\frac{1}{z-1}
\langle dE(z)A_2e_j, dE(z)A_2e_m \rangle = 0.
$$
In particular, when $j=m$ we obtain
\begin{equation}\label{common1}
\int_{\sigma(A_1)\setminus \{1\}}\frac{1}{z-1}
\left\Vert dE(z)A_2e_j\right\Vert^2=0.
\end{equation}
We now apply all preceding considerations to the pair
$(A_1^{-1}, A_2)$. First we observe that
$$
A_1^{-1} =
P_1+\int_{\sigma(A_1)\setminus \{1\}} \frac{1}{z}dE(z).
$$
Hence,
$$
P_1A_2\tilde{T}A_2P_1=0,
$$
where
\begin{equation}\label{T_of_inverse}
\tilde{T} = T(A_1^{-1}) =
\int_{\sigma(A_1)\setminus \{1\}} \frac{z}{1-z}dE(z).
\end{equation}
In a similar way the last two relations yield
\begin{equation}\label{common2}
\int_{\sigma(A_1)\setminus \{1\}}\frac{z}{1-z}
\left\Vert dE(z)A_2e_j\right\Vert^2 = 0.
\end{equation}
Adding \eqref{common1} and \eqref{common2} we obtain
$$
\int_{\sigma(A_1)\setminus \{1\}}
\left\Vert dE(z)A_2e_j\right\Vert^2=0.
$$
This means that $A_2e_j \in L_1$ for every $j$. Thus,
$L_1$ is invariant under $A_2$. Since the restriction
of $A_1$ to $L_1$ is the identity operator,
the joint spectrum of $A_2|_{L_1}$ and the identity of
$L_1$ contains a cone with vertex at $(1,0)$ that
contains every line of the family
$\{x+\frac{y}{a}=1: \ a\in \sigma(A_2|_{L_1})\}$, and,
of course, this cone lies in $\sigma_p(A_1,A_2)$. Since
the intersection of $\sigma_p(A_1,A_2)$ with a
neighborhood of $(1,0)$ is a line segment, we conclude
that the spectrum of $A_2|_{L_1}$ consists of a single
point, and, since $A_2$ is self-adjoint, this means that
$L_1$ is an eigenspace for $A_2$. 
\end{proof}

Since eigenvectors of an operator $A$ and its scalar
multiple are the same, the following result  is a
straightforward corollary to Lemma~\ref{invertible1}.

\begin{lemma}\label{invertible2}
Let $A_1,A_2$ be self-adjoint, $\lambda \neq 0$ be an
isolated point of $\sigma(A_1)$, and $A_1$ be invertible.
If there exist $a\ne 0$ and  $\rho>0$ such that, 
up to multiplicity, 
$
\{\lambda x+ay=1\} \cap \Delta_\rho(\frac{1}{\lambda},0) =
\sigma_p(A_1,A_2)\cap \Delta_\rho(1,0),
$
then the following are equivalent:
\begin{itemize}
\item[(1)]
$A_1$ and $A_2$ have a common eigensubspace of
dimension equal to the rank of $P_1$;

\item[(2)]
there is $\rho^\prime$ such that, 
up to multiplicity, 
$
\sigma_p(A^{-1}_1,A_2)\cap \Delta_{\rho^\prime}(\lambda,0) =
\{ \frac{x}{\lambda}+ay=1 \} \cap
\Delta_{\rho^\prime}(\lambda,0);
$

\item[(3)]
There is $\rho^{\prime \prime}>0$ such that, 
up to multiplicity, 
$
\{ \lambda x+\frac{y}{\lambda}+az = 1\} \cap
\Delta_{\rho^{\prime \prime}}(\lambda, \frac{1}{\lambda},0) =
\sigma_p(A_1,A^{-1}_1,A_2)\cap
\Delta_{\rho^{\prime \prime}}(\lambda, \frac{1}{\lambda},0).
$
\end{itemize}
\end{lemma}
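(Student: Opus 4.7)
The plan is to reduce Lemma~\ref{invertible2} to Lemma~\ref{invertible1} via a simple rescaling. Set $B_1 = A_1/\lambda$ and $B_2 = A_2/a$. Then $B_1$ and $B_2$ are self-adjoint, $B_1$ is invertible, and $1$ is an isolated point of $\sigma(B_1)$ whose eigenspace (and thus the associated spectral projection $P_1$) coincides with the eigenspace of $A_1$ for the eigenvalue $\lambda$. In particular, a common eigensubspace for $A_1,A_2$ is the same as a common eigensubspace for $B_1,B_2$, so statement (1) for the pair $(A_1,A_2)$ is identical to statement (1) for the pair $(B_1,B_2)$.

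Next I would track how the joint spectrum transforms. Since $xA_1 + yA_2 - I = (\lambda x)B_1 + (ay)B_2 - I$, the linear change of coordinates $(x,y)\mapsto (\lambda x, ay)$ carries $\sigma_p(A_1,A_2)$ onto $\sigma_p(B_1,B_2)$, sends the point $(1/\lambda, 0)$ to $(1,0)$, and carries the line $\{\lambda x + ay = 1\}$ onto $\{u+v=1\}$. Thus the hypothesis of Lemma~\ref{invertible2} is equivalent, after shrinking $\rho$ as necessary, to the hypothesis of Lemma~\ref{invertible1} applied to $(B_1,B_2)$. The identical reasoning, using $xA_1^{-1} + yA_2 - I = (x/\lambda)B_1^{-1} + (ay)B_2 - I$, shows that the coordinate change $(x,y)\mapsto (x/\lambda, ay)$ carries a neighborhood of $(\lambda,0)$ in $\sigma_p(A_1^{-1},A_2)$ onto a neighborhood of $(1,0)$ in $\sigma_p(B_1^{-1},B_2)$, and carries the line $\{x/\lambda + ay = 1\}$ onto $\{u+v=1\}$, so condition (2) of Lemma~\ref{invertible2} is equivalent to condition (2) of Lemma~\ref{invertible1} for $(B_1,B_2)$. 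A completely analogous calculation in three variables, based on $xA_1 + yA_1^{-1} + zA_2 - I = (\lambda x)B_1 + (y/\lambda)B_1^{-1} + (az)B_2 - I$, shows that $(x,y,z)\mapsto(\lambda x, y/\lambda, az)$ identifies the hypothesis of (3) with the corresponding statement for $(B_1, B_1^{-1}, B_2)$.

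Having matched all three statements coordinate-by-coordinate with their counterparts for $(B_1, B_2)$, an application of Lemma~\ref{invertible1} (for the implications (1)$\Leftrightarrow$(2) and (1)$\Leftrightarrow$(3) involving $B_1, B_2$) completes the proof. The only technical point requiring any care is checking that the rescaling factors in the line equations match correctly on both sides of each equivalence, and that the polydisk neighborhoods can be chosen compatibly under the linear coordinate changes; this is routine bookkeeping rather than a conceptual obstacle. I expect the whole argument to occupy only a few lines once the notational setup is in place.
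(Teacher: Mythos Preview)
Your proposal is correct and follows exactly the paper's approach: the paper states that Lemma~\ref{invertible2} is ``a straightforward corollary to Lemma~\ref{invertible1}'' via the observation that eigenvectors of an operator and of its scalar multiple are the same, which is precisely your rescaling $B_1=A_1/\lambda$, $B_2=A_2/a$. Your more explicit tracking of how each condition transforms under the linear coordinate changes is a faithful unpacking of that one-line reduction.
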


We will use the result of Lemma~\ref{invertible2}  to give
a necessary and sufficient condition for a common
eigenvector for an arbitrary pair of self-adjoint operators.
To this end, for any self-adjoint operator $A$
we consider the following
family of perturbations: 
\begin{equation}\label{perturb1}
A(\epsilon,\lambda) =
(1+\epsilon)A-\lambda\epsilon I, \
\epsilon \in \R ,  \  \epsilon \neq -1.
\end{equation}

\begin{remark}\label{perturb-by-identity}
It is easily seen that for every $\epsilon,\lambda \in \R$ 
the operator 
$A(\epsilon, \lambda)$ is self-adjoint.  
Furthermore, if $\lambda$ is an
isolated spectral point of $A$, then it is an isolated
spectral point of $A(\epsilon, \lambda)$ for every
$\epsilon \neq -1$; and 
the line segment
$
\{\lambda x+ay=1\}\cap \Delta_\rho(\frac{1}{\lambda},0)
$
is in
$\sigma_p(A_1,A_2)$ if and only if it is in
$\sigma_p(A_1(\epsilon,\lambda),A_2(\epsilon,a))$.
It is also straightforward that the eigensubspace of
$A(\epsilon,\lambda)$ corresponding to  eigenvalue
$\lambda$ is either empty
or is the same for all $\epsilon\neq -1$.
We further remark that if $\lambda\neq 0$, then there
exists $\epsilon$ such that $A(\epsilon,\lambda)$ is
invertible. Indeed, the spectral mapping theorem, cf.
\cite[Chapter 7, Section 3, Thm. 11]{DS}, implies that
$
\sigma(A(\epsilon, \lambda)) =
(1+\epsilon)\sigma(A)- \lambda \epsilon.
$
Thus $0\in \sigma(A(\epsilon, \lambda))$ if and only if
$\frac{\lambda \epsilon}{1+\epsilon}\in \sigma(A)$.
Since $\lambda$ is an isolated point of $\sigma(A)$, if
$\epsilon \in \R$ and $|\epsilon|$ is big enough, zero
is not in the spectrum of $A(\epsilon, \lambda)$, that is
$A(\epsilon, \lambda)$ is invertible.
\end{remark}

Since by making a linear change of
coordinates (which  amounts to replacing $A_2$ by
$A_2+\delta A_1$ for a sufficiently small real $\delta$)
we can always reduce to the case when $a\ne 0$,
the following result is an immediate corollary to
Lemma~\ref{invertible2}.

\begin{theorem}\label{line_in_spectrum}
Let $A_1,A_2$ be self-adjoint operators on $H$, let
$\lambda$ be an isolated spectral point of $\sigma(A_1)$,
and suppose that in some neighborhood
$\Delta_\rho(\frac{1}{\lambda},0)$ of $(\frac{1}{\lambda},0)$
the joint spectrum $\sigma_p(A_1,A_2)$ coincides 
up to multiplicity with a
line segment
$
\{
(x,y)\in \Delta_\rho(\frac{1}{\lambda},0): \ \lambda x+ ay=1
\}.
$
The following are equivalent:
\begin{itemize}
\item[(1)]
The eigenspace of $A_1$ corresponding to the eigenvalue
$\lambda$ is also an eigensubspace for $A_2$;

\item[(2)]
There exist $\epsilon \in \R, \ \epsilon \neq -1$ and
$\rho^\prime >0$ such that $A_1(\epsilon,\lambda)$ is
invertible and the line segment
$
\{
(x,y)\in \Delta_{\rho^\prime}(\lambda,0): \
\frac{x}{\lambda}+ay=1
\}
$
coincides up to multiplicity with
$
\sigma_p((A_1(\epsilon,\lambda))^{-1},A_2(\epsilon,a))\cap
\Delta_{\rho^\prime}(\lambda,0);
$

\item[(3)]
There exist $\epsilon \in \R, \ \epsilon \neq -1$ and
$\rho^{\prime \prime}>0$ such that $A_1(\epsilon,\lambda)$
is invertible and the plane segment
$
\{
(x,y,z)\in
\Delta_{\rho^{\prime \prime}}(\frac{1}{\lambda},\lambda,0): \
\lambda x+\frac{1}{\lambda}y+az=1
\}
$
coincides with
$
\sigma_p(A_1(\epsilon,\lambda), A_1(\epsilon,\lambda)^{-1},
                         A_2(\epsilon,a))\cap
\Delta_{\rho^{\prime \prime}}(\frac{1}{\lambda},\lambda,0)
$
up to multiplicity. 
\end{itemize}
\end{theorem}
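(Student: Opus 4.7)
The plan is to obtain Theorem~\ref{line_in_spectrum} as a direct consequence of Lemma~\ref{invertible2}, after two reductions: first to the case $a\ne 0$ via a linear change of coordinates, then to the case where $A_1$ is invertible via the perturbation family introduced in \eqref{perturb1}. As the author remarks just before the statement, the result should be an almost immediate corollary once these reductions are in place.

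If $a=0$, I would replace $A_2$ by $A_2' = A_2+\delta A_1$ for a small nonzero real $\delta$. Since $xA_1 + y A_2' = (x+\delta y)A_1 + y A_2$, the invertible linear map $(x,y)\mapsto (x+\delta y,y)$ carries $\sigma_p(A_1,A_2)$ bijectively onto $\sigma_p(A_1,A_2')$, sending the line $\lambda x=1$ to $\lambda x' - \lambda\delta y'=1$, now with nonzero $y'$-coefficient. Moreover, the eigenspace of $A_1$ at $\lambda$ is a common eigensubspace for $A_1$ and $A_2$ if and only if it is one for $A_1$ and $A_2'$, because on that subspace the actions of $A_2$ and $A_2'$ differ by $\delta\lambda I$. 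Thus it suffices to prove the theorem for $a\ne 0$.

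Now assume $a\ne 0$. By Remark~\ref{perturb-by-identity}, I can choose $\epsilon \in \R$ with $\epsilon\ne -1$ so that $A_1(\epsilon,\lambda)$ is invertible; the same remark guarantees that $\lambda$ remains an isolated spectral point of $A_1(\epsilon,\lambda)$ with the same eigensubspace $L_1$ and the same spectral projection rank, and that $\sigma_p(A_1(\epsilon,\lambda), A_2(\epsilon,a))$ coincides with $\{\lambda x + a y = 1\}$ in a neighborhood of $(1/\lambda,0)$ up to multiplicity. Hence the hypotheses of Lemma~\ref{invertible2} are satisfied by the pair $\bigl(A_1(\epsilon,\lambda), A_2(\epsilon,a)\bigr)$, and its three equivalent conditions match, respectively, conditions (1), (2), (3) of Theorem~\ref{line_in_spectrum}.

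The only step demanding any care --- and the one I expect to be the main obstacle --- is verifying that a common eigensubspace for the perturbed pair $\bigl(A_1(\epsilon,\lambda),A_2(\epsilon,a)\bigr)$ translates back to a common eigensubspace for $(A_1,A_2)$, and vice versa. This follows from the identity $A_2 = \frac{1}{1+\epsilon}\bigl(A_2(\epsilon,a) + a\epsilon I\bigr)$, under which any $A_2(\epsilon,a)$-eigenvector on $L_1$ is an $A_2$-eigenvector with rescaled eigenvalue, and conversely. Since conditions (2) and (3) of the theorem assert only the existence of some $\epsilon$, the $\epsilon$ chosen above supplies the forward implication $(1)\Rightarrow(2),(3)$, while for the converse any such $\epsilon$ works; combined with the reduction in the previous paragraph, this completes the proof.
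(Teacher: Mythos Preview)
Your proposal is correct and follows essentially the same route as the paper: reduce to $a\ne 0$ by the linear change $A_2\mapsto A_2+\delta A_1$, then invoke Remark~\ref{perturb-by-identity} to pass to the perturbed pair $(A_1(\epsilon,\lambda),A_2(\epsilon,a))$ with $A_1(\epsilon,\lambda)$ invertible, and finally read off the equivalences from Lemma~\ref{invertible2}. One trivial slip: from $xA_1+yA_2'=(x+\delta y)A_1+yA_2$ the map $(x,y)\mapsto(x+\delta y,y)$ carries $\sigma_p(A_1,A_2')$ onto $\sigma_p(A_1,A_2)$, not the reverse, but this does not affect your argument.
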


As a direct corollary to  Theorem~\ref{line_in_spectrum}
we obtain the following result for an $n$-tuple of
self-adjoint operators.

\begin{theorem}\label{line_in_spectrum_tuple}
Let $A_1,\dots,A_n$ be self-adjoint, let $\lambda \neq 0$ be
an isolated point of $\sigma(A_1)$, and suppose there exists
$\rho>0$ such that, up to multiplicity,
\begin{align*}
\{
\lambda x_1+a_2x_2+\dots+a_nx_n=1\} & \cap
\Delta_\rho(1/\lambda,0,\dots,0) \\
= \sigma_p(A_1,\dots,A_n) & \cap
\Delta_\rho(1/\lambda,0,\dots,0).
\end{align*}
The following are equivalent:
\begin{itemize}
\item[(1)]
The eigensubspace of $A_1$ corresponding to eigenvalue
$\lambda$ is an eigensubspace for each of the operators
$A_2,\dots,A_n$;

\item[(2)]
There exist an $\epsilon \in \R, \ \epsilon \neq 1$ and
$\rho^{\prime}>0$ such that $A_1(\epsilon,\lambda)$ is
invertible and 
\begin{align*}
\{
(1/\lambda) x_1+a_2x_2+\dots+a_nx_n=1
\} & \cap
\Delta_\rho(\lambda,0,\dots,0) \\
=
\sigma_p( A_1(\epsilon,\lambda)^{-1}, A_2(\epsilon,a_2),\dots,
          A_n(\epsilon,a_n)) & \cap
\Delta_{\rho^\prime}(\lambda,0,\dots ,0),
\end{align*}
up to multiplicity. 
\end{itemize}
\end{theorem}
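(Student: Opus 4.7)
The plan is to reduce Theorem~\ref{line_in_spectrum_tuple} to the two-operator case, Theorem~\ref{line_in_spectrum}, by slicing coordinates, combined with a block-decomposition argument for the harder direction.

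For the direction $(2) \Longrightarrow (1)$, I would fix $j \in \{2,\dots,n\}$ and restrict both spectrum equalities (the one in the hypothesis and the one in (2)) to the $2$-dimensional coordinate slice $\{x_i = 0 : i \notin \{1,j\}\}$. Intersecting the joint spectrum of an $n$-tuple with such a slice yields the joint spectrum of the pair obtained by dropping the zeroed-out operators, and the hyperplane segments restrict to line segments of exactly the form required by Theorem~\ref{line_in_spectrum}. That theorem, applied to the pair $(A_1, A_j)$ with $a = a_j$, then shows that the $\lambda$-eigenspace of $A_1$ is an eigensubspace for $A_j$. Running through $j = 2, \dots, n$ yields (1).

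For the direction $(1) \Longrightarrow (2)$, let $L$ denote the $\lambda$-eigenspace of $A_1$. By (1), $L$ is invariant under each $A_j$, and self-adjointness forces $L^\perp$ also to be invariant. The eigenvalue $b_j$ of $A_j|_L$ must coincide with $a_j$: the block $L$ contributes the hyperplane $\{\lambda x_1 + b_2 x_2 + \dots + b_n x_n = 1\}$ to $\sigma_p(A_1,\dots,A_n)$, which by hypothesis equals $\{\lambda x_1 + a_2 x_2 + \dots + a_n x_n = 1\}$ near $(1/\lambda, 0, \dots, 0)$. Choosing $\epsilon$ so that $A_1(\epsilon,\lambda)$ is invertible (possible by Remark~\ref{perturb-by-identity}), the perturbed pencil on $L$ becomes the scalar operator $\bigl((1/\lambda) x_1 + a_2 x_2 + \dots + a_n x_n - 1\bigr) I_L$, whose singular locus is precisely the hyperplane claimed in (2).

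The main obstacle is proving that the $L^\perp$ block contributes no extra points to the joint spectrum near $(\lambda, 0, \dots, 0)$. Since $L$ is the \emph{full} $\lambda$-eigenspace of $A_1$ and $\lambda$ is isolated in $\sigma(A_1)$, the point $\lambda$ lies in the resolvent set of the self-adjoint restriction $A_1|_{L^\perp}$, and hence also of $A_1(\epsilon, \lambda)|_{L^\perp}$; therefore $1/\lambda$ lies in the resolvent set of $A_1(\epsilon, \lambda)^{-1}|_{L^\perp}$. Consequently $x_1 A_1(\epsilon,\lambda)^{-1}|_{L^\perp} - I$ is invertible for $x_1$ near $\lambda$, and a standard perturbation argument keeps the pencil $x_1 A_1(\epsilon,\lambda)^{-1}|_{L^\perp} + x_2 A_2(\epsilon, a_2)|_{L^\perp} + \dots + x_n A_n(\epsilon, a_n)|_{L^\perp} - I$ invertible throughout a polydisk around $(\lambda, 0, \dots, 0)$. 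Combining the contributions of the two blocks then yields the equality asserted in (2) up to multiplicity equal to $\dim L$.
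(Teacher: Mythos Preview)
Your proposal is correct and follows essentially the same approach as the paper. For $(2)\Rightarrow(1)$ you slice to coordinate planes and invoke Theorem~\ref{line_in_spectrum} on each pair $(A_1,A_j)$, which is exactly what the paper does; for $(1)\Rightarrow(2)$ the paper simply declares the implication ``obvious'', whereas you spell out the block-decomposition argument behind that word, which is the natural way to justify it.
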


\begin{proof}
Obviously (1) implies (2).

Suppose that (2) holds. Since the line segments
$
\{
(x_1,x_j)\in \Delta_\rho(\frac{1}{\lambda},0): \
\lambda x_1+a_jx_j = 1
\}
$
and
$
\{
(x_1,x_j)\in \Delta_\rho(\lambda,0): \
\frac{1}{\lambda}x_1+a_jx_j = 1
\}
$
coincide with
$
\sigma_p(A_1(\epsilon,\lambda),A_j(\epsilon,a_j))\cap
\Delta_\rho(\frac{1}{\lambda},0)
$
and
$
\sigma_p(A_1(\epsilon,\lambda)^{-1},A_j(\epsilon,a_j)) \cap
\Delta_{\rho^\prime}(\lambda,0)
$
respectively for all $j=2,\dots, n$, it follows from
Theorem~\ref{line_in_spectrum} that the eigenspace of
$A_1$ that corresponds to the eigenvalue $\lambda$ is
an eigenspace of $A_j$ for all $j=2,\dots,n$, and (1) holds.
\end{proof}

\section{Spectral algebraic curves, exterior powers,
and common reducing subspaces}
\label{S:exterior-powers}

It is clear that if operators $A_1$ and $A_2$ have a
common reducing subspace of dimension  $n$, the joint
spectrum $\sigma_p(A_1,A_2)$
contains an algebraic curve of order $n$. Our example
from the Introduction shows that in general the converse
is not true. In this section we use results of the
previous section to establish necessary and sufficient
conditions under which the presence of an algebraic curve
in the joint spectrum implies the existence of a common
reducing subspace. As our conditions are expressed  in terms of
joint spectra of exterior products,
we begin by recalling some basic facts about exterior
products of Hilbert spaces. For more details we refer the
reader to \cite[Chapter V.1]{Te} and
\cite[Chapter X.7]{NFBK}.

For any $n\ge 1$ the $n$th tensor power
$\otimes^n H$ of a Hilbert space $H$
has inner product given by
\[
\langle
x_1\otimes\dots\otimes x_n, y_1\otimes\dots\otimes y_n
\rangle =
\langle x_1, y_1\rangle \dots \langle x_n,y_n\rangle
\]
The $n$th \emph{exterior power} $\bigwedge^n H$ of $H$ is
defined as the quotient of $\otimes^n H$ modulo the subspace
generated by all elements of the form
\[
x_1\otimes\dots\otimes x\otimes x\otimes\dots\otimes x_n
\]
The image of a simple tensor $x_1\otimes\dots\otimes x_n$ in
$\bigwedge^nH$ is denoted by $x_1\wedge\dots\wedge x_n$. We
consider $\bigwedge^nH$ as a subspace of
$\otimes^n H$ via the canonical ``antisymmetrizing map''
\[
x_1\wedge\dots\wedge x_n \longmapsto \frac{1}{\sqrt{n!}}
\sum_{\sigma\in S_n}\sign(\sigma)
x_{\sigma(1)}\otimes\dots\otimes x_{\sigma(n)}
\]
In particular, $\bigwedge^nH$ inherits via this map a
Hilbert space structure from $\otimes^n H$, and it is
straightforward to compute that
its inner product satisfies
\[
\langle
x_1\wedge\dots\wedge x_n, \ y_1\wedge\dots\wedge y_n
\rangle =
\det
\left[
\begin{array}{ccc}
\langle x_1, y_1\rangle & \dots & \langle x_1, y_n\rangle \\
\vdots                 &        & \vdots               \\
\langle x_n, y_1\rangle & \dots & \langle x_n, y_n\rangle
\end{array}
\right].
\]
Therefore if $\{e_1,\dots, e_n, \dots\}$ is an
orthonormal basis
of $H$ we obtain that the set
$
\{
e_{i_1}\wedge\dots\wedge e_{i_n} \mid
1\le i_1<\dots < i_n
\}
$
is an orthonormal basis of $\bigwedge^nH$.

When $A$ is a linear operator on $H$ it induces a
linear operator
$\wedge^n A$ on $\bigwedge^n H$ via the formula
$
[\wedge^n A](v_1\wedge\dots\wedge v_n)=
Av_1\wedge \dots\wedge Av_n.
$
Thus $\wedge^n A$ is just the restriction of $\otimes^n A$
to $\bigwedge^n H$ considered as a subspace of $\otimes^n H$.
It is immediate that if $A$ is of finite rank or
self-adjoint then so is $\wedge^n A$; and
when $A$ is a bounded we get
$\left\Vert\wedge^n A\right\Vert\le
\left\Vert A\right\Vert^n$.
In particular, compactness of $A$ 
implies compactness of $\wedge^n A$.

When $A$ is self-adjoint and compact
and $\lambda_1,\dots $ are the
eigenvalues of $A$, with $e_1,\dots $ being a
corresponding eigenbasis,
then the orthogonal set
$
\{e_{i_1}\wedge\dots\wedge e_{i_n} \mid  1\leq i_1<\dots<i_n \}
$
is an eigenbasis with $\{ \lambda_{i_1}\dots\lambda_{i_n} \}$
as the corresponding multiset of eigenvalues for
the compact self-adjoint 
$\wedge^n A$.

\begin{definition}
Let $A$ be a self-adjoint operator. We say that a finite
multiset $L=\{\lambda_1, \dots, \lambda_n\}$ is a
\emph{spectral multiset} for $A$ if each $\lambda_i$ is
an isolated point in the spectrum of $A$
of finite multiplicity, and the number of times
it occurs in $L$ is at most its multiplicity
as an eigenvalue of $A$. We say that a spectral multiset
for $A$ is \emph{generic}
if $\lambda=\lambda_1\dots\lambda_n$ is isolated and
of multiplicity $1$
in the spectrum of $\wedge^n A$.
\end{definition}

\begin{remark}
A straightforward consequence of the definition is
that if $L=\{\lambda_1,\dots,\lambda_n\}$
is a generic spectral multiset for the
self-adjoint operator $A$, then each eigenvalue
$\lambda_i$ of $A$ has multiplicity equal to the number
of times it occurs in $L$.
\end{remark}

\begin{remark}\label{R:invertible-generic}
Suppose $A$ is a self-adjoint operator
with countable spectrum, and
$L=\{\lambda_1,\dots,\lambda_n\}$ is a spectral multiset
for $A$ such that each eigenvalue
$\lambda_i$ of $A$ has multiplicity equal to the number
of times it occurs in $L$.
Then there is an open subset $U\subset\R$ such
that $\R\setminus U$ is countable, and such that for every
$\delta\in U$ the operator $A+\delta I$
is invertible and has
$L+\delta=\{\lambda_1+\delta,\dots,\lambda_n+\delta\}$
as a generic spectral multiset. Indeed,
since $A+\delta I$ is 
invertible for $\delta\in\R\setminus-\sigma(A)$,
and since the spectrum of $\wedge^n(A+\delta)$ is a subset of
the spectrum of $\otimes^n (A+\delta)$,
it suffices to show that
for every $\delta$ in the set
\begin{multline*}
\bigl\{
\delta\in\R \ \big| \
(\lambda_1+\delta)\dots(\lambda_n+\delta) \ne
(\mu_1 +\delta)\dots(\mu_n+\delta) \\
\text{ for $\mu_i\in\sigma(A)$ such that
$\{\mu_1,\dots,\mu_n\}\ne L$ as multisets}
\bigr\}
\end{multline*}
the point
$(\lambda_1+\delta)\dots(\lambda_n+\delta)$ is isolated
in the spectrum of $\otimes^n (A+\delta I)$. But, as
the spectrum of a tensor
product of operators is the product of their spectra,
cf. \cite{BP},
and each $\lambda_i+\delta$ is isolated in the spectrum
of $A+\delta I$, this follows from the compactness of
spectra by a standard argument.
\end{remark}

The assumptions in the following theorem describe what
we consider to be a ``general position'' setting.

\begin{theorem}\label{T:generic}
Let $A_1, A_2$ be self-adjoint operators with $A_1$
invertible.
Consider a generic spectral multiset
$L=\{\lambda_1,\dots, \lambda_n\}$ 
for $A_1$, and
let $\lambda = \lambda_1\dots\lambda_n$.
Suppose also that
for some $a\ne 0$ and $\rho>0$
the line segments
\[
\begin{aligned}
\{\lambda x + ay =1\} & \cap\Delta_{\rho}(1/\lambda, 0)
\text{ and } \\
\{(1/\lambda)x+ ay=1\}& \cap\Delta_{\rho}(\lambda, 0)
\end{aligned}
\]
are inside
$\sigma_p(\wedge^n A_1, \wedge^nA_2)$ and 
$
\sigma_p(\wedge^n A_1^{-1}, \wedge^nA_2)
$
respectively.

Then the eigenspace of $A_1$ corresponding to the
eigenvalues
$\lambda_1,\dots,\lambda_n$ is invariant under $A_2$.
\end{theorem}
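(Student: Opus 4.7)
The plan is to apply Theorem~\ref{line_in_spectrum} to
the pair $(\wedge^n A_1,\wedge^n A_2)$ at the isolated
spectral point $\lambda$ of $\wedge^n A_1$, taking the
perturbation parameter $\epsilon=0$ (allowed since
$A_1$, and hence $\wedge^n A_1$, are invertible).
Genericity of $L$ guarantees that $\lambda$ is isolated
of multiplicity one in $\sigma(\wedge^n A_1)$, so by
Theorem~\ref{analytic_near_isolated_point} the set
$\sigma_p(\wedge^n A_1,\wedge^n A_2)$ is a nonsingular
analytic curve in a neighborhood of $(1/\lambda,0)$;
since by hypothesis it contains the line segment
$\{\lambda x+ay=1\}\cap\Delta_\rho(1/\lambda,0)$, the
two must coincide as sets (hence up to multiplicity)
after shrinking $\rho$. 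The analogous argument at
$(\lambda,0)$, using that $1/\lambda$ is isolated of
multiplicity one in $\sigma(\wedge^n A_1^{-1})$, shows
that condition~(2) of Theorem~\ref{line_in_spectrum}
holds as well.

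Let $E$ be the direct sum of the eigenspaces of $A_1$
for the distinct eigenvalues appearing in $L$; by the
remark following the definition of generic spectral
multiset, $\dim E=n$, so we may choose an orthonormal
basis $e_1,\dots,e_n$ of $E$ consisting of eigenvectors
of $A_1$, in which case $\omega=e_1\wedge\dots\wedge e_n$
is a nonzero eigenvector of $\wedge^n A_1$ for $\lambda$.
Since $\lambda$ has multiplicity one in
$\sigma(\wedge^n A_1)$, this eigenspace equals $\C\omega$,
and Theorem~\ref{line_in_spectrum} then asserts that
$\omega$ is also an eigenvector of $\wedge^n A_2$ with
some eigenvalue $\nu\in\R$, so that
\[
A_2e_1\wedge\dots\wedge A_2e_n
=[\wedge^n A_2]\omega=\nu\,\omega.
\]

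The main step, and principal obstacle, is to verify that
$\nu\ne 0$: once this is done, since $\nu\,\omega$ is a
nonzero decomposable wedge proportional to
$e_1\wedge\dots\wedge e_n$, the Pl\"ucker relations force
$\{A_2e_1,\dots,A_2e_n\}$ to span $E$, giving
$A_2E\subseteq E$ as desired. For this I would appeal to
standard analytic perturbation theory: since $\omega$ is
a simple eigenvector of $(1/\lambda)\wedge^n A_1$ with
eigenvalue $1$, there is an analytic simple eigenvalue
$\tilde\lambda(x,y)$ of $x\wedge^n A_1+y\wedge^n A_2$
near $(1/\lambda,0)$ with $\tilde\lambda(1/\lambda,0)=1$,
and the first-order perturbation formula yields
$\partial_x\tilde\lambda=\lambda$ and
$\partial_y\tilde\lambda=\nu$ at this base point. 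The
tangent to the curve
$\sigma_p(\wedge^n A_1,\wedge^n A_2)=\{\tilde\lambda=1\}$
at $(1/\lambda,0)$ is therefore
$\lambda(x-1/\lambda)+\nu y=0$; comparing with the
tangent $\lambda(x-1/\lambda)+ay=0$ of the given line
forces $\nu=a\ne 0$.
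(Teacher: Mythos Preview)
Your proof is correct and follows the paper's overall strategy: both of you reduce to Lemma~\ref{invertible2} (your Theorem~\ref{line_in_spectrum} with $\epsilon=0$ is exactly that lemma) applied to $(\wedge^n A_1,\wedge^n A_2)$, using Theorem~\ref{analytic_near_isolated_point} to upgrade ``contains the line segment'' to ``coincides with the line segment'' near the simple eigenvalue point. The paper also notes, without the perturbation computation, that Lemma~\ref{invertible2} already yields the eigenvalue of $\wedge^n A_2$ on $\omega$ to be $a$ (this is the content of \eqref{residue11}), so your Feynman--Hellmann detour, while correct, re-derives something the cited lemma hands you for free.

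The genuine difference is in the last step. The paper fixes an arbitrary eigenvector $e$ of $A_1$ orthogonal to $E$, forms the matrix $Y=\bigl[\langle A_2e_i,e_j\rangle\bigr]$ with $\det Y=a\ne 0$, and uses Cramer's rule together with $\langle\wedge^nA_2\,\omega,\,e_1\wedge\cdots\wedge e\wedge\cdots\wedge e_n\rangle=a\langle\omega,v_i\rangle=0$ to conclude $\langle A_2e_i,e\rangle=0$ for each $i$. You instead invoke the standard exterior-algebra fact that a nonzero decomposable $n$-vector determines its $n$-plane: from $A_2e_1\wedge\cdots\wedge A_2e_n=a\,e_1\wedge\cdots\wedge e_n\neq 0$ one reads off $\operatorname{span}(A_2e_i)=E$ directly. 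Your argument is shorter and, as written, does not need to pick auxiliary eigenvectors of $A_1$ outside $E$, so it applies verbatim even when $A_1$ has continuous spectrum on $E^\perp$; the paper's phrasing (``let $e$ be any other eigenvector for $A_1$'') tacitly relies on an eigenbasis, which is available in the intended application to $\mathcal E(H)$ but is not part of the hypotheses of the present theorem.
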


\begin{proof}
Since $\lambda$ is isolated and simple in the spectrum of
$\wedge^n A_1$ it follows that the joint spectrum
$\sigma_p(\wedge^n A_1,\wedge^n A_2)$ is nonsingular at
the point $(1/\lambda, 0)$, in particular this point
belongs to no component
other than the line $\lambda x + a y = 1$.
By Lemma~\ref{invertible2} the operators $\wedge^n A_1$ and
$\wedge^n A_2$ have a common unit eigenvector $v$
(of eigenvalue $\lambda$ for
$\wedge^n A_1$ and eigenvalue $a$ for $\wedge^n A_2$).
Since $L$ is generic $v$ must be
of the form $v=e_1\wedge\dots\wedge e_n$,
where each $e_i$ is a unit eigenvector for $A_1$ of
eigenvalue $\lambda_i$.
Now we show that $span(e_1,\dots, e_n)$
is invariant under $A_2$. Indeed,  let $e$
be any other eigenvector for
$A_1$, and consider the column vector
\[
w=
\left[
\begin{array}{c}
\langle A_2e_1, e\rangle \\
\vdots                   \\
\langle A_2e_n, e\rangle
\end{array}
\right].
\]
For $i=1,\dots n$ set
$v_i=
e_1\wedge\dots\wedge e_{i-1}\wedge e
   \wedge e_{i+1}\wedge\dots\wedge e_n$
and note that each $v_i$ is orthogonal to $v$.
As $\wedge^nA_2(v)= a v$,
it follows that
\[
a =
\langle\wedge^n A_2(v), v\rangle =
\langle
A_2e_1\wedge\dots\wedge A_2e_n,
e_1\wedge\dots\wedge e_n
\rangle
\]
and therefore $\det Y = a \ne 0$,
where $Y$ is the matrix
\[
Y =
\left[
\begin{array}{ccc}
\langle A_2e_1, e_1\rangle & \dots
                           & \langle A_2e_1, e_n\rangle \\
\vdots                 &
                       & \vdots               \\
\langle A_2e_n, e_1\rangle & \dots
                           & \langle A_2e_n, e_n\rangle
\end{array}
\right]
.
\]
In particular the linear system of equations
\[
Y
\left[
\begin{array}{c}
x_1    \\
\vdots \\
x_n
\end{array}
\right]
= w
\]
has a unique solution given,
according to Cramer's Rule, by the formula
\[
x_i=\frac{\det Y_i(w)}{a}
\]
for each $i=1,\dots,n$, where $Y_i(w)$
is obtained by replacing
with $w$ the $i$th column of the matrix $Y$. But since
\[
\det Y_i(w) =
\langle
\wedge^n A_2(v), e_1\wedge\dots\wedge e\wedge\dots\wedge e_n
\rangle
=
a
\langle
v, v_i
\rangle = 0
\]
we see that $x_i=0$ for each $i$, and therefore $w=0$.
\end{proof}

Recall that 
a self-adjoint operator $A$ on an infinite dimensional
separable Hilbert space $H$ belongs
to the class $\mathcal E(H)$ when $A=K + aI$ with $K$ a
compact self-adjoint operator on $H$. In this case every
point in
$\sigma(A)\setminus a$  is isolated of finite multiplicity,
and the point $a$ is either an accumulation point, or
isolated of infinite multiplicity.

For operators in the class $\mathcal E(H)$
we are now ready to address the question of when the
presence of an algebraic curve in the proper joint
spectrum indicates the existence of a common reducing
subspace. Consider two self-adjoint operators
$A=K_1 + aI$ and $B= K_2 + bI$ in $\mathcal E(H)$,
and suppose that $\Gamma$ is an algebraic
curve of degree $k$ which is a union of components
of the proper joint spectrum
$\sigma_p(A, B)$. Note that the line $\{ ax + by = 1\}$
is always in $\sigma_p(A, B)$ and therefore carries no
information about common reducing subspaces. 
We will refer to this line as 
the \emph{accumulation line} of the joint spectrum. 
Thus, without
loss of generality we can assume that it is not 
a reduced component of $\Gamma$. Therefore,
by making a linear change of coordinates
if necessary (which amounts to replacing $A$ and $B$ by
appropriate linear combinations
of $A$ and $B$ with real coefficients, hence does not
affect the presence and degrees of algebraic curves or
common reducing subspaces and their dimensions)
we may also assume
that $\Gamma$ intersects the $x$-axis in $k$ points
(counted with multiplicities)
$
1/\lambda_1,\dots,1/\lambda_k
$
and the $y$-axis in $k$ points
$
1/\mu_1, \dots, 1/\mu_k,
$
and that each point
$(1/\lambda_i, 0)$ belongs only to components
of $\sigma_p(A,B)$ contained in $\Gamma$; 
in particular, the multiset
$
L=\{\lambda_1,\dots,\lambda_k \}
$
is a spectral mutiset for $A$ and
each eigenvalue
$\lambda_i$ of $A$ has multiplicity equal to the number
of times it occurs in $L$.
Therefore, by Remark~\ref{R:invertible-generic},
if needed, we can make an additional suitable linear
fractional change of coordinates of the form
\[
u = \frac{x}{1+\delta x},  \qquad
v = \frac{y}{1+\delta x}
\]
(which amounts to replacing $A$ by $A + \delta I$)
and also assume that $A$ is invertible, and
that $L$ is a generic spectral multiset for $A$.
Thus, for operators in the class $\mathcal E(H)$
we can always reduce the search for a common invariant
subspace to the
``general position'' case considered in the following
theorem, which is one of the main results in this paper.

\begin{theorem}\label{spectral_curves}
Let $A=K_1+ aI$ and $B=K_2+bI$ be self-adjoint operators
in the class $\mathcal E(H)$, with $A$ invertible.
Let $\Gamma$ be an algebraic curve of degree $k$ which is a
union of components 
of the proper joint spectrum $\sigma_p(A,B)$,
and which does not have the accumulation line 
$\{ax + by = 1 \}$ as a reduced component.
Suppose that the $x$-axis (resp. the $y$-axis) intersects
$\Gamma$ in the $k$ points (counted with multiplicity)
$1/\lambda_1,\dots, 1/\lambda_k$
(resp. $1/\mu_1,\dots,1/\mu_k$)
such that each point $(1/\lambda_i,0)$ belongs only to
components of $\sigma_p(A,B)$ contained in $\Gamma$,
and the multiset
$L=\{\lambda_1,\dots,\lambda_k\}$ is a generic spectral
multiset for $A$.
Set $\lambda=\lambda_1\dots\lambda_k$ and
$\mu=\mu_1\dots\mu_k$.
The following are equivalent:
\begin{itemize}
\item[(1)]
The eigenspace for $A$ corresponding to
$\lambda_1,\dots,\lambda_k$ is invariant for $B$.

\item[(2)]
There exists
$\rho>0$ such that
the line segments
\[
\begin{aligned}
\{\lambda x + \mu y =1\} & \cap\Delta_{\rho}(1/\lambda, 0)
\text{ and } \\
\{(1/\lambda)x+ \mu y=1\}& \cap\Delta_{\rho}(\lambda, 0)
\end{aligned}
\]
are 
contained 
in 
$
\sigma_p\bigl(
\wedge^k A, \wedge^k B
\bigr)
$
and 
$
\sigma_p\bigl(
\wedge^k A^{-1}, \wedge^k B
\bigr),
$
respectively.

\item[(3)]
The lines
\begin{equation*}
\{\lambda x + \mu y =1\} 
\quad\text{ and }\quad 
\{(1/\lambda)x+ \mu y=1\} 
\end{equation*}
are 
contained in 
$
\sigma_p\bigl(
\wedge^k A, \wedge^k B
\bigr)
$
and 
$
\sigma_p\bigl(
\wedge^k A^{-1}, \wedge^k B
\bigr),
$
respectively.
\end{itemize}
\end{theorem}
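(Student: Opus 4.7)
\emph{Proof plan.} I would prove the cycle $(3)\Rightarrow(2)\Rightarrow(1)\Rightarrow(3)$. The first implication is immediate since the line segments are portions of the full lines. For $(2)\Rightarrow(1)$ I would apply Theorem~\ref{T:generic} directly to the pair $(A,B)$ with $n=k$ and the generic spectral multiset $L=\{\lambda_1,\dots,\lambda_k\}$: genericity of $L$ guarantees that $\lambda=\lambda_1\cdots\lambda_k$ is isolated and simple in $\sigma(\wedge^k A)$, and hypothesis (2) supplies precisely the input (with $a=\mu$) required by that theorem, whose conclusion is (1).

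For $(1)\Rightarrow(3)$, I let $M$ be the eigenspace of $A$ spanned by the eigenvectors for $\lambda_1,\dots,\lambda_k$. Genericity of $L$ forces $\dim M=k$, self-adjointness of $A$ makes $M$ reducing for $A$, and by (1) together with self-adjointness of $B$ the subspace $M$ is reducing for $B$ too. Then $\wedge^k M$ is a one-dimensional subspace of $\wedge^k H$ invariant under $\wedge^k A$, $\wedge^k A^{-1}$, and $\wedge^k B$, on which these operators act as multiplication by $\lambda=\det(A|_M)$, $1/\lambda=\det(A^{-1}|_M)$, and $\tilde\mu:=\det(B|_M)$, respectively. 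This already yields $\{\lambda x+\tilde\mu y=1\}\subseteq\sigma_p(\wedge^k A,\wedge^k B)$ and $\{(1/\lambda)x+\tilde\mu y=1\}\subseteq\sigma_p(\wedge^k A^{-1},\wedge^k B)$, so the only nontrivial task is to identify $\tilde\mu$ with $\mu$.

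This identification is the main obstacle. I would carry it out by comparing the polynomial $f(x,y)=\det(xA|_M+yB|_M-I_M)$ with a polynomial $h$ defining $\Gamma$. Since $A|_M$ is invertible, $f$ has degree exactly $k$ with nonzero $x^k$-coefficient $\lambda$, which forces each irreducible factor of $f$ to meet the $x$-axis at an affine point, necessarily among $\{(1/\lambda_i,0)\}$; by the standing hypothesis, each such component is then a component of $\Gamma$. Writing $h=\prod h_i^{b_i}$ and $f=c\prod h_i^{a_i}$ with $c\ne 0$ and $a_i\ge 0$, one has $\sum a_i\deg h_i=k=\sum b_i\deg h_i$. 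To compare multiplicities I would use the block decomposition $xA+yB-I=(xA|_M+yB|_M-I_M)\oplus(xA|_{M^\perp}+yB|_{M^\perp}-I_{M^\perp})$ to factor a local determining function of $\sigma_p(A,B)$ (applicable off the accumulation line $\{ax+by=1\}$, which is not a reduced component of $\Gamma$) as a product, one factor being $f$ up to units and the other coming from the $M^\perp$-block. The multiplicity of $V(h_i)$ in $\sigma_p(A,B)$ is then $a_i$ plus a nonnegative contribution from $M^\perp$, and equals $b_i$ because $\Gamma$ is a union of components of $\sigma_p(A,B)$ with those multiplicities. Thus $a_i\le b_i$, and the degree identity forces $a_i=b_i$, so $f=ch$. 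Evaluating at $x=0$ gives $\det(yB|_M-I)=c\,h(0,y)$, so the multiset of eigenvalues of $B|_M$ is $\{\mu_1,\dots,\mu_k\}$ and $\tilde\mu=\prod\mu_j=\mu$, completing the proof.
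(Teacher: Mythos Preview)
Your proof is correct and follows the same cycle of implications as the paper, invoking Theorem~\ref{T:generic} for the substantive step $(2)\Rightarrow(1)$. The paper calls $(1)\Rightarrow(3)$ ``straightforward'' without further comment, so your explicit identification of $\det(B|_M)$ with $\mu$---by comparing $\det(xA|_M+yB|_M-I_M)$ to the defining polynomial of $\Gamma$ via the block decomposition and a degree/multiplicity count---fills in a detail the paper omits.
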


\begin{proof}
The implications $(1)\Rightarrow (3)\Rightarrow (2)$
are straightforward, %
and $(2)$ implies $(1)$ 
by Theorem~\ref{T:generic}.
\end{proof}

The case when $H$ is a finite-dimensional Hilbert space is of
classical interest in algebraic geometry. In that setting
every linear operator is bounded and of finite rank, hence
we can take $\mathcal E(H)$ to be the space of all linear
operators on $H$. Now
the statement of our last theorem can be
somewhat simplified and
slightly expanded as follows.

Suppose that $\dim_\C H = N$, and let $H^*$ denote the 
space dual to $H$. Then 
we have the canonical isomorphism 
\[
\wedge^k H^*\longrightarrow\wedge^{N-k}H\otimes\wedge^N H^*, 
\]
which, together with the inner product induced  
isomorphism 
$\wedge^k H \rightarrow \wedge^k H^*$,   
allows us to consider in a natural way  
$\wedge^{N-k}A$ as acting on
the space $\wedge^k H$ for any linear operator  
$A$ on $H$. In particular, we can consider the proper
joint spectrum $\sigma_p(\wedge^{N-k}A, \wedge^k B)$.
When $A$ is self-adjoint,  
a standard exercise in multilinear algebra shows that
the Sylvester expansion formula for the determinant
transforms into the equality 
$(\wedge^{N-k}A)(\wedge^k A)=(\det A)I$. Thus, 
for an invertible self-adjoint $A$ we
have $\wedge^{N-k}A = \det(A) \wedge^k A^{-1}$, 
and therefore  
Theorem~\ref{spectral_curves} implies immediately
the following result.

\begin{theorem}\label{spectral_curves_matrices}
With assumptions and notation as in
Theorem~\ref{spectral_curves}, suppose in addition that
$a=b=0$ and $\dim_\C H = N$.
The following are equivalent:
\begin{itemize}
\item[(1)]
The eigenspace for $A$ corresponding to
$\lambda_1,\dots,\lambda_k$ is invariant for $B$.

\item[(2)]
For some $\rho>0$ the line segments
\[
\begin{aligned}
\{\lambda x + \mu y =1\} & \cap\Delta_{\rho}(1/\lambda, 0)
\text{ and } \\
\{(1/\lambda)x+ \mu y=1\}& \cap\Delta_{\rho}(\lambda, 0)
\end{aligned}
\]
are 
contained in
$
\sigma_p\bigl(
\wedge^k A, \wedge^k B
\bigr)
$
and 
$
\sigma_p\bigl(
\wedge^k A^{-1}, \wedge^k B
\bigr),
$
respectively.

\item[(3)]
For some $\rho>0$ the line segments
\[
\begin{aligned}
\{\lambda x + \mu y =1\} & \cap\Delta_{\rho}(1/\lambda, 0)
\text{ and } \\
\{(\det A/\lambda)x+ \mu y=1\}&
\cap\Delta_{\rho}(\lambda/\det A, 0)
\end{aligned}
\]
are 
contained in
$
\sigma_p\bigl(
\wedge^k A, \wedge^k B
\bigr)
$
and 
$
\sigma_p\bigl(
\wedge^{N-k} A, \wedge^k B
\bigr),
$
respectively.

\item[(4)]
The lines
\begin{equation*}
\{\lambda x + \mu y =1\} 
\quad\text{ and }\quad 
\{(\det A/\lambda)x+ \mu y=1\}
\end{equation*}
are 
contained
in
$
\sigma_p\bigl(
\wedge^k A, \wedge^k B
\bigr)
$
and 
$
\sigma_p\bigl(
\wedge^{N-k} A, \wedge^k B
\bigr),
$
respectively.
\end{itemize}
\end{theorem}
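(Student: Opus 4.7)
The plan is to deduce this theorem almost entirely from Theorem~\ref{spectral_curves} together with the identity $\wedge^{N-k}A=(\det A)\wedge^k A^{-1}$ recorded just before its statement. Since in the finite-dimensional setting every operator is compact, the additional hypothesis $a=b=0$ simply collapses the ``compact plus scalar'' decomposition, so the standing assumptions of Theorem~\ref{spectral_curves} are in force. Consequently $(1)\Leftrightarrow(2)$ is immediate from that theorem, and only $(2)\Leftrightarrow(3)$ and $(3)\Leftrightarrow(4)$ remain.

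The bridge between (2) and (3) is a routine rescaling. For any nonzero scalar $c$ and bounded operators $C,E$ one checks from the definition that
\[
\sigma_p(cC,E)=\{(x,y) : (cx,y)\in\sigma_p(C,E)\},
\]
so a line $\alpha x+\beta y=1$ is contained in $\sigma_p(C,E)$ if and only if the line $(c\alpha)x+\beta y=1$ is contained in $\sigma_p(cC,E)$, and polydisc neighborhoods around the relevant points rescale accordingly. Applying this with $C=\wedge^k A^{-1}$, $E=\wedge^k B$, and $c=\det A$ carries the line $(1/\lambda)x+\mu y=1$ in $\sigma_p(\wedge^k A^{-1},\wedge^k B)$ near $(\lambda,0)$ into the line $(\det A/\lambda)x+\mu y=1$ in $\sigma_p(\wedge^{N-k}A,\wedge^k B)$ near $(\lambda/\det A,0)$, after a harmless shrinking of $\rho$. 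This yields $(2)\Leftrightarrow(3)$.

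The implication $(4)\Rightarrow(3)$ is trivial, so to close the loop I plan to prove $(1)\Rightarrow(4)$ directly. Assume $V$, the eigenspace of $A$ for $\lambda_1,\dots,\lambda_k$, is $B$-invariant. Choose an orthonormal eigenbasis $e_1,\dots,e_k$ of $A|_V$, and set $v=e_1\wedge\dots\wedge e_k$, which spans the one-dimensional subspace $\wedge^kV\subset\wedge^kH$. Then $v$ is an eigenvector of $\wedge^k A$ with eigenvalue $\lambda$, and because $\wedge^k V$ is $\wedge^k B$-invariant of dimension one, $v$ is also an eigenvector of $\wedge^k B$ with eigenvalue $\det(B|_V)$. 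The key bookkeeping step is to identify $\det(B|_V)$ with $\mu$: the determinantal curve of $(A|_V,B|_V)$ is an algebraic curve of degree $k$ contained in $\sigma_p(A,B)$ and passes through every $(1/\lambda_i,0)$; by the hypothesis that each such point lies only on components of $\Gamma$, this curve must coincide with $\Gamma$, forcing the eigenvalues of $B|_V$ to be exactly $\mu_1,\dots,\mu_k$ and hence $\det(B|_V)=\mu$. Thus the entire line $\lambda x+\mu y=1$ lies in $\sigma_p(\wedge^k A,\wedge^k B)$. Using that $v$ is simultaneously an eigenvector of $\wedge^k A^{-1}$ with eigenvalue $1/\lambda$, together with the rescaling identity from the previous paragraph, the entire line $(\det A/\lambda)x+\mu y=1$ lies in $\sigma_p(\wedge^{N-k}A,\wedge^k B)$, establishing (4).

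The main obstacle is the identification $\det(B|_V)=\mu$ in the $(1)\Rightarrow(4)$ step: one has to carefully use the ``each $(1/\lambda_i,0)$ belongs only to components of $\Gamma$'' hypothesis to rule out that the curve cut out by $(A|_V,B|_V)$ could be a \emph{different} union of $k$ components of $\sigma_p(A,B)$ having the same $x$-intercepts as $\Gamma$ but different $y$-intercepts. Once that matching is secured, the rest of the argument is either the elementary rescaling computation above or a direct invocation of Theorem~\ref{spectral_curves}.
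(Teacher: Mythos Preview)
Your proposal is correct and follows essentially the same route as the paper: reduce to Theorem~\ref{spectral_curves} and pass between $(2)$/$(3)$ and between $(3)$/$(4)$ via the identity $\wedge^{N-k}A=(\det A)\,\wedge^k A^{-1}$ together with the elementary rescaling $\sigma_p(cC,E)=\{(x,y):(cx,y)\in\sigma_p(C,E)\}$. The ``main obstacle'' you flag---showing $\det(B|_V)=\mu$ by proving the determinantal curve of $(A|_V,B|_V)$ coincides with $\Gamma$---is exactly the content hidden in the paper's claim that $(1)\Rightarrow(3)$ in Theorem~\ref{spectral_curves} is straightforward, and your resolution goes through because that determinantal curve divides the full determinant (so each of its irreducible factors has multiplicity at most its multiplicity in $\Gamma$) while having the same total degree $k$.
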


Finally, we note that Theorem~C from the Introduction 
is obtained from the result above in the same way that 
Theorem~\ref{line_in_spectrum_tuple} is obtained from 
Theorem~\ref{line_in_spectrum}.

\section{Example: unit circle in the spectrum}
\label{S:example}

Here we present an example of a setting where the existence of
a common reducing subspace is established without exterior products
but rather in a way similar to
Theorem~\ref{line_in_spectrum}.

We start with an elementary result from \cite{CSZ}
about spectra of linear transforms.
Let
\[
{\bf C}=
\left[
\begin{array}{ll}
   c_{11} & c_{12} \\
  c_{21} & c_{22}
\end{array}
\right ]
\]
be a complex-valued matrix. Write
\begin{equation}\label{transform}
B_1= c_{11}A_1+c_{12}A_2, \ B_2=c_{21}A_1+c_{22}A_2.
\end{equation}

\begin{proposition}
$\sigma_p(A_1,A_2)\supset  {\bf C}^T \sigma_p(B_1,B_2)$.
\end{proposition}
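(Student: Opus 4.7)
The plan is to unwind the definitions directly: a point lies in $\sigma_p(B_1, B_2)$ precisely when a certain linear combination $xB_1 + yB_2 - I$ fails to be invertible, and substituting the expressions for $B_1$ and $B_2$ immediately rewrites that operator as a linear combination of $A_1, A_2$ minus the identity, whose coefficients turn out to be the components of ${\bf C}^T(x,y)^T$.

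More precisely, I would proceed as follows. Take an arbitrary point $(x,y)\in \sigma_p(B_1,B_2)$, so that $xB_1 + yB_2 - I$ fails to be invertible on $H$. Substituting \eqref{transform} gives
\[
xB_1+yB_2 = x(c_{11}A_1+c_{12}A_2) + y(c_{21}A_1+c_{22}A_2)
          = (c_{11}x+c_{21}y)A_1 + (c_{12}x+c_{22}y)A_2.
\]
Setting $(x',y') = {\bf C}^T(x,y)^T$, i.e.\ $x' = c_{11}x + c_{21}y$ and $y' = c_{12}x + c_{22}y$, we see that $x'A_1 + y'A_2 - I$ is the same non-invertible operator, so $(x',y')\in \sigma_p(A_1,A_2)$.

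The only thing to check carefully is the placement of the transpose: the coefficient of $A_1$ in $xB_1+yB_2$ involves the first column of ${\bf C}$ (namely $c_{11}, c_{21}$), and likewise for $A_2$, which is exactly multiplication of $(x,y)^T$ by ${\bf C}^T$ rather than ${\bf C}$. There is no real obstacle beyond this bookkeeping; the statement is essentially a change-of-variables identity for pencils, and no spectral machinery from the earlier sections is needed.
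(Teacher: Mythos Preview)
Your proof is correct and is exactly the straightforward computation the paper has in mind; the paper itself omits the argument entirely, simply stating that the proof is straightforward.
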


The proof is straightforward.

\begin{corollary}\label{spectral_transform}
If {\bf C} is invertible, then
\begin{equation}\label{spectransform}
\sigma_p(A_1,A_2)={\bf C}^T \sigma_p(B_1,B_2).
\end{equation}
\end{corollary}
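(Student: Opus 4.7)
The plan is to deduce the corollary from the proposition just stated by exploiting the invertibility of $\mathbf{C}$ to run the same inclusion argument in reverse.

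First I observe that the proposition itself already gives one inclusion, namely $\mathbf{C}^T\sigma_p(B_1,B_2)\subset \sigma_p(A_1,A_2)$. For the opposite inclusion, I would note that when $\mathbf{C}$ is invertible the linear system \eqref{transform} can be solved for $A_1$ and $A_2$ in terms of $B_1$ and $B_2$, so that
\[
\begin{pmatrix} A_1 \\ A_2 \end{pmatrix} = \mathbf{C}^{-1}\begin{pmatrix} B_1 \\ B_2 \end{pmatrix}.
\]
Thus the roles of the two pairs are symmetric, and the proposition applied with $(B_1,B_2)$ in place of $(A_1,A_2)$ and with matrix $\mathbf{C}^{-1}$ in place of $\mathbf{C}$ yields
\[
\sigma_p(B_1,B_2)\supset (\mathbf{C}^{-1})^T\sigma_p(A_1,A_2).
\]
Multiplying both sides by $\mathbf{C}^T$ (which, being invertible, acts bijectively on $\mathbb C^2$) produces the desired inclusion $\mathbf{C}^T\sigma_p(B_1,B_2)\supset \sigma_p(A_1,A_2)$.

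Combining the two inclusions gives the equality \eqref{spectransform}. The argument is essentially bookkeeping: the only thing to verify is the correct placement of the transpose, which is forced by the change-of-variable identity
\[
x_1B_1+x_2B_2=(c_{11}x_1+c_{21}x_2)A_1+(c_{12}x_1+c_{22}x_2)A_2,
\]
in other words by $(y_1,y_2)^T=\mathbf{C}^T(x_1,x_2)^T$, which is exactly the content underlying the proposition. There is no real obstacle here; the main point is simply to recognize that invertibility of $\mathbf{C}$ converts the one-sided statement of the proposition into a two-sided one by symmetry of the roles of $(A_1,A_2)$ and $(B_1,B_2)$.
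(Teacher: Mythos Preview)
Your argument is correct and is exactly the natural derivation of the corollary from the proposition: one inclusion is the proposition itself, and invertibility of $\mathbf{C}$ lets you swap the roles of $(A_1,A_2)$ and $(B_1,B_2)$ to obtain the reverse inclusion. The paper does not spell out a proof of the corollary at all (it is simply recorded as an immediate consequence of the proposition, whose proof is declared ``straightforward''), so your write-up is, if anything, more detailed than what appears in the paper.
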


The following result shows that under some
conditions the presence of
a circle in the joint spectrum implies the
existence of a common reducing subspace.

\begin{theorem}\label{circle}
Let $A_1,A_2$ be self-adjoint and suppose that $A_1$ is
invertible, $\left\Vert A_2\right\Vert =1$, and both
$\pm 1$ be isolated spectral points for $A_1$ of finte
multiplicites $k_1$ and $l_1$ and be eigenvalues of
$A_2$ of finite multiplicities $k_2$ and $l_2$ respectively
with $k_2+l_2\leq k_1+l_1$. If there is $\rho >0$ such that
\begin{equation}\label{isolated_circle}
\begin{split}
\{x^2+y^2=1\}\cap \Delta_\rho(z,0) &=
\sigma_p(A_1,A_2)\cap \Delta_\rho(z,0) 
\\
&=
\sigma_p(A_1^{-1},A_2)\cap \Delta_\rho(z,0); \qquad
z=\pm 1.
\end{split}
\end{equation}
Then
\begin{itemize}
\item[(1)]
$k_1=l_1=k_2=l_2$;

\item[(2)]
$A_1$ and $A_2$ have a common $2n$-dimensional reducing
subspace $L$, where $n=k_1=l_1=k_2=l_2$;

\item[(3)]
the pair of restrictions of $A_1|_L$ and $A_2|_L$ is
unitary equivalent to a pair of $2n\times 2n$ involutions
$C_1$ and $C_2$, where
$$
C_1=
\left[
\begin{array}{cc}
                  I_n & 0_n \\
                  0_n & -I_n
\end{array}
\right], \
C_2=
\left[
\begin{array}{cc}
                            0_n & D_n \\
                        D_n^\ast & 0_n
\end{array}
\right],
$$
$0_n$ and $I_n$ are zero and identity $n\times n$ matrices
and $D$ is a unitary $n\times n$ matrix;

\item[(4)]
The group generated by $C_1$ and $C_2$ represents
Coxeter's group $BC_2$.
\end{itemize}
\end{theorem}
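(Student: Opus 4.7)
The plan is to run in parallel the strategy of Lemma~\ref{invertible1} for the pair $(A_1, A_2)$ and its inverse counterpart $(A_1^{-1}, A_2)$, now with the unit circle in place of a line. The polynomial $\mathcal{R}(x,y) = x^2 + y^2 - 1$ has degree $k = 2$, $R_1 = 0$, $R_2 = x^2 + y^2$, so Theorem~\ref{Psi_1} applied near $(1, 0)$ produces operator-valued functions
\[
\Psi_1(w) = (w^2 - 1)(wI - A_1)^{-1} A_2 (wI - A_1)^{-1},
\]
\[
\Psi_2(w) = (wI - A_1)^{-1}\bigl[(w^2 - 1)(A_2(wI - A_1)^{-1})^2 - I\bigr]
\]
with vanishing residues at $w = 1$. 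Substituting the spectral decomposition
\[
(wI - A_1)^{-1} = \frac{P_1}{w-1} + \frac{P_{-1}}{w+1} + \int_{\sigma(A_1)\setminus\{\pm 1\}}\frac{dE(z)}{w-z},
\]
one obtains $\mathrm{Res}_{w=1}\Psi_1 = 2 P_1 A_2 P_1$, whence $P_1 A_2 P_1 = 0$. Feeding this back into $\Psi_2$ cancels its $(w-1)^{-3}$ term, and a careful computation of the remaining pole yields
\[
P_1 A_2 P_{-1} A_2 P_1 + 2 P_1 A_2 \Bigl(\int_{\sigma(A_1)\setminus\{\pm 1\}}\frac{dE(z)}{1-z}\Bigr) A_2 P_1 = P_1.
\]

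Repeating the same computation for $(A_1^{-1}, A_2)$ produces the analogous identity with $dE(z)/(1-z)$ replaced by $z\,dE(z)/(z-1)$, since the $\pm 1$-eigenspaces of $A_1$ and $A_1^{-1}$ coincide. Adding the two identities and invoking the pointwise collapse $1/(1-z) + z/(z-1) = 1$ together with $\int dE = I - P_1 - P_{-1}$ and $P_1 A_2 P_1 = 0$ reduces the result to the clean identity $P_1 A_2^2 P_1 = P_1$. Since $\|A_2\| = 1$, the operator $I - A_2^2$ is a positive contraction, so $\langle (I - A_2^2) P_1 y, P_1 y\rangle = 0$ for all $y$ forces $A_2^2 P_1 = P_1$; equivalently, $\mathrm{ran}(P_1)$ lies in the sum $V_1 \oplus V_{-1}$ of the $\pm 1$-eigenspaces of $A_2$. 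Repeating the entire analysis near $(-1, 0)$ (where the roles of $P_1$ and $P_{-1}$ swap under the rescaling $A_1 \mapsto -A_1$) gives $P_{-1} A_2 P_{-1} = 0$, $A_2^2 P_{-1} = P_{-1}$, and $\mathrm{ran}(P_{-1}) \subseteq V_1 \oplus V_{-1}$.

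From here the conclusion is structural. Since $\mathrm{ran}(P_1) \perp \mathrm{ran}(P_{-1})$, the inclusion $\mathrm{ran}(P_1) \oplus \mathrm{ran}(P_{-1}) \subseteq V_1 \oplus V_{-1}$ combined with the hypothesis $k_2 + l_2 \leq k_1 + l_1$ forces equality of dimensions and of the subspaces; the common subspace $L$ is then a sum of eigenspaces for each operator, hence reduces both. In the orthonormal basis of $L$ corresponding to the splitting $\mathrm{ran}(P_1) \oplus \mathrm{ran}(P_{-1})$, the matrix of $A_1|_L$ is $\mathrm{diag}(I_{k_1}, -I_{l_1})$, while self-adjointness and the conditions $P_{\pm 1} A_2 P_{\pm 1} = 0$ force $A_2|_L$ into the off-diagonal block form $\begin{pmatrix} 0 & D \\ D^* & 0\end{pmatrix}$. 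The identity $P_1 A_2^2 P_1 = P_1$ then gives $DD^* = I$, and symmetrically $D^* D = I$, so $D$ is unitary, forcing $k_1 = l_1 =: n$. A direct eigenvector computation for the off-diagonal block yields $\dim V_{\pm 1} = n$, giving $k_2 = l_2 = n$ and establishing (1)--(3).

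For (4), $C_1^2 = C_2^2 = I$ is immediate from the block forms and unitarity of $D_n$, and a direct $2\times 2$ block computation gives $(C_1 C_2)^2 = -I_{2n}$, hence $(C_1 C_2)^4 = I$ with $C_1 C_2$ of order exactly four. The group $\langle C_1, C_2\rangle$ therefore satisfies the defining relations $c_1^2 = c_2^2 = (c_1 c_2)^4 = 1$ of the Coxeter group $BC_2$ and has order $8$, so it is realized as the dihedral group of order $8$, which is $BC_2$ itself. The main obstacle in executing this plan is the bookkeeping for the residue of $\Psi_2$, which has a priori a triple pole at $w=1$: one must carefully identify which Laurent coefficients are killed by $P_1 A_2 P_1 = 0$ and which survive, so that the single remaining term pairs cleanly with its $A_1^{-1}$-counterpart to produce the decisive identity $P_1 A_2^2 P_1 = P_1$.
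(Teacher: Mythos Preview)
Your proposal is correct and follows essentially the same route as the paper: compute the residues of $\Psi_1$ and $\Psi_2$ for both $(A_1,A_2)$ and $(A_1^{-1},A_2)$, combine them to obtain $P_1A_2^2P_1=P_1$ (the paper phrases this scalarwise as $\|A_2e_j\|=1$), invoke $\|A_2\|=1$ to force $\mathrm{ran}(P_{\pm 1})\subseteq V_1\oplus V_{-1}$, and finish with a dimension count.

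The one noteworthy difference is in the endgame for the block structure and the equality $k_1=l_1$. The paper argues that for small real $\theta$ the operator $A_\theta=(\cos\theta)A_1+(\sin\theta)A_2$ restricts to an involution on $L$ (by spectral continuity and the circle hypothesis), which forces the anticommutation $A_1A_2+A_2A_1=0$ on $L$ and hence the off-diagonal block form; then nonvanishing of the determinant gives $k_1=l_1$. You instead obtain the off-diagonal form directly from $P_{\pm 1}A_2P_{\pm 1}=0$ (residue conditions at both $(\pm 1,0)$), and get $k_1=l_1$ from the unitarity of $D$ via $DD^*=I_{k_1}$, $D^*D=I_{l_1}$ (both consequences of $A_2^2|_L=I$). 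Your argument is a bit more self-contained since it avoids appealing to spectral continuity; the paper's rotation argument, on the other hand, yields the anticommutation relation as an explicit structural byproduct.
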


\begin{proof}
Let  $e_1,...e_{k_1}$ and $e_{k_1+1},...e_{k_1+l_1}$ be a pair
of  orthonormal  eigenbases for the eigenspaces of $A_1$
with eigenvalues $1$ and $-1$ respectively and
$\xi_1,...,\xi_{k_2}$ and $\xi_{k_2+1},...,\xi_{k_2+l_2}$ be a
similar pair of orthonormal eigenbases for $A_2$.
Write down the spectral decomposition of $A_1$:
$$
A_1=P_1-P_2+\int_{\sigma(A_1)\setminus \{ \pm 1\} } zdE(z),
$$
where $P_1$ and $P_2$ are the original projections on
the spaces $span\{ e_1,...,e_{k_1}\}$ and
$span \{ e_{k_1+1},...,e_{k_1+l_1}\}$ respectively. Applied
to this particular situation equation
\eqref{vanishing_residue} for $m=1$ and $m=2$ gives
\begin{eqnarray}
P_1A_2P_1=0 \label{circle_order_one} \\
P_1A_2TA_2P_1= - \frac{1}{2}P_1 \label{circle_order_2}
\end{eqnarray}

Equation \eqref{circle_order_2} implies that for
every $j=1,...,k_1$ we have
\begin{equation}\label{squares_for A_1}
\sum_{m=k_1+1}^{k_1+l_1}
\frac{|\langle A_2e_j,e_m\rangle |^2}{2} +
\int_{\sigma(A_1)\setminus \{\pm 1\} }
\frac{|\langle A_2e_j, dE(z)\rangle|^2}{1-z}=\frac{1}{2}.
\end{equation}
The same computation applied to the unit circle in
$\sigma_p(A_1^{-1},A_2)$ yields
$$
P_1A_2T(A_1^{-1})A_2P_1=-\frac{1}{2}P_1,
$$
and equation \eqref{T_of_inverse} now gives
\begin{equation}\label{squares_for_inverse}
-\sum_{m=k_1+1}^{k_1+l_1}
\frac{|\langle A_2e_j,e_m\rangle |^2}{2} +
\int_{\sigma(A_1)\setminus \{\pm 1\} }
\frac{z|\langle A_2e_j, dE(z)\rangle|^2}{1-z}=-\frac{1}{2}.
\end{equation}
Subtracting \eqref{squares_for_inverse} from
\eqref{squares_for A_1} we obtain
\begin{equation}\label{sum_of_squares}
\sum_{m=k_1+1}^{k_1+l_1} |\langle A_2e_j,e_m\rangle|^2 +
\int_{\sigma(A_1)\setminus \{\pm 1\} }
|\langle A_2e_j, dE(z)\rangle|^2=1.
\end{equation}
Since \eqref{circle_order_one} means $P_1(A_2e_j)=0$,
the relation \eqref{sum_of_squares} implies
\begin{equation}\label{norm_one}
\left\Vert A_2e_j\right\Vert =1.
\end{equation}
Since $\Vert A_2\Vert = 1$, it follows from
\eqref{norm_one} that for every $1\leq j\leq k_1$, \
$e_j\in span \{ \xi_1,... \xi_{k_2+l_2}\}$. A similar
analysis applied to the decomposition near the spectral
point $(-1,0)$   shows that for each
$k_1 + 1\leq j\leq k_1+l_1$, \
$e_j\in span \{ \xi_1,...\xi_{k_2+l_2}\}$. Thus,
$
span\{ e_1,...,e_{k_1+l_1} \} \subset
span\{ \xi_1,...\xi_{k_2+l_2}\}
$
and, since $k_2+l_2\leq k_1+l_1$, we have $k_1+l_1=k_2+l_2$
and
$
L =
span\{ \xi_1, ...\xi_{k_2+l_2}\} =
span \{ e_1,...e_{k_1+l_1}\}
$
is a common reducing subspace of $A_1$ and $A_2$. We also
remark that the restrictions of $A_1$ and $A_2$ to $L$ are
involutions.

To show that $k_1=l_1=k_2=l_2$ observe that condition
\eqref{isolated_circle}, spectral continuity and
Corollary~\ref{spectral_transform} imply that for every
real $\theta$ close to zero the operator
$A_\theta = (\cos\theta) A_1 + (\sin\theta) A_2$ has $L$ as
an invariant subspace, is self-adjoint and the spectrum
of the restriction of $A_\theta$ to
$L$ consists of $\pm 1$. Hence, $A_\theta|_L$ is an
involution, and we have
\begin{multline*}
I = A_\theta^2=
(\cos^2\theta) A_1^2 + (\sin^2\theta) A_2^2 +
(\cos\theta) (\sin\theta) (A_1A_2+A_2A_1) 
\\
= I + (\cos\theta)(\sin\theta) (A_1A_2+A_2A_1). 
\end{multline*}
If $\sin 2\theta \neq 0$, the last relation implies
\begin{equation}\label{involution}
A_1A_2=-A_2A_1.
\end{equation}
In the basis $e_1,...,e_{k_1+l_1}$ the restriction of
$A_1$ to $L$ has the form
$$
\left[
\begin{array}{cc}
          I_{k_1} & 0_{k_1\times l_1} \\
           0_{l_1\times k_1} & I_{l_1}
\end{array}
\right].
$$
Since  $A_2$ is self-adjoint, it follows from
\eqref{involution} that the restriction of $A_2$
to $L$ in the same basis has the form
\begin{equation}\label{involution1}
\left[
\begin{array}{cc}
   0_{k_1} & D_{k_1 \times l_1} \\
  D_{k_1\times l_1}^\ast & 0_{l_1}
\end{array}
\right].
\end{equation}
If $k_1\neq l_1$, the determinant of the matrix
\eqref{involution1} is equal to zero contradicting the
fact that it is an involution. Thus, $k_1=l_1= 1/2(k_1+l_1)$.
Also, the trace of matrix \eqref{involution1} is equal to
zero, which implies that $k_2=l_2=k_1=l_1$, and both (2)
and (3) are proved

Since the restrictions of $A_1$ and $A_2$ to $L$ are
involutions, to prove (4) it suffices to verify that
$$
(A_1|_L A_2|_L)^4=I.
$$
This can be done by a direct computation. 
\end{proof}


\begin{remark}
As we saw above, the joint spectrum of a pair of compact
operators is an analytic set, and, of course, away from the 
accumulation line, the same is
true for operators in  ${\mathcal E}(H)$. 
In this setting it is easy
to show using spectral continuity and the fact that the
set of singularities of an analytic set has higher
codimension, that the spectrum is a divisor, that is the
multiplicity is the same at every pair of points in the
joint spectrum belonging to the same irreducible component
of the spectrum where this component is smooth. Thus, in
such setting the statement (1) of Theorem~\ref{circle} does
not require any special proof.
\end{remark}


\section{Spectral continuity and
common ``almost eigenvectors''}
\label{S:spectral-continuity}

Spectral continuity is  well-known in the classical
spectral theory, see \cite{CM}. In our case it implies
that if $A_{1,n}\to A_1$ and $A_{2,n}\to A_2$ in operator
norm topology as $n\to \infty$, \ then
$\sigma_p(A_{1,n},A_{2,n})$
converges to $\sigma_p(A_1,A_2)$ in Hausdorff topology
uniformly on compact subsets of $\C^2$.
In particular, this implies that if $A_1$ and $A_2$ have
a common eigenvector, $A_{1,n}$ and $A_{2,n}$ have a
common ``almost eigenvector'' (we define it below) and
$\sigma_p(A_{1,n},A_{2,n})$ contains an
irreducible component that converges to a line in
Hausdorff topology uniformly on compacts as $n\to \infty$.
In this section we prove  results that establish the
converse: under certain natural assumptions local
closeness of $\sigma_p(A_1,A_2)$ to a line  implies
existence of a common almost eigenvector.


\begin{definition}
We say that a non-zero vector $\xi$ is an
$\epsilon$-eigenvector (almost eigenvector)
of an operator $A$  if there exists
$\lambda \in \C$ such that
$
\Vert A\xi -\lambda \xi \Vert \leq \epsilon
\Vert \xi \Vert.
$
\end{definition}


Since the distance from $A\xi$ to the line
$\{ \lambda \xi : \ \lambda \in \C \}$ is equal to
$
\Vert
A\xi - \frac{\langle A\xi,\xi \rangle}{\Vert \xi\Vert^2}\xi
\Vert,
$
we come to an equivalent definition of an
$\epsilon$-eigenvector:
\emph{$\xi$ is an $\epsilon$-eigenvector of $A$} if
\begin{equation}\label{epsilon-eigenvector}
\left\Vert
A\xi - \frac{\langle A\xi,\xi\rangle}{\Vert \xi\Vert^2}\xi
\right\Vert
\leq \epsilon \Vert \xi  \Vert.
\end{equation}
It immediately follows from \eqref{epsilon-eigenvector}
that $\xi$ is an eigenvector of $A$ in the traditional
sense, if and only if it is an $\epsilon$-eigenvector
for all $\epsilon >0$. More generally, if
$\lambda \in \sigma(A)$, then for every $\epsilon >0$
there exists an $\epsilon$-eigenvector $\xi$ such that
$\Vert A\xi-\lambda \xi \Vert \leq\epsilon\Vert \xi \Vert$.

Of course, every vector $\xi$ is an $\epsilon$-eigenvector
with the appropriate choice of $\epsilon$ to be  equal to
the lefthand side of \eqref{epsilon-eigenvector}, but
this is quite meaningless. The notion of an
$\epsilon$-eigenvector is meaningful when $\epsilon$
is small. In this case being an $\epsilon$-eigenvector
means that $A\xi$ lies in a small aperture cone that has
the line $\{ \lambda \xi : \ \lambda \in \C \}$ as the
symmetry axis.

Our next result is a generalization of
Theorem~\ref{line_in_spectrum} to the case of common almost
eigenvectors for compact operators. Let $\Gamma$ be an
analytic curve that passes through $(x,y)\in \C^2$ and let
$\rho >0$. We will use the following notation:
\[
\Gamma_\rho(x,y)=
\Gamma \cap
\Delta_\rho(x,y).
\]
If  $\epsilon$ is close to zero and $A_1(\epsilon)$ and
$A_2(\epsilon)$ are close to $A_1$ and $A_2$ respectively,
then  spectral continuity implies that locally
$\sigma_p(A_1,A_2)$ is close to
$\sigma_p(A_1(\epsilon), A_2(\epsilon))$. For this reason
in the next theorem without loss of generality we may
assume that $A_1$ is invertible.  To simplify the notation
we will also use rescaling, if necessary, so that the
point $(1,0)$ is in the joint spectrum, of $A_1$ and $A_2$
and $A_1$ is invertible. Finally, 
recall that the
operator $A$ belongs to the class ${\mathcal E}(H)$,
if it is represented as $A=K+\alpha I$ where $K$ is
compact and $\alpha \in \R$.

\begin{theorem}\label{close to line}
Let $A_1, A_2\in {\mathcal E}(H)$  
such  that $1\in \sigma(A_1)$, 
and, therefore, the point $(1,0)$ belongs to
$\sigma_p(A_1,A_2)$  and to $\sigma_p(A_1^{-1}, A_2)$.
Suppose that $1$ is not an accumulation point of
$\sigma(A_1)$ and $(1,0)$ is not
a singular point of either $\sigma_p(A_1,A_2)$, or
$\sigma_p(A_1^{-1},A_2)$. Let  $\sigma_p(A_1,A_2)$  and
$\sigma_p(A_1^{-1},A_2)$ near $(1,0 )$ be zeros of analytic
functions $f_1(x,y)$ and $f_2(x,y)$ respectively.
If there exist
$0<\rho <1$ and  $0<\epsilon \ll \rho$ such that
\begin{itemize}
\item[(1)]
$
d=1-(1-\rho)\Vert A_1\Vert -
\epsilon\sqrt{2}\Vert A_2\Vert > 0;
$

\item[(2)]
the Hausdorff distances from $\sigma_p(A_1,A_2)_\rho (1,0)$
and $\sigma_p(A_1,A_2)_\rho(1,0)$ to the line
$\{x+\beta y=1\}$ are less than $\epsilon $
($\beta$ is a real number); 

\item [(3)]
$
\frac{\partial f_j}{\partial x} +
\beta\frac{\partial f_j}{\partial y}\neq 0
$
in $\Delta_\rho(1,0)$, \ $j=1,2$;
\end{itemize}
then $A_1$ and $A_2$ have a common $\delta$-eigenvector,
where $\delta=D\sqrt{\epsilon}$, and $D$ is a constant
independent of $\beta$.
\end{theorem}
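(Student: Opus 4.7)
The plan is to adapt the proof of Lemma~\ref{invertible1} to an approximate setting by making the residue computations of Section~\ref{S:conditions-algebraic-curve} quantitative.  Since $A_1\in\mathcal E(H)$ and $1\in\sigma(A_1)$ is isolated and not an accumulation point, the spectral projection $P_1$ onto $L_1=\ker(A_1-I)$ has finite rank $n$.  Condition~(1) guarantees that the contour $\gamma=\{w:|w-1|=d\}$ lies in the resolvent set of $xA_1+yA_2$ for every $(x,y)\in\Delta_\rho(1,0)$, so the cluster projection $P(x,y)=\tfrac{1}{2\pi i}\oint_\gamma (wI-(xA_1+yA_2))^{-1}dw$ is analytic on $\Delta_\rho(1,0)$, has constant range dimension $n$, and reduces to $P_1$ at $(1,0)$.

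The heart of the argument is to produce quantitative versions of the residue identities \eqref{residue1} and \eqref{residue3}.  By condition~(2), the zero set of $f_1$ is within Hausdorff distance $\epsilon$ of the line $\{x+\beta y=1\}$ on $\Delta_\rho(1,0)$; combined with the lower bound on $\partial_xf_1+\beta\,\partial_yf_1$ furnished by condition~(3), the implicit function theorem realizes that zero set as a graph $x+\beta y-1=\phi_1(y)$ with $|\phi_1(y)|\le C\epsilon$ on $\Delta_\rho(1,0)$ (and analogously for the spectrum of $(A_1^{-1},A_2)$ with a function $\phi_2$).  Carrying out the derivation of \eqref{condition1}--\eqref{condition2} with the true defining function of the spectrum in place of the polynomial $R=x+\beta y-1$, and using Cauchy estimates on $\phi_1,\phi_2$, I would obtain the perturbed identities
\begin{equation*}
\|P_1A_2P_1-\beta P_1\|\le C_1\epsilon,\qquad
\|P_1A_2\,T\,A_2P_1\|\le C_2\epsilon,\qquad
\|P_1A_2\,\widetilde T\,A_2P_1\|\le C_3\epsilon,
\end{equation*}
where $T=T(A_1)$ and $\widetilde T=T(A_1^{-1})$ are as in \eqref{operatorT}, \eqref{T_of_inverse}, and the constants depend on $\|A_1\|$, $\|A_2\|$, $\rho$, $d$, and the lower bound from~(3), but not on $\beta$.

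The spectral-calculus identity $T+\widetilde T=-(I-P_1)$, which follows from $\tfrac{1}{z-1}+\tfrac{z}{1-z}=-1$ on $\sigma(A_1)\setminus\{1\}$, then gives $\|P_1A_2(I-P_1)A_2P_1\|\le(C_2+C_3)\epsilon$.  For any unit vector $e\in L_1$ this yields
\[
\|(I-P_1)A_2e\|^2
=\langle P_1A_2(I-P_1)A_2P_1e,\,e\rangle
\le(C_2+C_3)\epsilon,
\]
and combined with $\|P_1A_2e-\beta e\|\le C_1\epsilon$ we conclude
$\|A_2e-\beta e\|^2\le C_1^2\epsilon^2+(C_2+C_3)\epsilon\le D^2\epsilon$, so $e$ is a common $D\sqrt\epsilon$-eigenvector with $D$ independent of $\beta$.

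The main obstacle is the middle step.  Whereas in Lemma~\ref{invertible1} the polynomial $R=x+\beta y-1$ defined the spectrum exactly and the integrals \eqref{condition1}--\eqref{condition2} vanished identically, here the perturbation $\phi_1$ introduces nonzero corrections.  Bounding these corrections uniformly on $\gamma$ requires both the sup-norm bound $|\phi_1|\le C\epsilon$ (for the order-one residue) and Cauchy estimates $|\phi_1^{(m)}(0)|\le C\epsilon\rho^{-m}$ (for the order-two residue), paired with the uniform resolvent bounds on $\gamma$ provided by condition~(1).  A secondary subtlety, when $n>1$, is that the eigenvalue cluster of $xA_1+yA_2$ near $1$ may split under perturbation, so the analytic scalar proxy for the eigenvalue must be taken as the trace $\tfrac1n\operatorname{tr}((xA_1+yA_2)P(x,y))$ rather than any single eigenvalue.
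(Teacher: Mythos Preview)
Your overall strategy matches the paper's: derive the residue identities of Section~\ref{S:conditions-algebraic-curve} in this setting (they hold exactly, not approximately --- the paper records them as Corollary~\ref{first_moments}: $P_1A_2P_1=-x'(0)P_1$ and $P_1A_2TA_2P_1=-\tfrac12 x''(0)P_1$, where $x(y)$ is the implicit function for the spectrum at $(1,0)$), bound $x''(0)$ by $C\epsilon$ via Cauchy estimates, do the same for $(A_1^{-1},A_2)$, and combine using $T+\widetilde T=-(I-P_1)$ exactly as you do.

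The one real discrepancy is the role of condition~(1). You read it as a resolvent condition guaranteeing that $\gamma=\{|w-1|=d\}$ separates the spectral cluster; it is not used that way, and the claim as you state it does not follow from the inequality $d>0$. In the paper, conditions~(1) and~(2) together yield the a~priori bound $|\beta|\le\rho\|A_2\|/d$ (equation~\eqref{beta_estimate}), and \emph{this} is what makes $D$ independent of~$\beta$. To see why it matters: with your parametrization $x+\beta y-1=\phi_1(y)$, the Hausdorff bound gives only $|\phi_1(y)|\le\epsilon\sqrt{1+\beta^2}$, and the disk in $y$ on which $\phi_1$ is analytic and so bounded has radius of order $\rho/\sqrt{1+\beta^2}$; both factors feed into the Cauchy estimate for $\phi_1''(0)=x''(0)$, yielding $|x''(0)|\lesssim \epsilon(1+\beta^2)^{3/2}/\rho^2$, which is not uniform in~$\beta$ without \eqref{beta_estimate}. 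The paper makes this transparent by rotating to coordinates $(u,v)$ aligned with the line, obtaining $|u(v)|\le 2\epsilon$ on a disk of radius $\rho/\sqrt 2$ independent of~$\beta$, and then converting $u''(0)$ to $x''(0)$ via~\eqref{small7}.

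A second, smaller difference: the paper first replaces $A_1,A_2$ by finite-rank perturbations with simple spectra, so that $P_1$ has rank one and the cluster-splitting issue you flag never arises; the residue formulas for the original operators are recovered in the limit (this is precisely Corollary~\ref{first_moments}). Your direct route with a rank-$n$ projection $P_1$ is viable, but the paper's detour avoids the trace-proxy bookkeeping entirely.
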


\begin{proof}
First we observe that conditions (1) and (2) imply that
$|\beta|$ has an upper bound expressed in terms of
$\rho, \epsilon$ and the norms of $A_1$ and $A_2$. Indeed,
suppose that $|\beta|>1$. Without loss of generality we
may assume that $\beta >0$. It is shown below that there
is a point $(1-\rho, \tau)\in \sigma_p(A_1,A_2)$.
The distance from this point to $\{x+\beta y=1\}$ is equal
to $\frac{|\beta \tau -\rho|}{\sqrt{1+\beta^2}}$.
Condition (2) implies
$$
\frac{\rho}{\beta}-
\epsilon\sqrt{1+\frac{1}{\beta^2}}\leq \tau \leq
\frac{\rho}{\beta}+\epsilon \sqrt{1+\frac{1}{\beta^2}},
$$
so that
$$
|\tau|\leq \frac{\rho}{\beta}+\epsilon \sqrt{2}.
$$
Since the operator $(1-\rho)A_1+\tau A_2-I$ is not
invertible, we have
\begin{eqnarray}
1\leq \Vert (1-\rho)A_1+\tau A_2\Vert
 \leq (1-\rho)\Vert A_1\Vert +
|\tau| \Vert A_2\Vert  \nonumber \\
\leq (1-\rho)\Vert A_1\Vert +
(\frac{\rho}{\beta} +\epsilon \sqrt{2})\Vert A_2\Vert.
\nonumber
\end{eqnarray}
This implies
\begin{equation}\label{beta_estimate}
\beta \leq
\frac{\rho \Vert A_2\Vert}
     {1-(1-\rho)\Vert A_1\Vert -
      \epsilon\sqrt{2}\Vert A_2\Vert } =
\frac{\rho \Vert A_2\Vert}{d}.
\end{equation}
Now we approximate the compact parts of $A_1$ and $A_2$ by
finite rank operators with simple spectra (that is every
non-zero eigenvector has multiplicity one) resulting in
operators $\tilde{A}_1$ and $\tilde{A}_2$. We can find
$\tau \in \R$ as close to zero as we want, such that
$\hat{A}_1=\tilde{A}_1+\tau I$ is invertible. Since
$\hat{A}_1$ and $\tilde{A}_2$ are close to $A_1$ and $A_2$
respectively, the spectral continuity
implies that $\sigma_p(\hat{A}_1,\tilde{A}_2)$ is close to
$\sigma_p(A_1,A_2)$ in the bidisk
$\{ |x-1|\leq \rho, \ |y|\leq \rho \}$. Also if $\hat{A}_1$
and $\tilde{A}_2 $ are close enough to $A_1$ and $A_2$
respectively, then $\delta$-eigenvectors for $\hat{A}_1$
and $\tilde{A}_2$ are $2\delta$-eigenvectors for $A_1$ and
$A_2$. It is clear that $\sigma_p(\hat{A}_1,\tilde{A}_2)$
and $\sigma_p(\hat{A}_1^{-1},\tilde{A}_2)$ are algebraic
sets. Finally, the points of intersection of
$\sigma_p(\hat{A}_1, \tilde{A}_2)$ and
$\sigma_p(\hat{A}_1^{-1},\tilde{A}_2)$ with the $x$-axis
that are close to one, are regular points, and the
distances between the line $\{ x+\beta y=1\}$ and
$\sigma_p(\hat{A}_1,\tilde{A}_2)$ and
$\sigma_p(\hat{A}_1^{-1},\tilde{A}_2)$ is less than
$2\epsilon$. We denote the polynomials that determine
$\sigma_p(\hat{A}_1,\tilde{A}_2)$ and
$\sigma_p(\hat{A}_1^{-1},\tilde{A}_2)$ by ${\mathcal R}(x,y)$
and ${\mathcal S}(x,y)$. As the rank of the approximating
operators increases and $\tau$ approaches zero, the
polynomials ${\mathcal R}$ and ${\mathcal S}$ approach
$f_1$ and $f_2$ respectively. It follows from the direct
expression  of  the defining function given by equation
\eqref{determine}. Thus,  the condition $(2)$ holds for
these polynomials with $\epsilon_1=2\epsilon$. Again
rescaling with a coefficient close to one we may assume
that both $\sigma_p(\hat{A}_1,\tilde{A}_2)$ and
$\sigma_p(\hat{A}_1^{-1},\tilde{A}_2)$ pass through $(1,0)$.

Next we note that since $\epsilon \ll \rho$, the orthogonal
projection in $\C^2$ onto $\{x+\beta y=1\}$ of each  curve
$\sigma_p(\hat{A}_1,\tilde{A}_2)$ and
$\sigma_p(\hat{A}_1^{-1},\tilde{A}_2)$ contains the disk of
radius $\frac{\rho}{\sqrt{2}}$ centered at $(1,0)$.
Indeed, again we may assume $\beta \geq 0$.
Let us change the coordinates to
\begin{equation}\label{u-v_coordinates}
u =
\frac{x-1}{\sqrt{1+\beta^2}} +
\frac{\beta y}{\sqrt{1+\beta^2}}, \
v =
\frac{\beta (x-1)}{\sqrt{1+\beta^2}} -
\frac{y}{\sqrt{1+\beta^2}}.
\end{equation}
It is easily seen that the the bidisk
$\{|x-1|\leq \rho, \ |y|\leq \rho \}$ contains the bidisk
$
\Delta_\rho(\beta) =
\{
|u|\leq \frac{\rho \sqrt{1+\beta^2}}{1+\beta}, \
|v|\leq \frac{\rho \sqrt{1+\beta^2}}{1+\beta}
\}.
$
Since
$\frac{\sqrt{1+\beta^2}}{1+\beta} \geq \frac{1}{\sqrt{2}}$,
the bidisk
$
\Delta_\rho =
\{
|u| \leq \frac{\rho}{\sqrt{2}}, \
|v| \leq \frac{\rho}{\sqrt{2}}
\}
$
is in $\{ |x-1|\leq \rho, \ |y|\leq \rho \}$. In the
$(u,v)$-coordinates $\sigma_p(\hat{A}_1,\tilde{A}_2)$ and
$\sigma_p(\hat{A}_1^{-1},\tilde{A}_2)$ are zeros of the
polynomials
$
\tilde{{\mathcal R}}(u,v)={\mathcal R}
(\frac{u+\beta v}{\sqrt{1+\beta^2}}+1,
 \frac{\beta u - v}{\sqrt{1+\beta^2}})
$
and
$
\tilde{{\mathcal S}}(u,v) = {\mathcal S}
(\frac{u+\beta v}{\sqrt{1+\beta^2}} + 1,
 \frac{\beta u - v}{\sqrt{1+\beta^2}})
$
respectively. Suppose that $(u_0,v_0)\in \Delta_\rho$  and
$\tilde{{\mathcal R}}(u_0,v_0)=0$, that is
$(u_0,v_0)\in \sigma_p(\hat{ A}_1, \tilde{A}_2)$.  The
distance from this point to the line
$\{ x+\beta y= 1\}=\{ u=0\}$ is equal  to $|u_0|$. Hence,
$|u_0|\leq 2\epsilon$. Consider the functions
$\phi_v(u)=\tilde{{\mathcal R}}(u,v)$. Since
$\phi_{v_0}(u_0)=0$, Hurwitz's theorem (see, for example,
\cite[page 231]{G}) implies that if $v_1$ is close to
$v_0$, \ $\phi_{v_1}$ has  zero $u_1$ close to $u_0$.
Since $|u_0|\leq 2\epsilon \ll \frac{\rho}{\sqrt{2}}$,
we conclude that $|u_1|<\frac{\rho}{\sqrt{2}}$, and,
therefore, $(u_1,v_1)\in \Delta_\rho$, and $(0,v_1)$
belongs to the projection of
$\sigma_p(\hat{A}_1,\tilde{A}_2)$ onto $\{ u=0\}$. A
similar argument applied to
$\sigma_p(\hat{A}_1^{-1},\tilde{A}_2)$ finishes the proof
of the claim.

We now return to relations \eqref{recurrence} and
\eqref{psi1} to express residues  of $\Psi_1$ and $\Psi_2$
explicitedly in terms of derivatives of the determining
polynomial. It is easy to check that in our case these
relations for $\sigma_p(\hat{A}_1,\tilde{A}_2)$
yield
\begin{eqnarray}
{\mathcal R}es \ \Psi_1(\lambda)\left|_{\lambda=1}\right.
=
\frac{\partial {\mathcal R}}{\partial x}
\bigg|_{(1,0)}
P_1\tilde{A}_2P_1 -
\frac{\partial{\mathcal R}}{\partial y}
\bigg|_{(1,0)}
P_1 =0  \label{small1} \\
{\mathcal R}es \ \Psi_2(\lambda)
\left|_{\lambda=1}\right.
=
-\frac{\partial{\mathcal R}}{\partial x}
\bigg|_{(1,0)}
P_1\tilde{A}_2T\tilde{A}_2P_1 -
\frac{1}{2}
\left(\frac{\partial^2{\mathcal R}}{\partial x^2}
\left(\frac{\frac{\partial {\mathcal R}}{\partial y}}
           {\frac{\partial {\mathcal R}}{\partial x}}
\right)^2
\right. \label{small2} \\
\left. -2
\frac{\partial ^2{\mathcal R}}{\partial x\partial y}
\frac{\frac{\partial{\mathcal R}}{\partial y}}
     {\frac{\partial{\mathcal R}}{\partial x}}
+
\frac{\partial^2{\mathcal R}}{\partial y^2}
\right)
\Bigg|_{(1,0)}
P_1=0, \nonumber
\end{eqnarray}
where, as in Section~3, $P_1$ is the orthogonal projection
on the eigenspace of $\hat{A}_1$ corresponding to the
eigenvalue one, and $T$ is defined by \eqref{operatorT}.

Equation~\eqref{small1} together with condition (3) of
this theorem imply that
$
\frac{\partial{\mathcal R}}{\partial x}
\left|_{(1,0)}\right. \neq 0,
$
and, hence, this derivative does not vanish in a
neighborhood of $(1,0)$. By the implicit function theorem
the relation ${\mathcal R}(x,y)=0$ determines $x$ as an
analytic function of $y$ in a neighborhood of $(1,0)$.
In terms of this function $x(y)$ equations \eqref{small1}
and \eqref{small2} can be written as
\begin{eqnarray}
 P_1\tilde{A}_2P_1=-x^\prime (0)P_1,  \label{small3} \\
P_1\tilde{A}_2T\tilde{A}_2P_1 =
-\frac{x^{\prime \prime}(0)}{2} P_1. \label{small4}
\end{eqnarray}
As it was done before, we denote by $e_1,e_2,...$ an
orthonormal eigenbasis for $\hat{A}_1$ with
$
\hat{A}_1(e_1)=e_1,
\hat{A}_1(e_j)=\mu_je_j, \ \mu_j\neq 1, \ j=2,...
$
and $P_j $ being the orthogonal projection onto a
subspace spanned by $e_j$. In this basis relation
\eqref{small4} can be written as
\begin{equation}\label{small4_1}
\sum_{j=2}^\infty
\frac{|\langle \tilde{A}_2e_1,e_j\rangle|^2}{\mu_j-1} =
-\frac{x^{\prime \prime }(0)}{2}.
\end{equation}

Our next step is to show that $x^{\prime \prime} (0)$ is small.
Let us once again pass to the coordinates
\eqref{u-v_coordinates}). We have
$
\frac{\partial \tilde{{\mathcal R}}}{\partial u} =
\frac{1}{\sqrt{1+\beta^2}}
\frac{\partial {\mathcal R}}{\partial x} +
\frac{\beta}{\sqrt{1+\beta^2}}
\frac{\partial {\mathcal R}}{\partial y}\neq 0
$
for every
$
(u,v)\in
\{
|u|\leq \frac{\rho}{\sqrt{2}}, \
|v|\leq \frac{\rho}{\sqrt{2}}
\}.
$
Applying the implicit function theorem we see that
equation $\tilde{{\mathcal R}}(u,v)=0$ determines $u$ as
an analytic  function of $v$ in a neighborhood of every
point $v\in \{ |v|\leq \frac{\rho}{\sqrt{2}}\}$. Since
this function is globally continuous in
$\{|v|\leq \frac{\rho}{\sqrt{2}} \}$, by the monodromy
theorem, see \cite[p.161]{G}, $u$ is holomorphic in the
whole disk $\{ |v|\leq \frac{\rho}{\sqrt{2}} \}$. The above
argument showed that\ $|u(v)|\leq 2\epsilon$ for every
$v\in \{|v|\leq \frac{\rho}{\sqrt{2}} \}.$ Now the Cauchy
theorem implies that
\begin{eqnarray}
|u^\prime(0)|\leq \frac{4\epsilon}{\rho} \label{small5}, \\
|u^{\prime \prime}(0)|\leq \frac{4\sqrt{2}\epsilon}{\rho ^2}.
\label{small6}
\end{eqnarray}
A straightforward computation shows that
\begin{equation}\label{small7}
\frac{d^2 x}{dy^2} =
\frac{(1+\beta^2)^{3/2}\frac{d^2 u}{dv^2}}
     {(1-\beta\frac{du}{dv})^3}.
\end{equation}
Equations \eqref{beta_estimate}, \eqref{small5},
\eqref{small6}, and \eqref{small7} yield
\begin{equation}\label{small8}
|x^{\prime \prime}(0)|\leq C\epsilon
\end{equation}
where
\begin{equation}\label{C}
C =
\frac{4\sqrt{2}\left(1+
               \left(\frac{\rho \Vert A_2\Vert}{d}\right)^2
               \right)^{3/2}\rho}
     {\left(\rho -\frac{4\rho \Vert A_2\Vert}{d}\epsilon
      \right)^3}
\end{equation}
is a constant independent of $\beta$. Now equations
\eqref{small4_1} and \eqref{small8} give
\begin{equation}\label{small9}
\left|
\sum_{j=2}^\infty
\frac{|\langle \tilde{A}_2e_1, e_j\rangle|^2}{\mu_j-1}
\right|
\leq \frac{C}{2}\epsilon.
\end{equation}
Applying a  similar argument to the pair
$\hat{A}_1^{-1},\tilde{A}_2$ and using \eqref{T_of_inverse}
we obtain
\begin{equation}\label{small10}
\left|
\sum_{j=2}^\infty
\frac{\mu_j |\langle \tilde{A}_2e_1,e_j\rangle|^2}
     {\mu_j-1}
\right|
\leq \frac{C}{2}\epsilon .
\end{equation}
Equations \eqref{small9} and \eqref{small10} result in
$$
\sum_{j=2}^\infty
|\langle \tilde{A}_2e_1,e_j\rangle |^2
\leq C\epsilon.
$$
Set $\lambda = \langle \tilde{A}_2e_1,e_1\rangle$. The
last relation implies
\begin{equation}\label{small11}
\Vert \tilde{A}_2e_1 -\lambda e_1\Vert \leq
\sqrt{C\epsilon},
\end{equation}
which means that $e_1$ is $\tilde{\delta}$-eigenvector of
$\tilde{A}_2$, where $\tilde{\delta}=\sqrt{C\epsilon}$.
Thus, $e_1$ is a common $\tilde{\delta}$-eigenvector of
$\hat{A}_1$ and $\tilde{A}_2$. Therefore, as it was
mention above, $e_1$ is a $\delta$-eigenvector of $A_1$
and $A_2$ with $\delta =2\tilde{\delta}$. We are done.
\end{proof}


It was mentioned in the proof of Theorem~\ref{close to line}
that the polynomial ${\mathcal R}$ determining the spectrum of
$\hat{A}_1$ and $\tilde{A}_2$ converges uniformly on compacts
in a neighborhood of $(1,0)$ to the function $f$ determining
$\sigma_p(A_1,A_2)$ as the finite rank approximations of
compact parts converge to those of $A_1$ and $A_2$. Thus,
we have the following corollary to the proof of
Theorem~\ref{close to line}.

\begin{corollary}\label{first_moments}
Suppose that $A_1,A_2\in {\mathcal E}(H)$, with 
$(1,0)\in \sigma_p(A_1,A_2)$.
Suppose further that $f(x,y)$ is an analytic function
that determines $\sigma_p(A_1,A_2)$ near $(1,0)$ and
$\frac{\partial f}{\partial x} \left|_{(1,0)}\right. \neq 0$.
If $P_1$ is the orthogonal projection onto
the eigensubspace of $A_1$ corresponding to $\lambda=1$ and
$T$ is given by \eqref{operatorT}, then
\begin{eqnarray}
P_1A_2P_1 = -x^\prime (0)P_1,
\label{first_moment_implicit} \\
P_1A_2TA_2P_1 = -\frac{x^{\prime \prime}(0)}{2} P_1,
\label{second_moment_implicit}
\end{eqnarray}
where $x(y)$ is the implicit function near $y=0$ determined
by the equations $f(x,y)=0, \ x(0)=1$.
\end{corollary}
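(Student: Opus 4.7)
The plan is to derive Corollary~\ref{first_moments} as the limit of the finite-rank identities \eqref{small3} and \eqref{small4} that were already established in the course of proving Theorem~\ref{close to line}. These identities are precisely \eqref{first_moment_implicit} and \eqref{second_moment_implicit} for the finite-rank approximations $\hat A_1, \tilde A_2\in\mathcal E(H)$ with determining polynomial $\mathcal R$; so the task reduces to passing to the limit as $\hat A_1\to A_1$ and $\tilde A_2\to A_2$ in operator norm.

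First I would construct the approximations exactly as in the proof of Theorem~\ref{close to line}: replace the compact parts of $A_1,A_2$ by finite-rank self-adjoint operators, and shift the first by a small real multiple of $I$ so that the resulting $\hat A_1\in\mathcal E(H)$ has $1$ as an isolated eigenvalue. Rescaling slightly we may assume $(1,0)\in\sigma_p(\hat A_1,\tilde A_2)$. Since \eqref{determine} realizes $\mathcal R$ as a finite determinant built from resolvents that depend continuously in operator norm on the coefficient operators, $\mathcal R\to f$ uniformly on a fixed closed polydisk centered at $(1,0)$. Both functions being analytic, Cauchy's formula yields uniform convergence of all partial derivatives; in particular the implicit-function graphs $x_{\mathcal R}(y)$ converge to $x(y)$ with $x_{\mathcal R}'(0)\to x'(0)$ and $x_{\mathcal R}''(0)\to x''(0)$.

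Next I would handle the operator-theoretic side. Isolation of $1$ in $\sigma(A_1)$, which is implicit in the analytic determining function hypothesis, lets us fix a small contour $\gamma$ around $1$ in the resolvent set of $A_1$. For all sufficiently close approximations, $\gamma$ lies in the resolvent set of $\hat A_1$ as well, so the spectral projection
\[
P_1(\hat A_1)=\frac{1}{2\pi i}\int_\gamma (wI-\hat A_1)^{-1}\,dw
\]
converges in norm to $P_1(A_1)$ by norm convergence of the resolvent along $\gamma$, and an analogous contour-integral representation for \eqref{operatorT} gives $T(\hat A_1)\to T(A_1)$ in norm.

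Finally, for each approximation the identities \eqref{small3} and \eqref{small4} assert precisely \eqref{first_moment_implicit} and \eqref{second_moment_implicit} with $(A_1,A_2,x)$ replaced by $(\hat A_1,\tilde A_2,x_{\mathcal R})$; passing to the limit using the convergences just established yields the desired result. The main technical point to watch will be the stability of the rank of $P_1(\hat A_1)$ when $1$ is an eigenvalue of $A_1$ of multiplicity greater than one, since the finite-rank approximation typically splits it into a cluster of nearby simple eigenvalues; one must verify that a suitably chosen contour $\gamma$ captures this entire cluster so that $\mathrm{rank}\,P_1(\hat A_1)=\mathrm{rank}\,P_1(A_1)$ throughout the limiting process, which follows from the upper semicontinuity of the spectrum together with the finite multiplicity of $1$ as a spectral point of $A_1$.
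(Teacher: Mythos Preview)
Your proposal is correct and follows exactly the paper's approach: the paper's entire proof is the single sentence preceding the corollary, which notes that the determining polynomial $\mathcal R$ converges to $f$ uniformly on compacts as the finite-rank approximations converge, so the result follows from \eqref{small3} and \eqref{small4} in the proof of Theorem~\ref{close to line} by passing to the limit. You have simply filled in the convergence details the paper leaves implicit---the Cauchy-integral argument for convergence of derivatives of the determining function and the resolvent-integral arguments for $P_1(\hat A_1)\to P_1$ and $T(\hat A_1)\to T$---and your caution about rank stability in the multiple-eigenvalue case is appropriate.
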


If in Theorem~\ref{close to line} the norm of $A_2$ is equal
to $|\beta|$, then no condition on $\sigma_p(A_1^{-1},A_2)$ is
necessary for the existence of a common almost eigenvector.

\begin{theorem}\label{almost}
Let $A_1,A_2$ be compact and $\Vert A_2\Vert =|\beta|$.
Suppose that $\alpha\neq 0$ and $(1/\alpha,0)$ belongs to
$\sigma_p(A_1,A_2)$. If there exist $\rho>0$ and
$0<\epsilon \ll \rho$ such that
\begin{itemize}
\item[(1)]
the Hausdorff distance from
$\sigma(A_1,A_2)_\rho (1/\alpha,0)$ to the line
$\{ \alpha x+\beta y=1\}$ does not exceed $\epsilon$;

\item[(2)]
$
\alpha \frac{\partial f}{\partial x} +
\beta \frac{\partial f}{\partial y} \neq 0
$
in the bidisk
$\{ |x-1/\alpha|\leq \rho, \ |y|\leq \rho \}$, where $f$
is an analytic function that determines $\sigma_p(A_1,A_2)$,
\end{itemize}
then there is an eigenvector of $A_1$ that is
$
(
\sqrt{\frac{8|\beta|(1+\beta^2)}{\rho-4\beta\epsilon}}
\sqrt{\epsilon}
)
$-eigenvector of $A_2$.
\end{theorem}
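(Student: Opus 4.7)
The plan is to adapt the argument used for Theorem~\ref{close to line}, but to exploit the extra hypothesis $\|A_2\|=|\beta|$ so that no information from $\sigma_p(A_1^{-1},A_2)$ is ever required. First I would rescale to reduce to the case $\alpha=1$, so that $(1,0)\in\sigma_p(A_1,A_2)$ and the line is $\{x+\beta y=1\}$. Then I approximate the compact parts of $A_1$ and $A_2$ by finite rank self-adjoint operators with simple spectra, perturbing slightly to obtain an invertible $\hat A_1$ and an operator $\tilde A_2$ with $\|\tilde A_2\|$ arbitrarily close to $|\beta|$; by spectral continuity the closeness of the joint spectrum to the line is preserved (with $\epsilon$ replaced by $2\epsilon$), and an almost eigenvector found for $(\hat A_1,\tilde A_2)$ yields one for $(A_1,A_2)$ with only a factor of $2$ loss. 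Let $e_1$ be the unit eigenvector of $\hat A_1$ with eigenvalue $1$.

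Next I apply the first-moment identity \eqref{first_moment_implicit} from Corollary~\ref{first_moments}, which gives $\langle \tilde A_2 e_1,e_1\rangle=-x'(0)$, where $x=x(y)$ is the implicit function determining $\sigma_p(\hat A_1,\tilde A_2)$ near $(1,0)$. Passing to the rotated coordinates $(u,v)$ of \eqref{u-v_coordinates} in which the line becomes $\{u=0\}$, the monodromy and Hurwitz argument of Theorem~\ref{close to line} again produces a single-valued holomorphic $u=u(v)$ on $|v|\le\rho/\sqrt{2}$ with $|u(v)|\le 2\epsilon$, hence by the Cauchy estimate $|u'(0)|\le 4\epsilon/\rho$. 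A direct chain-rule computation (already appearing as \eqref{small7} in the earlier proof) gives
\[
x'(0)+\beta \;=\; \frac{(1+\beta^{2})\,u'(0)}{\beta u'(0)-1},
\qquad\text{so}\qquad
|x'(0)+\beta|\;\le\;\frac{4\epsilon(1+\beta^{2})}{\rho-4|\beta|\epsilon}.
\]

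Now comes the decisive step that replaces the second-moment computation carried out in Theorem~\ref{close to line}. Since $\|\tilde A_2\|\le|\beta|$, Parseval in an orthonormal eigenbasis $\{e_j\}$ of $\hat A_1$ gives
\[
\sum_{j\ge 2}|\langle \tilde A_2 e_1,e_j\rangle|^{2}
\;=\;\|\tilde A_2 e_1\|^{2}-|\langle \tilde A_2 e_1,e_1\rangle|^{2}
\;\le\;\beta^{2}-x'(0)^{2}.
\]
For $\epsilon$ sufficiently small the Cauchy bound forces $x'(0)$ to have the sign opposite to $\beta$ and $|x'(0)|<|\beta|$, so $|\,|\beta|-|x'(0)|\,|=|x'(0)+\beta|$. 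Factoring the right-hand side as $(|\beta|-|x'(0)|)(|\beta|+|x'(0)|)$ and inserting the previous bound yields
\[
\beta^{2}-x'(0)^{2}
\;\le\;\frac{8|\beta|(1+\beta^{2})\,\epsilon}{\rho-4|\beta|\epsilon}.
\]
Setting $\lambda=\langle \tilde A_2 e_1,e_1\rangle$, this is exactly the claimed bound
\[
\|\tilde A_2 e_1-\lambda e_1\|
\;\le\;\sqrt{\frac{8|\beta|(1+\beta^{2})}{\rho-4|\beta|\epsilon}}\,\sqrt{\epsilon},
\]
so $e_1$, which is an eigenvector of $\hat A_1$, is the required almost eigenvector of $\tilde A_2$; passing back to $(A_1,A_2)$ completes the proof. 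The main technical obstacle is the sign/monotonicity bookkeeping that allows replacing $\big|\,|\beta|-|x'(0)|\,\big|$ with $|x'(0)+\beta|$; everything else is essentially a one-line-per-moment calculation once the finite-rank reduction is in place.
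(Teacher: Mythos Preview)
Your proposal is correct and follows essentially the same route as the paper: rescale to $\alpha=1$, perturb to simple spectrum, use the first-moment identity $\langle A_2 e_1,e_1\rangle=-x'(0)$ from Corollary~\ref{first_moments}, bound $|x'(0)+\beta|$ via the rotated $(u,v)$ coordinates and the Cauchy estimate, and then exploit $\|A_2\|=|\beta|$ through $\|A_2 e_1\|^2-|\langle A_2 e_1,e_1\rangle|^2\le\beta^2-x'(0)^2$. One small remark: the inequality $|x'(0)|\le|\beta|$ is not a consequence of the Cauchy bound on $u'(0)$ (which only pins $|x'(0)|$ near $|\beta|$) but follows immediately from $|x'(0)|=|\langle\tilde A_2 e_1,e_1\rangle|\le\|\tilde A_2\|\le|\beta|$; this does not affect the argument.
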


\begin{proof}
As we mentioned before, we can replace   $A_1$ with
$A_1/\alpha$, so that $\alpha =1$. Also, as in
Theorem~\ref{close to line} using an arbitrary small
perturbation we may assume that  eigenvalue $\lambda=1$
of $A_1$  has multiplicity one. Condition $(2)$ implies
that in the bidisk $\{|x-1|\leq \rho, \ |y|\leq \rho \}$
the joint spectrum $\sigma_p( A_1,A_2)$ is nonsingular and,
therefore, is a smooth analytic curve $\Gamma$. Using
condition $(1)$ and the argument with passing to the
coordinates \eqref{u-v_coordinates} similar to the one
that was used in the proof of Theorem~\ref{close to line}
and the fact that
$
\frac{dx}{dy}=
\frac{\frac{du}{dv}+\beta}{\beta \frac{du}{dv} -1}
$
we show that
$
|-x^\prime (0) -\beta|\leq
\frac{ 4(1+\beta^2)\epsilon}{\rho-4\beta \epsilon}.
$
Thus, if $e_1$ is a unit eigenvector of $A_1$ with
eigenvalue $\lambda=1$, the relation
\eqref{first_moment_implicit} implies
$$
|\langle A_2e_1,e_1\rangle|=|x^\prime (0)|\geq
|\beta|-\frac{4(1+\beta^2)\epsilon}{\rho - 4\beta\epsilon}.
$$
Hence,
\begin{eqnarray}
\Vert A_2e_1 - \langle A_2e_1, e_1\rangle e_1 \Vert^2 =
\Vert A_2e_1\Vert^2-|\langle A_2e_1,e_1\rangle|^2
\nonumber \\
\leq \beta^2 -
\left(
|\beta|-\frac{4(1+\beta^2)\epsilon}
             {\rho-4\beta\epsilon}
\right)^2
\leq
\frac{8|\beta|(1+\beta^2)\epsilon}{\rho-4\beta\epsilon}.
\label{estimate}
\end{eqnarray}
We are done.
\end{proof}


\begin{remark}
The condition $\Vert A_2\Vert =|\beta|$ can obviously be
replaces with
$\Bigl| \Vert A_2\Vert -|\beta| \Bigr| <\delta$. In this
case there is a common
$
\sqrt{
2\beta \delta + \delta^2 +
\frac{8\beta(1+\beta^2)\epsilon}{\rho-4\beta \epsilon}
}
$
-eigenvector.
\end{remark}

\section{Norm estimates for the
commutant of a pair of matrices}
\label{S:norm-estimates}


Under the assumptions of Theorem~\ref{almost} we will now
define a new operator close to $A_2$ that has a common
eigenvector with $A_1$. Let as above $e_1$ be an eigenvector
of $A_1$ with $\lambda=1$. Write
$$
\hat{A}_2=P_1A_2 P_1+(I-P_1)A_2(I-P_1).
$$
Of course, $e_1$ is a common eigenvector of $A_1$ and
$\hat{A}_2$.

Let $\xi$ be a unit vector orthogonal to $e_1$, that is
$\Vert \xi \Vert = 1, \ (I-P_1)\xi=\xi$. We have
$$
\Vert A_2\xi- \hat{A}_2\xi \Vert =
\Vert P_1A_2 \xi\Vert =
|\langle A_2e_1,\xi\rangle | =
|
\langle( A_2e_1 - \langle A_2e_1,e_1 \rangle e_1), \xi
\rangle
|
\leq C\sqrt{\epsilon},
$$
where
$
C =
\sqrt{
\frac{8\Vert A_2\Vert(1+\Vert A_2\Vert^2)}
     {\rho-4\Vert A_2\Vert \epsilon}
}.
$
For $\zeta=ce_1+\sqrt{1-|c|^2}\xi$ with
$\Vert \xi\Vert =1, \ (I-P_1)\xi=\xi$ the last
relation yields
\begin{equation}\label{distances}
\Vert (A_2-\hat{A}_2)\zeta \Vert \leq |c|
\Vert A_2e_1-\langle A_2e_1,e_1\rangle e_1)\Vert +
\sqrt{1-|c|^2}\Vert A_2\xi-\hat{A}_2\xi\Vert \leq
\sqrt{2}C\sqrt{\epsilon},
\end{equation}
and, therefore,
$$
\Vert A_2-\hat{A}_2\Vert \leq \sqrt{2}C\sqrt{\epsilon}.
$$
This gives us the following estimate for the norm of the
commutant $[A_1,A_2]$:
\begin{eqnarray}
\Vert [A_1,A_2]\Vert \leq \Vert [A_1,(A_2-\hat{A}_2]\Vert +
\Vert [A_1,\hat{A}_2]\Vert \nonumber \\
\leq \sqrt{2}C\sqrt{\epsilon}
\Vert A_1\Vert + \Vert [A_1^{(1)},A_2^{(1)}]\Vert,
\label{commutant1}
\end{eqnarray}
where
$A_1^{(1)}=(I-P_1)A_1(I-P_1), \ A_2^{(1)}=(I-P_1)A_2(I-P_1)$
are the compressions of $A_1$and $A_2$ to the
orthocomplement to $e_1$.

\begin{remark}
If the the point $(\frac{1}{\alpha},0)$ is not a singular
point of the proper joint spectrum of $A_1$ and $A_2$ with 
$\Vert A_2\Vert =|\beta|$, and
$\sigma_p(A_1,A_2)_\rho(\frac{1}{\alpha},0)$ is at less
than $\epsilon$ Hausdorff distance from the line
$\{ \alpha x+\beta y=1\}$, the inequality \eqref{commutant1}
still holds.
This follows from the fact that the pair
$(A_1/\alpha, A_2)$ satisfies the conditions of
Theorem~\ref{almost}.
\end{remark}

Now we will use the relation \eqref{commutant1} to
estimate the norm of the commutant of a pair of
self-adjoint $N\times N$ matrices in terms of the
Hausdorff distance from the joint spectrum to a set
of lines that imitates a joint spectrum of a pair of
commuting matrices. Since our result is stable with
respect to small perturbations, in the next theorem
we will assume that both matrices have simple spectra
and the absolute values of their eigenvalues are different.
Since the commutant  of $A_1$ and $A_2$ is the same as
the commutant of $A_1+\alpha I$ and $A_2+\beta I$ for every
$\alpha, \ \beta$, we may assume that $A_1$ and $A_2$ are
invertible, that is all their eigenvalues are nontrivial.

Let $f(z)$ be analytic in the closed disk
$\overline{\Delta_\rho(a)}=\{ |z-a|\leq \rho \}$ and its
derivative does not vanish there. Then $f$ is locally
univalent in $\overline{\Delta_\rho(a)}$. Write
\begin{eqnarray}
\tilde{\delta}(w)=
\sup \{ r: \ \text{ $f$ is univalent in $\Delta_r(w)$} \},
\nonumber. \\
\delta(f,\rho,a)=
\min \{ \tilde{\delta}(w): \ w\in \Delta_\rho (a)\}.
\label{univalent}
\end{eqnarray}

\begin{remark}
The above definition of $\delta(f,\rho,a)$ is slightly
reminiscent of Bloch's constant $B$, cf. \cite{Mi}, but,
of course, they are very different.
\end{remark}

\begin{theorem}\label{commutant}
Let $A_1,A_2$ be $N\times N$ self-adjoint matrices, and
$\alpha_1,\dots,\alpha_N$ and $\beta_1,\dots,\beta_N$ be the
eigenvalues of $A_1$ and $ A_2$ respectively with
$
|\alpha_1|> |\alpha_2|> \dots > |\alpha_N|>0
$
and 
$
|\beta_1|> |\beta_2|> \dots > |\beta_N|>0.
$
Suppose that $\ell_1,\dots,\ell_N$ is a family of complex
lines of the form
$
\ell_j =
\{ \alpha_{n(j)}x+\beta_jy=1 \}, \ 1\leq j,n(j) \leq N
$
such that
\begin{itemize}
\item[(1)]
Each of the points
$(\frac{1}{\alpha_k},0), \ 1\leq k \leq N$
belongs to one of these lines; 

\item[(2)]
There exist $0<\rho <1$ and $0<\epsilon \ll \rho$
such that conditions (1) and (2) of Theorem~\ref{almost}
are satisfied by $\sigma_p(A_1, A_2)$ and each line
$\{ \alpha_{n(j)} x+\beta_jy=1\}$.
\end{itemize}
Then if $\epsilon $ is small enough, the norm of
the commutant
of $A_1$ and $A_2$ is at most of order $\epsilon^{1/2^N}$.
\end{theorem}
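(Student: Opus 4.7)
The plan is to proceed by induction on $N$, with the base case $N=1$ trivial. For the inductive step, I will extract a common $O(\sqrt{\epsilon})$-eigenvector from $(A_1,A_2)$ via Theorem~\ref{almost}, invoke the reduction inequality~\eqref{commutant1} to pass to a pair of $(N-1)\times(N-1)$ compressed matrices, verify that these compressions satisfy the hypotheses of the theorem with $\epsilon$ replaced by $O(\sqrt{\epsilon})$, and then conclude by the inductive hypothesis.

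In more detail, first I select the index $j_1$ for which $|\beta_{j_1}| = \|A_2\|$, which exists since $\|A_2\|$ equals the largest $|\beta_j|$ and $A_2$ is self-adjoint. The line $\ell_{j_1}$ passes through $(1/\alpha_{n(j_1)},0)$, and the hypothesis together with $\|A_2\|=|\beta_{j_1}|$ puts us exactly in the setting of Theorem~\ref{almost}. This produces a unit eigenvector $e_1$ of $A_1$ (with eigenvalue $\alpha_{n(j_1)}$) that is a $c\sqrt{\epsilon}$-eigenvector of $A_2$ for a constant $c$ depending only on $\rho$ and $\|A_2\|$. The reduction derived in~\eqref{commutant1} then yields
\[
\|[A_1,A_2]\| \leq \sqrt{2}\,C\,\sqrt{\epsilon}\,\|A_1\| + \|[A_1^{(1)}, A_2^{(1)}]\|,
\]
where $A_i^{(1)}=(I-P_1)A_i(I-P_1)$ are the compressions to $e_1^\perp$.

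The main technical step is to show that $(A_1^{(1)},A_2^{(1)})$ satisfies the hypotheses of the theorem with respect to the remaining $N-1$ lines $\{\ell_j : j\neq j_1\}$ and with $\epsilon$ replaced by some $\epsilon'=O(\sqrt{\epsilon})$. Writing $A_2 = A_2' + E$ with $\|E\|=O(\sqrt{\epsilon})$, $A_2'$ self-adjoint, and $e_1$ an exact common eigenvector of $A_1$ and $A_2'$ with eigenvalue $\beta_{j_1}$, one obtains the factorization
\[
\det(xA_1+yA_2'-I) = (\alpha_{n(j_1)}x+\beta_{j_1}y-1)\det(xA_1^{(1)}+y(A_2')^{(1)}-I),
\]
so $\sigma_p(A_1^{(1)},(A_2')^{(1)})$ is obtained from $\sigma_p(A_1,A_2')$ by removing the line $\ell_{j_1}$. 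Spectral continuity applied to $\|A_2-A_2'\|=O(\sqrt{\epsilon})$ then places $\sigma_p(A_1^{(1)},A_2^{(1)})$ within Hausdorff distance $O(\sqrt{\epsilon})$ of $\sigma_p(A_1,A_2)$ minus $\ell_{j_1}$ on each surviving bidisk $\Delta_\rho(1/\alpha_k,0)$, $k\neq n(j_1)$, and the nonvanishing-derivative condition (2) of Theorem~\ref{almost} transfers to the remaining lines by the same perturbation analysis. The inductive hypothesis therefore gives
\[
\|[A_1^{(1)},A_2^{(1)}]\| = O\bigl((\sqrt{\epsilon})^{1/2^{N-1}}\bigr) = O(\epsilon^{1/2^N}).
\]
Combining with the reduction and observing that $\epsilon^{1/2}\le\epsilon^{1/2^N}$ for $\epsilon<1$ and $N\geq 2$ completes the induction. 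The main obstacle I anticipate is uniform control of all the constants, namely the effective bidisk radius $\rho$, the lower bound on the determining-function derivatives, and the constant $C$ from~\eqref{C}, throughout the $N$-fold chain of compressions; this should follow from condition (1) of Theorem~\ref{close to line} and from the strict separations $|\alpha_1|>\cdots>|\alpha_N|$ and $|\beta_1|>\cdots>|\beta_N|$, which guarantee eigenvalue gaps bounded below after each compression.
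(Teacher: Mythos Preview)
Your proposal is correct and follows essentially the same inductive strategy as the paper: extract an almost-eigenvector via Theorem~\ref{almost} (using $|\beta_1|=\|A_2\|$), apply the reduction~\eqref{commutant1}, and verify that the compressed pair $(A_1^{(1)},A_2^{(1)})$ satisfies the hypotheses with $\epsilon$ replaced by $O(\sqrt{\epsilon})$ and $\rho$ by $\rho/2$. The only substantive difference is that where you invoke spectral continuity abstractly, the paper makes the Hausdorff-distance estimate quantitative by writing $\mathcal{P}(x,y)=(\alpha_{n(1)}x+\beta_1y-1)\mathcal{P}_1(x,y)+\mathcal{Q}(x,y)$ with the coefficients of $\mathcal{Q}$ of order $\sqrt{\epsilon}$ and then applying Koebe's $1/4$ theorem to locate zeros of $\mathcal{P}_1$ near those of $\mathcal{P}$.
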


\begin{proof}
By Theorem~\ref{almost} the eigenvector $e_{n(1)}$ of $A_1$
that corresponds to eigenvalues $\alpha_{n(1)}$ is a
$
\sqrt{
\frac{8|\beta_1|(1+|\beta_1|^2)}{\rho-4|\beta_1|\epsilon}
\epsilon
}
$
-eigenvector of $A_2$, and relation \eqref{commutant1}
holds with $P_1$ being replaced with $P_{n(1)}$.  Write
$\epsilon_1=\epsilon$. We want to estimate $\epsilon_2$
such that the compression of $A_1^{(1)}$ and $A_2^{(1)}$ to
$span\{ e_k, \ k=1,\dots,N, \ k\neq n(1) \}$ satisfy
conditions (1) and (2) of the present Theorem with
$\epsilon_2, \ \rho/2$.

It follows from  \eqref{estimate} that in the eigenbasis
$e_1,...,e_N$ of the matrix $A_1$ every entry of the
$n(1)$-th row (and column) of the matrix $A_2$ except for
the one on the main diagonal  has absolute value that
does not exceed $C_1\sqrt{\epsilon_1}$, where
$
C_1 =
\sqrt{
\frac{8|\beta_1|(1+|\beta_1|^2)}{\rho- 4|\beta_1|\epsilon_1}
}.
$
Let
\begin{equation*}
{\mathcal P}(x,y) = \det \left [ xA_1+yA_2-I \right ], \
{\mathcal P}_1(x,y)= \det [ xA_1^{(1)}+yA_2^{(1)}-I ],
\end{equation*}
and let
\begin{equation}\label{minimum_derivative}
d =
\min
\Biggl\{
\biggl|
\alpha_{n(j)}\frac{\partial {\mathcal P}}{\partial x} +
\beta_j\frac{\partial {\mathcal  P}}{\partial y}
\biggr|
: \
\Bigl|
x- \frac{1}{\alpha_{n(j)}}
\Bigr|
\leq \epsilon, \
|y|\leq \epsilon, \ 1\leq j \leq N
\Biggr\} .
\end{equation}

Of course,  the determining polynomials ${\mathcal P}$
and ${\mathcal P} _1$ satisfy
\begin{equation}\label{polynomial_difference1}
{\mathcal P}(x,y)=
(\alpha_{n(1)}x+\beta_1y-1){\mathcal P}_1(x,y)+
{\mathcal Q}(x,y),
\end{equation}
where ${\mathcal Q}$ is a polynomial of degree $N$ whose
coefficients in absolute values do not exceed
$NC_1|\beta_1|^{N-1}\sqrt{\epsilon_1}$. Write
$$
M=\max \{ |1/\alpha_j|+\rho +1 \}.
$$
We obviously have
$$
|\Po (x,y)|\leq (|\alpha_{n(j_1)}+|\beta_1|+1)M|\Po _1(x,y)| +
NC_1|\beta_1|^{N-1}M^N\sqrt{\epsilon_1}.
$$
Now, let
$
(x,y)\in \sigma_p(A_1^{(1)}, A_2^{(1)})\cap
\{ |x-1/\alpha_{n(m)}|\leq \rho/2, \ |y|\leq \rho/2 \}
$
for some $2\leq m\leq N$.  Then
$|\Po(x,y)|\leq NC_1|\beta_1|^{N-1}M^N\sqrt{\epsilon_1}$.
Write $f(t)= \Po (x+t\alpha_{n(m)}, y+t\beta_m).$
Equation~\eqref{minimum_derivative} implies that
\begin{equation}\label{derivative}
|f^\prime (t)|\geq d>0
\end{equation}
for $|t|\leq \rho/2$, and, therefore, $f(t)$ is locally
univalent in the disk $\{ |t| \leq \rho/2\}.$ By
\eqref{univalent} $f$ is univalent  in the disk of radius
$
\tau=\min \{
\delta(F_{x,y}, \rho, 0): \ |x-1/\alpha_m|\leq \rho/2,
                         \ |y|\leq \rho/2
\},
$
where $F_{x,y}(w)=\Po( x +w\alpha_{n(m)}, y+w\beta_m)$,
so that this radius does not depend on the point $(x,y)$.
By \eqref{derivative} \  $|f^\prime (0)|\geq d$, so
Koebe's 1/4 theorem, cf. \cite[p.150]{K}, implies that
if $\epsilon_1$ is small enough so that
$
NC_1|\beta_1|^{N-1}M^N\sqrt{\epsilon_1}<\frac{d\tau}{4},
$
the function $f$ has a zero in
$\{ |t|\leq 4   NC_1|\beta_1|^{N-1}M^N\sqrt{\epsilon_1} \}$, and,
hence, the distance from
\[
\sigma_p(A_1^{(1)}, A_2^{(1)})\cap
\{ |x-1/\alpha_{n(m)}|\leq \rho/2, \ |y|\leq \rho/2 \}
\]
to $\sigma(A_1,A_2)$ does not exceed
$4NC_1|\beta_1|^{N-1}M^N\sqrt{\epsilon_1}$, and, therefore,
the distance from
\[
\sigma_p(A_1^{(1)}, A_2^{(1)})\cap
\{ |x-1/\alpha_m|\leq \rho/2, \ |y|\leq \rho/2 \}
\]
to the line $\{ \alpha_{n(m)} x+\beta_my=1\}$ does not exceed
$\epsilon_2=5NC_1|\beta_1|^{N-1}M^N\sqrt{\epsilon_1}$.

The fact that the eigenvalues of $A_2^{(1)}$ differ from
those of $A_2$ by a magnitude of order $\sqrt{\epsilon_1}$
follows directly from (\ref{distances}) and the fact that
for two compact normal operators the distance between their
spectra does not exceed the distance between them in the
operator norm topology, cf. \cite[Proposition 1]{GS}.

Fanally,  it follows from \eqref{polynomial_difference1}
that the difference between
$
\alpha_{n(k)}\frac{\partial \Po_1}{\partial x} +
\beta_k\frac{\partial \Po_1}{\partial y},
$
\  $k\neq 1$  and
$
\alpha_{n(k)}\frac{\partial \Po}{\partial x} +
\beta_k\frac{\partial \Po}{\partial y}
$
is of order of $\epsilon_2$, and, therefore, the
$(N-1)\times (N-1)$-dimensional matrices $A_1^{(1)}$ and
$A_2^{(1)}$ satisfy the conditions of this theorem with
$\epsilon_2$ and $\rho/2$.
Continuing inductively we arrive at the claimed estimate.
\end{proof}

\section{Concluding remarks}
\label{S:concluding-remarks}

Here we want to state and discuss four problems
immediately related to the material of the preceding
sections. The first one was essentially stated in the
Introduction.
It was mentioned there that, according to \cite{SYZ},  if
$A_1,\dots, A_n$ are compact, then on every compact subset of
$\C^n$ the joint spectrum $\sigma_p(A_1,\dots, A_n)$ has a
global defining function. Our first problem is:


\begin{problem}
Describe those globally defined in $\C^n$ analytic sets
that have spectral representation. In particular, which
real globally defined analytic sets (that is, zeros of
entire functions with real Taylor coefficients) have
self-adjoint spectral representation?
\end{problem}

In the case of matrices the similar problem for general matrices was solved 
by Dickson \cite{D} and for self-adjoint matrices by Helton and Vinnikov 
\cite{HV}. 

\vspace{.2cm}


The second problem is related to the example in Section~6
and Theorem~B. It was required in Theorem \ref{circle} that
$\Vert A_2\Vert=1$ and this condition substantially
differentiates this result from Theorem~B. We
wonder whether this condition is redundant and the result
holds without it. More generally, it is natural to ask
whether a complete analog of Theorem~B for algebraic curves
of order higher than $1$ is valid. Thus, we are compelled
to pose the following problem.



\begin{problem}
Suppose that $A$ and $B$ are self-adjoint, \ $A$ is
invertible and $\sigma_p(A,B)$ contains a real
algebraic curve $\Gamma$ of order $k$ that meets the
$x$- and $y$-axes at points
$(\alpha_1,0),\dots, (\alpha_k,0)$
and $(\beta_1,0),\dots, (\beta_k,0)$ respectively.
Also suppose
that all these points of intersection of $\Gamma$ with the
coordinate axes are isolated spectral points of the
corresponding operators. If $\sigma_p(A^{-1},B)$
contains an algebraic curve $\Gamma_1$ of the same order
$k$ that meets the coordinate axes at points
$(1/\alpha_1,0),\dots,(1/\alpha_k,0)$ and
$(\beta_1,0),\dots,(\beta_k,0)$ respectively, does this imply
that $A$ and $B$ have a common $k$-dimensional reducing
subspace?
\end{problem}


We saw that the appearance of circular arcs in the joint
spectrum of $A$ and $B$ under the assumptions of
Theorem~\ref{circle} implies that the group generated
by $A$ and $B$ ``contains a representation'' of
Coxeter's group $BC_2$ in the sense that the the pair
$(A,B)$ is decomposable having a finite dimensional
reducing subspace and the restriction of the group to
this subspace represents $BC_2$. It would be interesting
to find out whether there are other curves or surfaces
in joint spectra of operator tuples that are associated
with other symmetry groups. For instance,
Theorem~\ref{line_in_spectrum} shows that this is true
for lines appearing in joint spectra of $A$ and $B$,
when the corresponding lines appear in
$\sigma_p(A^{-1}, B)$. In this case the represented
group is a product of copies of the Coxeter group $A_1$.



\begin{problem}
Which curves (or surfaces) appearing in the joint spectrum
of a pair of operators (tuple of operators) indicate that
the group generated by this pair (tuple)
"contains a representation" of a corresponding symmetry
group?
\end{problem}


The last problem we would like to mention is related to
the norm estimate of the commutant of two matrices, or
more generally two compact operators.
The estimate given by Theorem~\ref{commutant} seems to be
rather rough. Besides, this estimate does not allow any
extension of the result of Theorem~\ref{commutant} to an
infinite dimensional case. One possible way of improving
the estimate is to consider that the proper spectrum is
close to a set of lines not locally, but on a big compact
subset of $\C^2$. Alternatively, we might impose the
condition that the joint projective spectrum in 
$\C\mathbb P^2$
is close to s set of projective lines  in Fubini study
metric. This leads us to the following problem.



\begin{problem}
Let $A$ and $B$ be self-adjoint compact operators
acting on a separable Hilbert space $H$. Suppose that
the distance  from
$\sigma(A,B,I)$ to a set of projective lines that
contains $\{ [x:y:0] \}$ and satisfies the following
condition: 
\begin{itemize}
\item[(C)] 
the intersection of this set of lines with the lines 
$\{ [0:y:z]\}$ and $\{ [x:0:z]\}$ coincides with the
inverse spectra
$\sigma(A)^{-1}$ and
$\sigma(B)^{-1}$,
counting multiplicities;  
\end{itemize}
does not exceed $\epsilon$ 
in the  Fubini study metric.
Estimate the norm of the commutant $[A,B]$ in terms
of $\epsilon$.
\end{problem}

\end{document}